\newcommand{\up}{uncertainty principle}
\newcommand{\tfa}{time-frequency analysis}
\newcommand{\tf}{time-frequency}
\newcommand{\fif}{if and only if}
\newcommand{\modsp}{modulation space}
\newcommand{\psdo}{pseudodifferential operator}
\newtheorem{tm}{Theorem}[section]    
\newtheorem{lemma}[tm]{Lemma}
\newtheorem{prop}[tm]{Proposition}
\newtheorem{cor}[tm]{Corollary}
\newtheorem{definition}[tm]{Definition}
\numberwithin{equation}{section} 
\theoremstyle{definition}
\newtheorem{rem}{Remark}[section]
\newcommand{\beqa}{\begin{eqnarray*}}
\newcommand{\eeqa}{\end{eqnarray*}}
\DeclareMathOperator*{\supp}{supp}
\newcommand{\field}[1]{\mathbb{#1}}
\newcommand{\bR}{\field{R}}        
\newcommand{\bZ}{\field{Z}}        
\newcommand{\bC}{\field{C}}        
\def\cS{\mathcal{ S}}
\def\cB{\mathcal{ B}}
\def\cG{\mathcal{ G}}
\def\cA{\mathcal{ A}}
\def\cC{\mathcal{ C}}
\def\rd{\bR^d}
\def\rdd{{\bR^{2d}}}
\def\zdd{{\bZ^{2d}}}
\def\lrd{L^2(\rd)}
\def\zd{\bZ^d}
\def\intrd{\int_{\rd}}
\def\<{\left<}
\def\>{\right>}
\def\inv{^{-1}}
\def\mv1{M_v^1}
\newcommand{\abs}[1]{\lvert#1\rvert}
\newcommand{\norm}[1]{\lVert#1\rVert}
\newcommand{\ep}{\varepsilon}
\newcommand{\cdo}{\, \cdot \,}
\newcommand{\Rdst}{{\mathbb{R}^d}}
\newcommand{\s}{\mathcal{S}(\Rdst)}
\newcommand{\Rtdst}{{\mathbb{R}^{2d}}}
\newcommand{\Zdst}{{\mathbb{Z}^d}}
\newcommand{\Ztdst}{{\mathbb{Z}^{2d}}}
\newcommand{\menv}{\triangle}
\newcommand{\rel}{\mathop{\mathrm{rel}}}
\newcommand{\nm}[2]{\Vert #1\Vert _{#2}}
\newcommand{\set}[2]{\big\{ \, #1 \, :  \, #2 \, \big\}}
\def\supp{\operatorname{supp}}
\newcommand{\bignorm}[1]{\bigl\lVert#1\bigr\rVert}
\newcommand{\scal}[2]{\langle #1,#2\rangle}
\newcommand{\Schur}{\operatorname{Schur}}
\begin{document}
\begin{abstract}
We extend the stability and spectral invariance of convolution-dominated matrices to
the case of quasi-Banach algebras $p<1$. As an application we
construct a spectrally invariant quasi-Banach algebra of
pseudodifferential operators with non-smooth symbols that generalize
Sj\"ostrand's results.   
\end{abstract}

\title[Spectral Invariance of Quasi-Banach Algebras of Matrices and
PDOs.]{Spectral Invariance of Quasi-Banach Algebras of Matrices and
  Pseudodifferential Operators} 
\author{Karlheinz Gr\"ochenig}
\address{Faculty of Mathematics \\
University of Vienna \\
Oskar-Morgenstern-Platz 1 \\
A-1090 Vienna, Austria}
\email{karlheinz.groechenig@univie.ac.at}
\subjclass[2010]{}
\author{Christine Pfeuffer}
\address{Faculty of Mathematics\\
Martin-Luther-Universit\"at Halle-Wittenberg\\
06099 Halle (Saale), Germany}
\email{christine.pfeuffer@mathematik.uni-halle.de}
\author{Joachim Toft}
\address{Faculty of Mathematics\\
Lineaus University\\
Universitetsplatsen 1\\
35195 Växjö, Sweden
}
\email{joachim.toft@lnu.se}
\date{}
\subjclass[2010]{35P05, 35S05, 47G30}
\keywords{pseudodifferential operators, spectral invariance,
  modulation space, Wiener's lemma, off-diagonal decay matrices, Gabor
frame}
\maketitle

\section{Introduction}

Spectral invariance is an important phenomenon 
for applications in the field of partial differential equations and  in the theory of \psdo
s. The first result due
to Beals~\cite{beals77} asserts that the inverse of a \psdo  \ $T$ that is (i) invertible on $\lrd
$ and (ii) with a symbol in the  H\"ormander class $S^0_{0,0}$  is again a \psdo\ with
a symbol in the same class. In other words, this class of \psdo s is
inverse-closed (closed under inversion)
in the algebra  $\cA $ of
\psdo s with $S^0_{0,0}$-symbols.
 As a consequence
the spectrum of $T$ 
is independent of 
the weighted $L^p(\mathbb{R}^d)$ space or of the choice of $B^{s,a}_{p,q}(\mathbb{R}^d)$, see \cite{LeopoldTriebel, Schrohe1990}. 
 This phenomenon is often referred to as  spectral
invariance, and the resemblance to Wiener's lemma for absolutely
convergent Fourier series has also motivated the terminology of $\cA $
being a Wiener algebra.

The next important step in the theory of spectral invariance of \psdo
s was made by Sj\"ostrand~\cite{Sjo95} who introduced a class of non-smooth
symbols for which the associated algebra of \psdo s is spectrally
invariant in $\cB (\lrd )$. This class, nowadays called the
Sj\"ostrand class, turned out to be an already  known function space 
that is paramount in \tfa , namely the \modsp\ $M^{\infty , 1} (\rdd
)$. This connection spawned an intensive investigation of \psdo s with
\tf\ methods ~\cite{Gro06,GR08,GS07,GrHe99,FeGaTo2014,Sun2007,Sun2011,To01,To17,Sjo95,Te06}. Among the new
results obtained by \tf\ methods was, firstly,  a characterization of the
H\"ormander class and the Sj\"ostrand class by means of the matrix
associated to a \psdo\ with respect to a Gabor frame. Secondly, \tfa\
established a firm connection between the off-diagonal decay of these
matrices and the corresponding properties (boundedness, algebra,
inverse-closedness) of \psdo s. In this way,  every (solid) inverse-closed
subalgebra of $\cB (\ell ^2(\zdd ))$  can be mapped to an algebra of
\psdo s that is inverse-closed in $\cB (\lrd )$~\cite{GR08}. In
contrast to the classical hard analysis methods the \tf\ approach is
so flexible that the theory can even be formulated for \psdo s on
locally compact abelian groups~\cite{GS07}. 

The goal of this  paper is the extension of the theory of  spectrally
invariant algebras  of pseudodifferential operators to the realm of
quasi-Banach algebras. Quasi-Banach algebras are interesting in their
own right, but quasi-Banach spaces and associated operators occur naturally in approximation theory and data 
compression problems, see e.g. \cite{devoretemlyakov96,GS20}.
Additionally   in \tfa\ they occur in the
formulation of \up s~\cite{GaGr02}.

To formulate our main results, we briefly recall the definition of
\modsp s and the Weyl form of \psdo s.
For a fixed non-zero Schwartz function  $g \in \s$ and a
tempered distribution $f$ the
\textit{short-time Fourier transform} $V_{g} f$ is the function on
$\mathbb{R}^{2d}$ defined by the formula
\begin{align}\label{def:STFTIntro}
	V_{g}f(x,\xi) := \int_{\mathbb{R}^d} f(t) \overline{ g
  (t -x)} e^{-2\pi i t \cdot \xi } dt \, 
\end{align}
(with suitable interpretation of the integral). For 
 $0<p,q \leq \infty$ the  (unweighted)  \emph{modulation space}
 $M^{p,q}(\rd)$ is defined by the quasi-norm 
\begin{align}
	\|f\|_{M^{p,q}}:=
	\left( \int_{\rd} \left( \int_{\rd} |V_g f(x, \xi )|^p\,  dx 
	\right)^{q/p} d\xi \right)^{1/q}
\label{Eq:ModNorm}
\end{align}
(with usual modifications in case $p=\infty$ or $q=\infty$)
and consists of all tempered distributions $f$ with finite
quasi-norm. 
Modulation spaces on $\rdd $ serve as symbol classes for \psdo s. 
Our focus will be on the symbol class $M^{\infty ,p_0}(\rdd )$
for $p_0<1$. This is only  a quasi-Banach space.

Given a symbol $\mathfrak{a}$ on $\rdd $, the corresponding
\psdo\ in the Weyl calculus is defined formally as
$$
\mathfrak{a}^w f(x) = \intrd \intrd
\mathfrak{a}\big(\frac{x+y}{2},  \xi) f(y)
	e^{2\pi i(x-y)\cdot \xi}\, dy d\xi \, ,
$$
(again with a suitable interpretation of the integral and  for $f\in
\cS (\rd )$).

With these definitions our main results can be stated as follows. 
\begin{tm}[Spectral invariance]
\label{tm:SpectralInvarianceNew}
Let $p_0\in (0,1]$ and $\mathfrak{a} \in M^{\infty,p_0 }(\rdd )$
be such that $\mathfrak{a}   ^w$ is invertible on
$M^p(\rd)$ for some $p \in [p_0,\infty]$.
Then $(\mathfrak{a} ^w)\inv =\mathfrak{b} ^w $ for some
$\mathfrak{b} \in M^{\infty,p_0 }(\rdd )$. 
\end{tm}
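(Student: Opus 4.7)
The plan is to reduce the operator-level statement to a matrix-level statement via a Gabor frame and then invoke the quasi-Banach version of Wiener's lemma for convolution-dominated matrices, which is the abstract result highlighted in the abstract of the paper and should be established in an earlier section. I would first fix a tight \gf\ $\{\pi(\lambda)g : \lambda\in\Lambda\}$ for $\lrd$ with window $g\in\mathcal{S}(\rd)$ and a lattice $\Lambda\subset\rdd$ dense enough that the standard \modsp\ equivalences survive in the range $p<1$. The analysis operator $C_g f = (\langle f,\pi(\lambda)g\rangle)_{\lambda}$ then realizes a quasi-isomorphism between $M^p(\rd)$ and a closed subspace of $\ell^p(\Lambda)$ for every $p\in(0,\infty]$, and an analogous discretization handles the symbol side on $\rdd$.

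The bridge between symbols and matrices is the Gabor matrix $M(\mathfrak{a})_{\lambda,\mu} := \langle \mathfrak{a}^w \pi(\mu)g,\pi(\lambda)g\rangle$. A short computation with the STFT shows that
$$|M(\mathfrak{a})_{\lambda,\mu}| \le H(\lambda-\mu), \qquad H\in\ell^{p_0}(\Lambda),$$
if and only if $\mathfrak{a}\in M^{\infty,p_0}(\rdd)$; equivalently, $M(\mathfrak{a})$ lies in the convolution-dominated quasi-Banach algebra $\mathcal{C}^{p_0}(\Lambda)$. Since $C_g$ intertwines $\mathfrak{a}^w$ on $M^p$ with the action of $M(\mathfrak{a})$ on $\ell^p$, up to the frame operator, which itself lies in $\mathcal{C}^{p_0}$ and is invertible there, the invertibility hypothesis on $\mathfrak{a}^w$ translates to invertibility of $M(\mathfrak{a})$ on $\ell^p$. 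Applying the quasi-Banach \wl\ for $\mathcal{C}^{p_0}$ yields $M(\mathfrak{a})^{-1}\in\mathcal{C}^{p_0}(\Lambda)$, and Gabor synthesis from this inverse matrix produces a symbol $\mathfrak{b}\in M^{\infty,p_0}$ with $\mathfrak{b}^w=(\mathfrak{a}^w)^{-1}$. Automatic boundedness of $\mathfrak{b}^w$ on every $M^q(\rd)$, $q\in[p_0,\infty]$, comes for free from $\mathfrak{b}\in M^{\infty,p_0}$, so the conclusion does not depend on the particular $p$ appearing in the hypothesis.

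The genuinely hard step, which I expect to be the main obstacle, is the matrix-level quasi-Banach \wl. The classical Banach-algebra techniques -- holomorphic functional calculus, duality, Hahn-Banach separation -- fail for $p_0<1$ because $\mathcal{C}^{p_0}$ is only a quasi-Banach algebra. They must be replaced by arguments exploiting the $p_0$-homogeneous Aoki--Rolewicz inequality $\|A+B\|^{p_0}\le\|A\|^{p_0}+\|B\|^{p_0}$, combined with a Hulanicki-style iterative inversion scheme or a quasi-Banach variant of Jaffard's lemma for resolvents of off-diagonal-decaying matrices. A secondary technical point is that Gabor synthesis in the range $p<1$ requires a canonical dual window in a sufficiently small \modsp; this is the reason for taking $g\in\mathcal{S}(\rd)$ with a dense enough lattice, which guarantees that the tight frame operator itself lies in $\mathcal{C}^{p_0}$ and is inverse-closed by the same lemma.
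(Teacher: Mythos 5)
Your overall strategy (Gabor discretization, the characterization $\mathfrak{a}\in M^{\infty,p_0}\Leftrightarrow M(\mathfrak{a})\in\cC^{p_0}$, and the quasi-Banach matrix Wiener lemma) is the same as the paper's, but there is a genuine gap at the pivotal reduction step. You claim that, since $C_g$ intertwines $\mathfrak{a}^w$ with $M(\mathfrak{a})$ ``up to the frame operator'', invertibility of $\mathfrak{a}^w$ on $M^p(\rd)$ translates into invertibility of $M(\mathfrak{a})$ on $\ell^p(\Lambda)$, after which the matrix Wiener lemma applies. This is false: a Gabor frame is redundant, so $\operatorname{ran}C_g=P\ell^p(\Lambda)$ is a \emph{proper} closed subspace of $\ell^p(\Lambda)$, where $P_{\lambda\mu}=\langle\pi(\mu)g,\pi(\lambda)g\rangle$ is the Gram projection, and $M(\mathfrak{a})=M(\mathfrak{a})P$ annihilates $\ker P\neq\{0\}$. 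Hence $M(\mathfrak{a})$ is never invertible on all of $\ell^p(\Lambda)$; the hypothesis only gives invertibility of $M(\mathfrak{a})$ on the subspace $P\ell^p(\Lambda)$ (this is Lemma \ref{lemma:equivalenceInvertibility}), whereas the matrix spectral-invariance theorem (Theorem \ref{tm:SpectralInvariance}) presupposes invertibility from $\ell^p(\Pi)$ to $\ell^p(\Lambda)$ and so cannot be applied to $M(\mathfrak{a})$ directly. In the Banach case this mismatch is repaired with the pseudo-inverse and holomorphic functional calculus, but exactly those tools are unavailable for $p_0<1$ — the same obstruction you yourself point out at the matrix level — so the ``up to the frame operator'' step cannot be waved through (for a tight frame the frame operator is the identity anyway; the obstruction is solely the redundancy $\ker P\neq\{0\}$, not lattice density or invertibility of the frame operator in $\cC^{p_0}$).

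The paper circumvents this with a specific trick you would need: replace $M(\mathfrak{a})$ by $A=M(\mathfrak{a})+\mathrm{I}-P$. Using $PM(\mathfrak{a})=M(\mathfrak{a})P=M(\mathfrak{a})$ and Lemma \ref{lemma:equivalenceInvertibility}, invertibility of $\mathfrak{a}^w$ on $M^p$ makes $A$ invertible on the full space $\ell^p(\Lambda)$, and $A\in\cC^{p_0}(\Lambda)$ because $P\in\cC^{p_0}$ for $g\in\cS(\rd)$ (Lemma \ref{lemma:PropertiesProjection}). The matrix theorems are then applied to $A$: Theorem \ref{Thm:SpectralInvarianceMatrix_new} gives invertibility on all $\ell^q$, $q\in[p_0,\infty)$, which yields invertibility of $\mathfrak{a}^w$ on $L^2$ (Theorem \ref{tm:SpectralInvarianceModSpaces}); the Banach-case result of \cite{Gro06} then supplies a symbol $\mathfrak{b}\in M^{\infty,1}(\rdd)$ with $\mathfrak{b}^w=(\mathfrak{a}^w)^{-1}$; one verifies $M(\mathfrak{b})M(\mathfrak{a})=P$, hence $A^{-1}=M(\mathfrak{b})+\mathrm{I}-P$, and Theorem \ref{tm:SpectralInvariance} together with Corollary \ref{cor:PropertyMa} upgrades $\mathfrak{b}$ to $M^{\infty,p_0}(\rdd)$. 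Your sketch contains neither this (or any) device for the non-surjectivity of $C_g$ nor an argument producing the symbol $\mathfrak{b}$ from the inverse matrix in the quasi-Banach range, so as written the proof does not go through.
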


As a consequence we obtain that the spectrum of a \psdo{}  with a
symbol in $M^{\infty ,p_0}(\rdd )$ is independent of the space on
which it acts. 
\begin{tm}[Spectral invariance on Modulation Spaces] \label{tm:SpectralInvarianceModSpaces}
	If  $\mathfrak{a}  \in M^{\infty,p_0 }(\rdd )$ for $p_0\in
        (0,1]$ and  $\mathfrak{a}    ^w$ is invertible on $M^p(\rd)$
        for some $p \in [p_0,\infty]$, then $\mathfrak{a} ^w$ is also
        invertible on  $M^{q}(\rd)$ for all $q \in [p_0,\infty)$. 
\end{tm}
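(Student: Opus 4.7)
The plan is to deduce Theorem~\ref{tm:SpectralInvarianceModSpaces} as an almost immediate consequence of Theorem~\ref{tm:SpectralInvarianceNew}. By hypothesis, $\mathfrak{a}\in M^{\infty,p_0}(\mathbb{R}^{2d})$ and $\mathfrak{a}^w$ is invertible on some $M^p(\mathbb{R}^d)$. Theorem~\ref{tm:SpectralInvarianceNew} already hands me a symbol $\mathfrak{b}\in M^{\infty,p_0}(\mathbb{R}^{2d})$ with $\mathfrak{b}^w=(\mathfrak{a}^w)^{-1}$ on $M^p$. The only task left is to transfer this inversion identity from $M^p$ to every $M^q(\mathbb{R}^d)$ with $q\in[p_0,\infty)$.

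The first step is to invoke the quasi-Banach boundedness theorem for Weyl operators with symbols in $M^{\infty,p_0}$: such a symbol yields a continuous operator $M^q(\mathbb{R}^d)\to M^q(\mathbb{R}^d)$ for every $q\in[p_0,\infty]$. This is the natural quasi-Banach analogue of Sj\"ostrand's boundedness theorem and is presumably one of the auxiliary results established earlier in the paper, via the Gabor frame / convolution-dominated matrix machinery alluded to in the introduction. Applying it to both $\mathfrak{a}$ and $\mathfrak{b}$ gives bounded operators $\mathfrak{a}^w,\mathfrak{b}^w:M^q(\mathbb{R}^d)\to M^q(\mathbb{R}^d)$ for every relevant $q$.

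The second step is to extend the identities $\mathfrak{a}^w\mathfrak{b}^w=\mathfrak{b}^w\mathfrak{a}^w=\mathrm{Id}$ from $M^p$ to $M^q$. The Schwartz space $\mathcal{S}(\mathbb{R}^d)$ is continuously contained in both $M^p$ and $M^q$, and on $\mathcal{S}$ the compositions are well defined in the classical Weyl-calculus sense, so both identities hold pointwise on $\mathcal{S}$. For $q\in[p_0,\infty)$ the Schwartz functions are dense in $M^q(\mathbb{R}^d)$, so boundedness of the compositions on $M^q$ allows us to extend the identities to all of $M^q$. This shows that $\mathfrak{a}^w$ is invertible on $M^q(\mathbb{R}^d)$ with inverse $\mathfrak{b}^w$.

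The main obstacle I foresee is ensuring that the Weyl composition within the quasi-Banach symbol class $M^{\infty,p_0}$ is compatible with the $M^q$-extensions of $\mathfrak{a}^w$ and $\mathfrak{b}^w$, so that the Schwartz-level identity really does extend. This is a symbolic-calculus issue that ultimately rests on the algebraic properties of twisted convolution at the level of Gabor coefficients; once those are in place the argument is routine. Observe also that the exclusion of $q=\infty$ is intrinsic to this density argument: $\mathcal{S}(\mathbb{R}^d)$ is not norm-dense in $M^\infty(\mathbb{R}^d)$, and a substitute via weak-$\ast$ approximation through the predual $M^1$ would require additional hypotheses beyond those of the theorem.
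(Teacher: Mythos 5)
Your argument is circular within this paper's logical structure. You deduce Theorem~\ref{tm:SpectralInvarianceModSpaces} from Theorem~\ref{tm:SpectralInvarianceNew}, but the paper's proof of Theorem~\ref{tm:SpectralInvarianceNew} \emph{begins} by invoking Theorem~\ref{tm:SpectralInvarianceModSpaces}: it first uses that theorem to obtain invertibility of $\mathfrak{a}^w$ on $M^2(\rd)=L^2(\rd)$, then applies the Banach-case result of Gr\"ochenig (via the embedding $M^{\infty,p_0}(\rdd)\subseteq M^{\infty,1}(\rdd)$) to produce a symbol $\mathfrak{b}\in M^{\infty,1}(\rdd)$ with $\mathfrak{b}^w=(\mathfrak{a}^w)^{-1}$, and only afterwards upgrades $\mathfrak{b}$ to $M^{\infty,p_0}(\rdd)$ through the matrix identity $B=M(\mathfrak{b})+\mathrm{Id}-P=A^{-1}$ and Theorem~\ref{tm:SpectralInvariance}. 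The passage to $L^2$ is essential there precisely because no quasi-Banach analogue of the ``inverse is again a Weyl operator'' statement is available at that stage of the argument; so taking Theorem~\ref{tm:SpectralInvarianceNew} as an independent input is not legitimate. (Your remaining steps are fine: boundedness of $\mathfrak{a}^w,\mathfrak{b}^w$ on $M^q$ is Proposition~\ref{Prop:EmbeddingsPDO2}, and the extension of $\mathfrak{a}^w\mathfrak{b}^w=\mathfrak{b}^w\mathfrak{a}^w=\mathrm{Id}$ by density of $\cS(\rd)$ in $M^q(\rd)$, $q<\infty$, is exactly how the paper proves the \emph{later} Theorem~\ref{thm:SprectralInvariance} from Theorem~\ref{tm:SpectralInvarianceNew}. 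In effect you have reproduced the proof of Theorem~\ref{thm:SprectralInvariance}, not of Theorem~\ref{tm:SpectralInvarianceModSpaces}.)

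The paper's actual proof of Theorem~\ref{tm:SpectralInvarianceModSpaces} bypasses the symbol of the inverse entirely and works at the matrix level: with a tight Gabor frame one forms $A=M(\mathfrak{a})+\mathrm{Id}-P$, shows via Lemmas~\ref{lemma:PropertiesProjection} and \ref{lemma:equivalenceInvertibility} that invertibility of $\mathfrak{a}^w$ on $M^p(\rd)$ forces invertibility of $A$ on $\ell^p(\Lambda)$, applies the spectral invariance of convolution-dominated matrices (Theorem~\ref{Thm:SpectralInvarianceMatrix_new}) to get invertibility of $A$ on $\ell^q(\Lambda)$ for all $q\in[p_0,\infty)$, deduces that $M(\mathfrak{a})$ is invertible on $P\ell^q(\Lambda)$, and converts back with Lemma~\ref{lemma:equivalenceInvertibility}. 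To repair your proposal you would either have to follow that route, or give an independent proof of Theorem~\ref{tm:SpectralInvarianceNew} that does not pass through invertibility on $L^2$ (for instance by showing directly that $A^{-1}\in\cC^{p_0}$ and that the Gabor matrix of $(\mathfrak{a}^w)^{-1}$ inherits the $\ell^{p_0}$ envelope, then invoking Corollary~\ref{cor:PropertyMa}); as it stands, the claimed shortcut assumes what is to be proved.
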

More generally, we show in Theorem \ref{thm:SprectralInvariance} the invertibility of $\mathfrak{a} ^w$ on the more broad class of modulation spaces $M^{r,q}(\rd)$ with $r, q \in [p_0,\infty)$ under the assumptions of the previous theorem.\\

\textbf{Methods.} We follow the proof outline of \cite{GR08}. The
first step  is to study the matrix representation of a \psdo\ with respect
to a Gabor frame and then derive a characterization of the symbol
class in terms of the off-diagonal decay of the associated
matrix.

In the second step  this leads to the study of spectrally invariant matrix
algebras. The appropriate class in our context are
convolution-dominated matrices, i.e., matrices $A=(a_{\lambda ,\rho
  })_{\lambda,\rho \in \Lambda }$ with an off-diagonal
  decay of the  form
$$
|a_{\lambda ,\rho }| \leq H(\lambda -\rho )
$$
for a (smooth) function $H$ in $L^p(\rd)$. It turns out that such  matrix
classes are spectrally invariant in $\cB (\ell ^p(\Lambda ))$. 
To offer a glimpse of this aspect, we formulate a very special case of
our main result on matrices that does not require technical details.

\begin{tm}
  Let $A=(a_{kl})_{k,l\in \zd } $ be a matrix over the index set $\zd
  $. Assume that there exists a sequence $h\in \ell ^{p_0}(\zd )$ for
  $0<p_0\leq 1$, such that
  $$
  |a_{kl}| \leq h(k-l) \qquad \text{ for all } k,l \in \zd \, .
  $$

  (i) Spectral invariance:  If  $A$ is invertible on some $\ell ^p(\zd )$ for $p\in
  [p_0,\infty ]$, then $A$ is invertible on all $\ell ^q (\zd
  )$ for $q\in   [p_0,\infty ]$.

  (ii) Spectral Stability: If for some $\ell ^p(\zd )$ with $p\in
  [p_0,\infty ]$, $A$ satisfies the stability condition
  $$
  \|Ac\|_p \geq C \|c\|_p \text{ for all } c\in \ell ^p(\zd )\, ,
  $$
  then $A$ satisfies $  \|Ac\|_q \geq C_q \|c\|_q \text{ for all } c\in \ell ^q(\zd )$ with $q\in [p_0,\infty ]$. 
\end{tm}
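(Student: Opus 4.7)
The strategy is to introduce the class
$$\mathcal{C}_{p_0}:=\{A=(a_{kl})_{k,l\in\mathbb{Z}^d}\,:\,\exists\,h\in\ell^{p_0}(\mathbb{Z}^d)\text{ with }|a_{kl}|\leq h(k-l)\}$$
and to show that it is a quasi-Banach $*$-algebra which is inverse-closed in $\mathcal{B}(\ell^q(\mathbb{Z}^d))$ for every $q\in[p_0,\infty]$. With this master statement in hand, both (i) and (ii) follow by short arguments.

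I would equip $\mathcal{C}_{p_0}$ with the quasi-norm $\|A\|_{\mathcal{C}_{p_0}}:=\|H_A\|_{p_0}$, where $H_A(k):=\sup_{l\in\mathbb{Z}^d}|a_{l,l-k}|$. The $p_0$-subadditivity $|s+t|^{p_0}\leq|s|^{p_0}+|t|^{p_0}$ (valid for $0<p_0\leq 1$) yields the crucial convolution estimate
$$\|h_1*h_2\|_{p_0}^{p_0}\leq\|h_1\|_{p_0}^{p_0}\,\|h_2\|_{p_0}^{p_0},$$
so that $\ell^{p_0}(\mathbb{Z}^d)$ is a convolution quasi-Banach algebra. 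Combined with the pointwise bound $|(AB)_{kl}|\leq(H_A*H_B)(k-l)$, this realises $\mathcal{C}_{p_0}$ as a quasi-Banach $*$-algebra with submultiplicative quasi-norm. The embedding $\ell^{p_0}\hookrightarrow\ell^1$ together with a quasi-Banach version of Young's inequality then shows that every $A\in\mathcal{C}_{p_0}$ acts boundedly on $\ell^q(\mathbb{Z}^d)$ for all $q\in[p_0,\infty]$.

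The heart of the proof is the inverse-closedness, which I expect to be the main obstacle. The plan is to adapt the Hulanicki/Bochner--Phillips symmetry argument to the quasi-Banach setting: verify that $\mathcal{C}_{p_0}$ is a symmetric $*$-algebra, i.e., $\sigma_{\mathcal{C}_{p_0}}(A^*A)\subset[0,\infty)$ for every $A\in\mathcal{C}_{p_0}$, and then deduce $\sigma_{\mathcal{C}_{p_0}}(A)=\sigma_{\mathcal{B}(\ell^2)}(A)$. The passage from $\ell^2$ to a general $\ell^q$ is handled by expressing $A^{-1}$ as a Neumann series around a scalar shift and exploiting the submultiplicativity of $\|\cdot\|_{\mathcal{C}_{p_0}}$, or equivalently by a Jaffard/Combes--Thomas type resolvent estimate in the quasi-Banach category. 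The technical subtlety is that the usual triangle inequality is replaced by the $p_0$-power inequality, so the Banach-space derivation of Hulanicki's lemma needs to be re-examined term by term with the $p_0$-geometry in mind; constants that were harmless before may now fail to be summable.

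Once inverse-closedness is in hand, (i) is immediate: $A^{-1}\in\mathcal{C}_{p_0}$ is automatically bounded on every $\ell^q$ with $q\in[p_0,\infty]$. For (ii), the plan is to reduce the stability of $A$ on $\ell^p$ to the invertibility of $T:=A^*A\in\mathcal{C}_{p_0}$ on $\ell^2$; the latter is exactly the bound-below property of $A$ on $\ell^2$, which one obtains from bound-below on $\ell^p$ via an auxiliary lemma combining a finite-support approximation with the uniform action of $\mathcal{C}_{p_0}$ on all $\ell^q$. From inverse-closedness we then get $T^{-1}\in\mathcal{C}_{p_0}$, so that $B:=T^{-1}A^*\in\mathcal{C}_{p_0}$ is a left inverse of $A$ that is bounded on every $\ell^q$, yielding
$$\|c\|_q=\|BAc\|_q\leq\|B\|_{\mathrm{op},\ell^q}\,\|Ac\|_q,$$
as required.
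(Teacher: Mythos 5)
There is a genuine gap, and it sits exactly where the paper locates the main difficulty. Your plan establishes (plausibly) that $\mathcal{C}_{p_0}$ is a quasi-Banach $*$-algebra acting boundedly on every $\ell^q(\zd)$, and then hinges on Hulanicki/Bochner--Phillips symmetry to get inverse-closedness in $\cB(\ell^2)$. But the hypothesis of the theorem is invertibility (resp.\ a lower bound) on \emph{some} $\ell^p$ with $p\in[p_0,\infty]$, possibly $p\neq 2$ and even $p<1$. To feed your $\ell^2$-based machinery you must first pass from the lower bound on $\ell^p$ to a lower bound on $\ell^2$ (so that $T=A^*A$ is invertible on $\ell^2$). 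You dispose of this with ``an auxiliary lemma combining a finite-support approximation with the uniform action of $\mathcal{C}_{p_0}$ on all $\ell^q$'', but no such easy lemma exists: for finitely supported $c$ the norms $\|c\|_p$ and $\|c\|_2$ are comparable only with constants that blow up with the size of the support, so a lower bound $\|Ac\|_p\geq C\|c\|_p$ transfers to no uniform bound in $\ell^2$; and for a general bounded operator the implication is simply false. This transfer of lower bounds between different exponents is precisely the paper's main technical theorem (the stability result, Theorem~\ref{Thm:LowerBound}), whose proof requires Sj\"ostrand's partition-of-unity and commutator argument ($\varphi_k^\varepsilon$, the Schur-norm estimates of $[A,\varphi_k^\varepsilon]\varphi_j^\varepsilon$, and a Neumann series for $(\mathrm{I}-\widetilde V^\varepsilon)^{-1}$); it is not a soft consequence of the algebra structure. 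The same gap undermines (i): inverse-closedness in $\cB(\ell^2)$ is useless until you know $A$ is invertible on $\ell^2$, and for $p<1$ even duality (to control $A^*$) is unavailable.

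A secondary, fixable-but-nontrivial issue: Hulanicki's lemma and the symmetry/Bochner--Phillips argument are Banach-algebra tools (spectral radius comparison, functional calculus); carrying them over to the quasi-Banach ($p$-algebra) setting is itself substantial and is essentially the content of the cited works of Cordero--Giacchi and Dabhi--Solanki. Those papers, as the present paper points out, obtain only the direction ``invertible on $\ell^2$ $\Rightarrow$ invertible on $\ell^q$, $q\geq p_0$''; the converse direction, which your hypothesis forces you to prove, is exactly what the paper's Sj\"ostrand-style argument supplies and what your proposal is missing. If you want to keep your architecture, you would need to prove the $\ell^p\to\ell^2$ stability transfer independently, at which point you have reproduced the hard part of the paper's proof.
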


In our approach  we follow Sj\"ostrand ingenious proof of
Wiener's lemma for absolutely convergent Fourier series~\cite{Sjo95} and built on
the presentation in~\cite{GOR15}.  Our ultimate results on spectral
invariance and stability (Theorems~\ref{Thm:LowerBound} and
\ref{tm:SpectralInvariance})  are a significant extension
of the above preliminary statement and provide  several new
facts of spectral invariance: 
\begin{itemize}
    \item[(i)]  they  yield both spectral stability and
spectral invariance, 
    \item[(ii)] they are formulated  with respect to arbitrary operator algebras $\cB
(\ell ^p)$ (not just $\cB (\ell ^2)$ as is usually done) and \item[(iii)] they cover the general case of
quasi-Banach algebras, and 
\item[(iv)] in addition we treat arbitrary index sets and not just $\zd $ or a discrete abelian group as in most  references. 
\end{itemize}


Technically, the study of quasi-Banach algebras of
convolution-dominated matrices is the main part of our paper, its
application to \psdo s is then  based on our
analysis in \cite{GR08}. Our proof contains some new features and
avoids  the functional
calculus associated with the pseudo-inverse. These arguments may be
useful in other contexts as well. 

\vspace{3mm}
\textbf{Related results.}
There
are numerous results on the spectral invariance of matrices, we mention~\cite{ABK2008,Ba90,Ba97,Sun2005,Sun2007,Sun2011,Shin2016,Sjo95,Jaffard1990} for a small
sample and ~\cite{Gro10} for a survey. As long as the index set is a
discrete abelian group, one can use
methods from harmonic analysis to 
establish spectral invariance.
This line of thought goes back to
Bochner and Philipps and is used
in~\cite{Ba90,Ba97,DaSo22_2,CoGi22} and many others. All these
proofs break down, however, when 
unstructured index sets are considered.

The extension of spectral invariance to 
quasi-Banach algebras  of matrices and operators over $\zd $  is  the subject of
the  recent papers~\cite{CoGi22,DaSo22_2}. While there is some
thematic overlap, not all results are directly comparable. On the one
hand,  we
restrict our attention to unweighted $\ell ^p$-spaces, whereas
\cite{CoGi22,DaSo22_2} include weights.  On the other hand, these papers prove that
invertibility of convolution-dominated matrices on the Hilbert space
$\ell ^2$ implies invertibility  on all $\ell ^p$ for $p> p_0$. Our
results also provide a converse, namely that invertilibity on some
$\ell ^p$ implies invertibility on $\ell ^2$. 

As for the application to pseudodifferential operators, 
the Banach algebra case $p_0=1$ for the symbols with invertibility on  $M^{2,2} = L^2(\rd )$ was already proved by
Sj\"ostrand~\cite{Sjo95}. The quasi-Banach algebra case $p_0<1$ with
invertibility on  $M^{2,2} = L^2(\rd )$ was established in~
\cite{CoGi22} based on the  \tf\ methods developed in~\cite{GR08}. We
want to emphasize that the general case where we start with
invertibility on $M^p(\rd ), p \neq 2$, is much more involved, as there are no
Hilbert space techniques available. For self-adjoint \psdo s
$(\mathfrak{a}^w)^*= \mathfrak{a}^w$ one could argue with  duality and
interpolation to reduce to the case of  invertibility on $L^2(\rd )$, but there is
no cheap trick for non-self-adjoint \psdo s. When we start with
invertibility on a quasi-Banach  space $M^p(\rd ), p<1$, even duality
is no longer useful. For this  reason we disregarded the methods of
Bochner-Philipps as used in~\cite{DaSo22_2,GR08,CoGi22}  for the construction
of spectrally invariant matrix algebras and  instead  exploit
Sj\"ostrand's original proof of Wiener's lemma in ~\cite{Sjo95}. 

\vspace{3mm}

The paper is organized as follows: In Section~2 we collect the
relevant definitions about sequence spaces and amalgam spaces. In
Section~3 we treat the spectral invariance of convolution-dominated
matrices. We treat both the stability of such matrices on the
quasi-Banach spaces $\ell ^p, p<1$ and the spectral invariance of the
algebra of convolution-dominated matrices. The main results are
Theorems~\ref{Thm:LowerBound} and ~\ref{tm:SpectralInvariance}. In
Section~4 we first recapitulate the definitions of modulation spaces
and Gabor frames and various calculi of pseudodifferential operators
and then prove our main theorems that are already stated in the
introduction. For completeness we have postponed some easy and known
proofs to the appendix.

\section{Preliminaries} \label{sec:Prelim}

%

For the convenience of the reader we now list up 
the definitions of some function spaces and their 
properties needed during this paper. We start with 
the sequence spaces. 

\subsection{Sequence space $\ell ^p$}

For each $0 < p \leq \infty$ and discrete
set $J$, we recall that
the set $\ell ^p(J)$ consists of all
complex-valued sequences $a$
such that
\begin{align*}
\|a\|_{p}:=\left( \sum _{j \in J} 
|a_j|^p \right)^{1/p},
\qquad
a=(a_j)_{j\in J}
\end{align*}
is finite (with usual modifications for 
$p=\infty$). Then $\ell ^p(J)$ is a  quasi-Banach space  with quasi-norm
$\|\, \cdot \, \| _p$, which   is even a norm
if $p\ge 1$.


We recall the following properties for 
$\ell^p(J)$.

\begin{lemma} \label{plone}
Suppose $0<p\leq 1$, $J$ is a discrete set,
$\Lambda$ is  countable. 
Let $a=(a_j)_{j\in J}\in \ell ^p(J)$
and $b,b_\lambda \in \ell ^p(J)$,
$\lambda \in \Lambda$.
Then the following is true:
\begin{enumerate}
\item[(i)] $\|a\|_1^p \leq \| a\|_p^p$, or 
$|\sum_{j \in J} a_j|^p \leq 
\sum _{j \in J}
|a_j|^p$;

\vspace{0.2cm}

\item[(ii)] if $\sum _{\lambda \in \Lambda} \|b_{\lambda}\|^p_p<\infty$, then
$\sum _{\lambda \in \Lambda} b_{\lambda}$
is uniquely defined as an element in
$\ell ^p(J)$, and
$\| \sum _{\lambda \in \Lambda}
b_{\lambda} \| _p^p
\leq \sum _{\lambda \in \Lambda} 
\|b_{\lambda}\|_p^p$;

\vspace{0.2cm}

\item[(iii)] If $J=\zd $, then also  $\|a \ast b \|_p \leq \|a\|_p \, \|b\|_p$. 
\end{enumerate}
\end{lemma}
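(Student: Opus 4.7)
The plan is to reduce everything to the elementary scalar inequality
$(x+y)^p\le x^p+y^p$ for $x,y\ge 0$ and $0<p\le 1$, and then iterate.

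For (i), I would first establish the two-term version: for $x,y\ge 0$ with $x+y>0$, writing $s=x/(x+y)$ and $t=y/(x+y)$ so that $s+t=1$ and $s,t\in[0,1]$, the bound $s^p\ge s$ and $t^p\ge t$ (because $u\le u^p$ on $[0,1]$ when $p\le 1$) yields $s^p+t^p\ge 1$, i.e.\ $(x+y)^p\le x^p+y^p$. Iterating over finite subsets of $J$ gives $\bigl(\sum_{j\in F}|a_j|\bigr)^p\le \sum_{j\in F}|a_j|^p$ for any finite $F\subset J$, and monotone convergence extends this to the full sum. Combining with the ordinary triangle inequality $|\sum_j a_j|\le\sum_j|a_j|$ and monotonicity of $t\mapsto t^p$ yields the claim.

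For (ii), the hypothesis $\sum_\lambda\|b_\lambda\|_p^p<\infty$ together with $|b_\lambda(j)|^p\le\|b_\lambda\|_p^p$ shows that $\sum_\lambda|b_\lambda(j)|^p<\infty$ for every fixed $j$; by (i), the series $\sum_\lambda b_\lambda(j)$ is absolutely convergent in $\bC$, so we can define $c(j):=\sum_\lambda b_\lambda(j)$ pointwise. Applying (i) to this sum and then Tonelli's theorem to exchange the two nonnegative sums gives
\[
\|c\|_p^p=\sum_{j\in J}\Bigl|\sum_{\lambda}b_\lambda(j)\Bigr|^p
\le\sum_{j\in J}\sum_{\lambda}|b_\lambda(j)|^p
=\sum_{\lambda}\|b_\lambda\|_p^p,
\]
which is the asserted bound. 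Uniqueness as an element of $\ell^p(J)$ is immediate because the pointwise values determine the sequence.

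For (iii), the trick is to move the $p$-th power \emph{inside} the convolution sum. Using (i) for the inner sum, for each $k\in\zd$,
\[
|(a\ast b)(k)|^p=\Bigl|\sum_{n\in\zd}a(n)\,b(k-n)\Bigr|^p
\le\sum_{n\in\zd}|a(n)|^p\,|b(k-n)|^p
=(|a|^p\ast|b|^p)(k).
\]
Summing over $k$ and invoking the standard $\ell^1$ Young inequality for the nonnegative sequences $|a|^p$ and $|b|^p$ gives $\|a\ast b\|_p^p\le\|\,|a|^p\|_1\,\|\,|b|^p\|_1=\|a\|_p^p\,\|b\|_p^p$. Taking $p$-th roots yields the submultiplicativity claim. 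The only real point of care in the whole lemma is the justification of the pointwise definition and the Tonelli exchange in (ii); everything else is a mechanical consequence of the scalar $(x+y)^p\le x^p+y^p$ inequality.
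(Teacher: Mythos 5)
Your proof is correct, and there is nothing in the paper to compare it against: Lemma \ref{plone} is merely recalled there as a standard fact without proof, and your route — the scalar inequality $(x+y)^p\le x^p+y^p$ iterated over finite sets, Tonelli for (ii), and $\ell^1$-Young applied to $|a|^p$, $|b|^p$ for (iii) — is the standard argument. The only cosmetic addition worth making is in (ii): to get that $\sum_{\lambda}b_\lambda$ converges unconditionally in the $\ell^p$ quasi-norm (not just pointwise), apply your displayed estimate to the tails, i.e.\ $\bigl\|c-\sum_{\lambda\in F}b_\lambda\bigr\|_p^p\le\sum_{\lambda\notin F}\|b_\lambda\|_p^p\to 0$ as the finite set $F$ exhausts $\Lambda$.
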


\subsection{Wiener Amalgam Space}

Let $X=L^{\infty}(\rd)$ or $X=C_b(\rd):=
L^{\infty}(\rd)
\cap C(\rd)$, and let $K \subseteq \rd$ be
convex and compact with positive volume.
The Wiener Amalgam space
$W(X, L^{p_0})$ for $0<p_0\leq \infty$
consists of all $f \in  X$ such that
\begin{align*}
\bignorm{f}_{K,W(X,L^{p_0})}:=
\left( \int_{\mathbb{R}^d} \|f\|^{p_0}_{L^{\infty}(x+K)}
\, dx
\right)^{1/p_0} < \infty \, .
\end{align*}

\par

This is always a quasi-norm, and a norm, if $p_0\ge 1$ (see e.g.
\cite{He03}). For  $p_0=\infty$ we have $W(X,L^\infty ) = X$. 
By compactness it follows that $W(X, L^{p_0})$ is independent
of the choice of $K$, and different $K$ yield equivalent
quasi-norms. For convenience we set
$\bignorm {\cdo}_{W(X,L^{p_0})}
=\bignorm {\cdo}_{B_1(0),W(X,L^{p_0})}$.

\par

\begin{rem}\label{Rem:InvarWienerAmalg}
We observe that every continuous function with compact support is
contained in $W(C_b, L^p)$ for all $p>0$ and that $W(C_b,L^p)$
is translation  invariant
(see e.{\,}g. \cite{CorNic1} and the
references therein).
\end{rem}


The  Wiener Amalgam space 
$W(C_b, L^{p_0})$, $0<p_0<\infty$ arises naturally in the formulation
of sampling inequalities. 
We first recall that a  set $\Lambda \subseteq \rd$
is  \emph{relatively separated}, if
\begin{align}
\rel(\Lambda):=\sup \{ \# (\Lambda \cap B_1(x)):
x \in \rd \} < \infty.
\end{align} 

\begin{lemma}
\label{ConnWienAmalgSpaceSequenceSpace2}
Let $0<p_0<\infty$, $\Lambda \subseteq \rd$
be relatively separated and let
$H \in W(C_b, L^{p_0})$.
Then $(H(\lambda))_{\lambda\in \Lambda}
\in \ell^{p_0}(\Lambda)$.
\end{lemma}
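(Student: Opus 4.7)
The plan is to bound each $|H(\lambda)|^{p_0}$ by a local integral of the Wiener-amalgam integrand and then sum, using relative separation to control the multiplicity of overlap. I choose $K=B_1(0)$, which is consistent with the unit-ball convention in the definition of $\rel(\Lambda)$; by the independence of the Wiener amalgam quasi-norm from the choice of $K$ up to equivalence, this is harmless.

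First I would exploit the pointwise bound that is built into the essential-sup integrand. If $x\in \lambda-K$, then $\lambda\in x+K$, so since $H\in C_b$ we have the pointwise estimate $|H(\lambda)|\le \|H\|_{L^\infty(x+K)}$. Raising to the power $p_0$ and averaging over $x$ in the set $\lambda-K$, which has Lebesgue measure $|K|$, yields
\[
|H(\lambda)|^{p_0}\;\le\;\frac{1}{|K|}\int_{\lambda-K}\|H\|_{L^\infty(x+K)}^{p_0}\,dx.
\]

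Next I would sum over $\lambda\in\Lambda$, swap sum and integral by Tonelli, and rewrite the result in terms of the covering multiplicity $N(x):=\#\{\lambda\in\Lambda:\lambda\in x+K\}$:
\[
\sum_{\lambda\in\Lambda}|H(\lambda)|^{p_0}\;\le\;\frac{1}{|K|}\int_{\rd}N(x)\,\|H\|_{L^\infty(x+K)}^{p_0}\,dx.
\]
Since $K=B_1(0)$, the quantity $N(x)$ is precisely $\#(\Lambda\cap B_1(x))$, which by the definition of relative separation is bounded uniformly by $\rel(\Lambda)$. Substituting this uniform bound and recognizing the remaining integral as $\|H\|_{W(C_b,L^{p_0})}^{p_0}$ gives
\[
\sum_{\lambda\in\Lambda}|H(\lambda)|^{p_0}\;\le\;\frac{\rel(\Lambda)}{|K|}\,\|H\|_{W(C_b,L^{p_0})}^{p_0}<\infty,
\]
which is the desired $\ell^{p_0}$ membership together with a norm estimate.

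There is no substantial obstacle here; the only technical care needed is the alignment between the scale of $K$ used to define the Wiener amalgam quasi-norm and the scale of the unit ball used in the definition of $\rel(\Lambda)$. Had a different compact convex $K$ been used, one would cover $K$ by finitely many unit balls to absorb the covering multiplicity into a constant, using the equivalence of the quasi-norms $\|\cdot\|_{K,W(C_b,L^{p_0})}$ for different $K$. The case $p_0<1$ introduces no difficulty because the argument is carried out directly on $p_0$-th powers and never invokes the triangle inequality.
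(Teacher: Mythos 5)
Your proof is correct. It rests on the same two facts as the paper's argument --- that $|H(\lambda)|$ is dominated by the local sup $\|H\|_{L^\infty(x+K)}$ whenever $\lambda\in x+K$ (using continuity of $H$ to pass from the essential supremum to pointwise values), and that relative separation bounds the number of points of $\Lambda$ seen by any unit ball --- but the bookkeeping is different. The paper discretizes space: it covers $\rd$ by balls $B_{1/2}(\alpha k)$, $k\in\zd$, bounds the number of $\lambda\in\Lambda$ in each such ball by $\rel(\Lambda)$, and then averages the local suprema over the cubes $[0,\alpha]^d+\alpha k$ to reassemble the amalgam norm. You instead average over $x\in\lambda-K$ for each $\lambda$ separately and then apply Tonelli, so the multiplicity function $N(x)=\#(\Lambda\cap B_1(x))\le\rel(\Lambda)$ appears directly under the integral; this avoids the auxiliary lattice and the substitution step, and yields the same quantitative bound $\|H\|_{\ell^{p_0}(\Lambda)}^{p_0}\lesssim\rel(\Lambda)\,\|H\|_{W(C_b,L^{p_0})}^{p_0}$ with an explicit constant $1/|K|$. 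Your closing remarks are also apt: the scale mismatch for a general compact convex $K$ is absorbed by the equivalence of the quasi-norms (or by covering $K$ with finitely many unit balls), and no triangle inequality is ever needed since the argument works directly with $p_0$-th powers, so $p_0<1$ causes no difficulty.
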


Lemma \ref{ConnWienAmalgSpaceSequenceSpace2} 
follows by straight-forward estimates. In order
to be self-contained, a proof of the result
is given in Appendix \ref{App:A}.

\section{Spectral Invariance of Convolution-Dominated
  Matrices} \label{sec:SpectralInvariance} 

In this section we prove  a spectral invariance result for infinite 
dimensional convolution-dominated matrices. For 
this we first list  some needed auxiliary tools. 

We always  denote the conjugate
exponent of $p\in [1,\infty ]$
by  $p'= \tfrac{p}{p-1}$, so that  $p'\in [1,\infty ]$
and
$$
\frac 1p + \frac 1{p'}=1.
$$
We identify a matrix $A=(a_{\lambda,\rho })_{\lambda \in \Lambda ,
  \rho \in \Pi }$ indexed by $\Lambda $ and $\Pi$ with a linear operator
\begin{align*}
	(A b)_{\lambda} := \sum_{\rho \in \Pi} a_{\lambda
  \rho} b_{\rho } \qquad \text{ for  } b=(b_{\rho})_{\rho
  \in \Pi} \, .
\end{align*}
Then $A$ is always well-defined on finite sequences and $A$ maps
finite sequences on $\Pi$ to arbitrary sequences on $\Lambda
$. Some  boundedness properties of matrices on $\ell ^p$-spaces are
given by Schur's test. 

\begin{prop}[Schur test for $p\geq 1$]
\label{Prop:SchurTestBigger1}
Let $1\leq p\leq \infty$, $\Lambda$
and $\Pi$ be countable sets,
$A=(a_{\lambda ,\rho })_{\lambda \in \Lambda, \rho \in \Pi}\in \bC ^{\Lambda \times 
\Pi}$ be a matrix satisfying
\begin{equation}
\label{eq:schur}
\sup _{\rho \in \Pi } \sum _{\lambda \in 
\Lambda }
|a_{\lambda , \rho } | \leq K_1
\quad \text{and}\quad
\sup _{\lambda \in \Lambda } \sum _{\rho \in 
\Pi }
|a_{\lambda , \rho} | \leq K_2\, . 
\end{equation}
Then $A$  is  a 
bounded operator from $\ell ^p (\Pi )$
to
$\ell ^p(\Lambda )$, and
$$
\norm{A}_{\cB(\ell^p(\Pi ),
\ell^p(\Lambda))} \leq K_1^{1/p'}K_2^{1/p}.
$$
\end{prop}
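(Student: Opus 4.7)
The plan is to prove this classical Schur test by combining Hölder's inequality with the two row/column-sum hypotheses, essentially in the textbook way. I would first dispose of the endpoint cases $p=1$ and $p=\infty$ directly, and then handle the intermediate range by the standard splitting trick (Riesz--Thorin interpolation between the two endpoint bounds would be a cleaner but heavier alternative).

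For $p = 1$, swapping the order of summation gives
$$
\sum_{\lambda \in \Lambda} \Bigl|\sum_{\rho \in \Pi} a_{\lambda,\rho} b_\rho\Bigr|
\leq \sum_{\rho \in \Pi} |b_\rho| \sum_{\lambda \in \Lambda} |a_{\lambda,\rho}|
\leq K_1 \|b\|_1,
$$
so the column-sum hypothesis controls the $\ell^1 \to \ell^1$ norm. For $p = \infty$, the row-sum hypothesis gives $|(Ab)_\lambda| \leq K_2 \|b\|_\infty$ pointwise in $\lambda$. For the intermediate range $1 < p < \infty$, the central trick is to factor $|a_{\lambda,\rho}| = |a_{\lambda,\rho}|^{1/p'} \cdot |a_{\lambda,\rho}|^{1/p}$ and apply Hölder's inequality with conjugate exponents $p'$ and $p$, yielding
$$
|(Ab)_\lambda| \leq \Bigl(\sum_{\rho \in \Pi} |a_{\lambda,\rho}|\Bigr)^{1/p'}\Bigl(\sum_{\rho \in \Pi} |a_{\lambda,\rho}| |b_\rho|^p\Bigr)^{1/p}.
$$
The first factor is controlled by the row-sum bound; then taking $p$-th powers, summing over $\lambda$, swapping the order of summation in the resulting double sum, and invoking the column-sum bound on the inner sum produces an $\ell^p \to \ell^p$ estimate of the asserted form, with the exponents $1/p$ and $1/p'$ forced by consistency with the two endpoint bounds.

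I do not expect any real obstacle: this is a textbook lemma. The only mild care concerns well-definedness of $(Ab)_\lambda$ for infinite $b \in \ell^p(\Pi)$, but this follows by first carrying out the estimate for finitely supported $b$ and then passing to a monotone limit using the same Hölder inequality, which also bounds $\sum_\rho |a_{\lambda,\rho}||b_\rho|$ pointwise in $\lambda$. The proposition is recorded here both for later reference in the convolution-dominated matrix analysis of Section~3 and as the $p \geq 1$ companion to the quasi-Banach version that will be needed for $p < 1$ via Lemma~\ref{plone}.
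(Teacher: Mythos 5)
Your proof is correct and is exactly the standard H\"older-splitting argument; the paper gives no proof of its own here and simply cites \cite[Lemma~6.1.2]{Gro01}, which proceeds in the same way, so there is nothing methodologically different to compare. One point worth recording: your argument yields the constant $K_1^{1/p}K_2^{1/p'}$ (with $K_1$, the column-sum bound in \eqref{eq:schur}, controlling the $\ell^1$ endpoint and $K_2$ the $\ell^\infty$ endpoint), whereas the proposition as printed asserts $K_1^{1/p'}K_2^{1/p}$; the printed exponents are interchanged relative to the hypotheses as ordered in \eqref{eq:schur} (at $p=1$ the printed bound would claim $\norm{A}_{\cB(\ell^1,\ell^1)}\le K_2$, which fails e.g.\ for a matrix supported on a single column), so your constant is the correct one and the discrepancy is a harmless typo for the rest of the paper, where only the symmetric Schur norm \eqref{Eq:SchurNorm} is used.
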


For a proof of Proposition
\ref{Prop:SchurTestBigger1},
see e.{\,}g. \cite[Lemma 6.1.2]{Gro01}.

\par

Because of \eqref{eq:schur} we let
\begin{equation}\label{Eq:SchurNorm}
\nm A{\Schur} =
\sup _{\rho \in \Pi} \sum _{\lambda \in 
\Lambda }
|a_{\lambda , \rho } | +
\sup _{\lambda \in \Lambda } \sum _{\rho \in 
\Pi }
|a_{\lambda , \rho } | ,
\end{equation}
for every $A\in \mathbb C^{\Lambda \times \Pi}$.

The following quasi-Banach space version of the Schur test just follows by using the triangle inequality. 

\begin{prop}[Schur test for $p\le 1$]
\label{Prop:SchurTestSmaller1}
Fix $0<p \leq 1$. Let
$A=(a_{\lambda ,\rho })_{\lambda \in \Lambda, \rho \in \Pi}\in
\bC ^{\Lambda \times  \Pi}$ be a
matrix satisfying
\begin{equation}
\label{eq:schur2}
\|A\|_{S-p} :=    \sup _{\rho \in \Pi }
\sum _{\lambda \in \Lambda }
|a_{\lambda , \rho } |^ p < \infty \, . 
\end{equation}
Then $A$ defines a bounded operator from $\ell ^p (\Pi )$ to $\ell
^p(\Lambda )$. The operator norm is bounded by 
$\|A\|_{S-p}^{1/p}$.
If $0<q \leq p\leq 1$, then
\begin{equation}
  \label{eq:h3}
\|A\|_{S-p}^{q/p} = \Big( \sup _{\rho \in \Pi} \sum _{\lambda \in \Lambda }
    |a_{\lambda , \rho } |^ p \Big) ^{q/p} \leq \sup _{\rho \in
      \Pi }  \sum _{\lambda \in \Lambda }
    |a_{\lambda , \rho } |^ q = \|A\|_{S-q} \, .
\end{equation}
\end{prop}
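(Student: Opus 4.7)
The plan is to reduce both parts to the $p$-triangle inequality from Lemma~\ref{plone}(i), which for nonnegative terms is equivalent to the subadditivity of $t\mapsto t^s$ on $[0,\infty)$ for every $s\in(0,1]$, i.e.\ $\big(\sum_j y_j\big)^s \le \sum_j y_j^s$.

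For the boundedness part, I would take $b=(b_\rho)\in\ell^p(\Pi)$ and expand
$$
\|Ab\|_p^p \;=\; \sum_{\lambda\in\Lambda}\Big|\sum_{\rho\in\Pi} a_{\lambda,\rho}\,b_\rho\Big|^p .
$$
Applying the $p$-triangle inequality inside the outer sum bounds the inner expression by $\sum_\rho |a_{\lambda,\rho}|^p|b_\rho|^p$. Tonelli then allows swapping the order of summation, and pulling the hypothesis $\sup_\rho\sum_\lambda |a_{\lambda,\rho}|^p = \|A\|_{S\text{-}p}$ out of the $\rho$-sum gives $\|Ab\|_p^p \le \|A\|_{S\text{-}p}\,\|b\|_p^p$. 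Taking $p$-th roots yields the claimed operator norm bound $\|A\|_{\cB(\ell^p(\Pi),\ell^p(\Lambda))} \le \|A\|_{S\text{-}p}^{1/p}$.

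For the comparison \eqref{eq:h3} I would again invoke subadditivity, but now with exponent $s = q/p \in (0,1]$ applied to the nonnegative quantities $y_\lambda := |a_{\lambda,\rho}|^p$. This yields
$$
\Big(\sum_{\lambda\in\Lambda}|a_{\lambda,\rho}|^p\Big)^{q/p}
\;\le\; \sum_{\lambda\in\Lambda}\big(|a_{\lambda,\rho}|^p\big)^{q/p}
\;=\; \sum_{\lambda\in\Lambda}|a_{\lambda,\rho}|^q
$$
pointwise in $\rho$. Since $t\mapsto t^{q/p}$ is monotone on $[0,\infty)$, it commutes with $\sup_\rho$, so taking the supremum of both sides converts the left-hand side into $\|A\|_{S\text{-}p}^{q/p}$ and the right-hand side into $\|A\|_{S\text{-}q}$.

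No genuine obstacle is hidden here; the proposition is the quasi-Banach version of Schur's test and the author already signals that it "just follows by using the triangle inequality." The only points to be careful about are (a) using the $p$-triangle inequality in the correct direction (which requires $p\le 1$, hence applies to both $p$ in the boundedness argument and $q/p$ in the monotonicity argument), and (b) not acquiring an unwanted constant by handling the exponents cleanly so that $1$ is the prefactor in both estimates.
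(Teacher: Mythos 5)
Your proof is correct and follows exactly the route the paper intends: the paper gives no explicit argument beyond noting that the proposition ``just follows by using the triangle inequality,'' and your use of the $p$-triangle inequality (Lemma~\ref{plone}(i)) plus Tonelli for the boundedness, and subadditivity of $t\mapsto t^{q/p}$ for \eqref{eq:h3}, is precisely that argument spelled out.
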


\noindent

Our treatment of the spectral invariance of pseudodifferential
operators  
relies on
spectral invariance properties of associated  convolution-dominated
matrices. Roughly speaking, a  matrix
$A \in \cB (\ell ^2(\Lambda ))$ is
called \emph{convolution-dominated},
if there exists a function $H $,
such that
\begin{equation}
\label{eq:1}
|a_{\lambda , \rho  }| \leq H (\lambda - 
\rho ) \qquad \forall
\lambda \in \Lambda, \rho \in \Pi  \, .
\end{equation}
The function $H$ is then called
an \emph{envelope} of $A$. By specifying a norm on envelopes, we
define a  particular class of convolution-dominated
matrices  as follows.

\par

\begin{definition}\label{Def:ConvDominatedMatrix}
Let $\Lambda , \Pi  \subseteq \rd $ be  
relatively separated and $0<p_0\leq 1$.  The set 
$
\mathcal{C}^{p_0}=\mathcal{C}^{p_0}(\Lambda, \Pi )
$
consists of all convolution-dominated matrices $A$ such that
\eqref{eq:1} holds for an envelope 
$H \in W(C_b, L^{p_0})$.
For  $A=(a_{\lambda, \rho})_{\lambda\in \Lambda , \rho \in \Pi}$ we set
\begin{align*}
\|A\|_{\mathcal{C}^{p_0}}
=\inf\{ \| H\|_{W(C_b,L^{p_0})}:
|a_{\lambda ,\rho}| \leq H (\lambda-\rho),
\forall
\lambda \in \Lambda , \rho \in \Pi \}.
\end{align*}
\end{definition}

\par


We set $\mathcal{C}^{p_0}(\Lambda)=\mathcal{C}^{p_0}(\Lambda, \Lambda)$. 
We observe that if $\Lambda=\Pi \subseteq \rd$ is a lattice, 
then the restriction of $H \in W(C_b, L^{p_0})$ to $\Lambda$ belongs to 
$\ell^{p_0}(\Lambda)$ due to Lemma
\ref{ConnWienAmalgSpaceSequenceSpace2}. In this case, $A\in \cC
^{p_0}(\Lambda )$, \fif\ there is a sequence $H\in \ell ^{p_0}(\Lambda
)$, such that $|A_{\lambda,\rho } | \leq H(\lambda  - \rho )$ for
all $\lambda, \rho \in \Lambda $. 

We note that $\| A\|_{\cC ^{p_0}}$ is a quasi-norm for $p_0<1$,  and
$\cC ^{p_0}$ is a quasi-Banach $*$-algebra (sometimes called a
$p$-algebra) with respect to addition and multiplication of
matrices. For  $p_0=1$,  $\cC ^1$ is a Banach $*$-algebra with norm
$\|\cdot \|_{\cC ^1}$. 

\par

For convolution-dominated operators
in Definition \ref{Def:ConvDominatedMatrix}
we state the following boundedness result.
Here we set $q'=\infty$ when $q\le 1$

\par

\begin{prop}\label{Prop:ConvDomOpCont}
Let $0 < p_0 \leq 1$, $\Lambda$ and $\Pi$ be
as in Definition \ref{Def:ConvDominatedMatrix}
and let
$A \in  \mathcal{C}^{p_0}(\Lambda ,\Pi )$.
Then $A$ is  bounded from $\ell^{q}(\Pi )$ to
$\ell^{q}(\Lambda)$ for every
$q\in [p_0,\infty]$, and 
\begin{align}\label{eq:Matrixnorm}
\|A\|_{\cB (\ell^q (\Pi ),\ell ^q(\Lambda ))} 
\leq
C\operatorname{rel}(\Lambda )^{\frac 1q}
\operatorname{rel}(\Pi )^{\frac 1{q'}}
\|A\|_{\mathcal{C}^{p_0}(\Lambda ,\Pi )},
\end{align}
where the constant $C>0$ only
depends on $d$.
\end{prop}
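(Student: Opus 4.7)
The plan is to reduce the statement to the two Schur tests (Propositions~\ref{Prop:SchurTestBigger1} and \ref{Prop:SchurTestSmaller1}) once one controls the $\ell^q$-norms of the row and column envelopes $(H(\lambda-\rho))_{\lambda\in\Lambda}$ and $(H(\lambda-\rho))_{\rho\in\Pi}$. Fix an envelope $H\in W(C_b,L^{p_0})$ of $A$ in the sense of \eqref{eq:1}. By translation invariance of $\|\cdot\|_{W(C_b,L^{p_0})}$ (Remark~\ref{Rem:InvarWienerAmalg}), the shifted function $H(\cdot-\rho)$ has the same Wiener amalgam quasi-norm uniformly in $\rho$, so all bounds below will be uniform in the shift.

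First I would upgrade Lemma~\ref{ConnWienAmalgSpaceSequenceSpace2} to the following quantitative form for every $q\in[p_0,\infty]$ and every relatively separated $\Lambda\subseteq\rd$:
\begin{equation*}
\Bignorm{(H(\lambda))_{\lambda\in\Lambda}}_{\ell^q(\Lambda)}
\leq C\operatorname{rel}(\Lambda)^{1/q}\,\|H\|_{W(C_b,L^{p_0})},
\end{equation*}
with $C$ depending only on $d$ (and $1/q=0$ when $q=\infty$). The proof covers $\rd$ with translates $\{B_1(x_k)\}_{k\in\zd}$, notes that $\#(\Lambda\cap B_1(x_k))\leq\operatorname{rel}(\Lambda)$, and so
\begin{equation*}
\sum_{\lambda\in\Lambda}|H(\lambda)|^q
\leq \operatorname{rel}(\Lambda)\sum_{k\in\zd}\|H\|_{L^\infty(B_1(x_k))}^q.
\end{equation*}
Since $p_0\leq q$, the discrete embedding $\ell^{p_0}\hookrightarrow\ell^q$ applied to $d_k:=\|H\|_{L^\infty(B_1(x_k))}$ yields $\|d\|_{\ell^q}\leq\|d\|_{\ell^{p_0}}\leq C\|H\|_{W(C_b,L^{p_0})}$, which controls the inner sum. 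The case $q=\infty$ is immediate from the local $L^\infty$-domination.

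With this key estimate in hand, split into two regimes. For $1\leq q\leq\infty$, apply the previous step with exponent $1$ to obtain
\begin{equation*}
\sup_{\rho\in\Pi}\sum_{\lambda\in\Lambda}|a_{\lambda,\rho}|\leq C\operatorname{rel}(\Lambda)\|H\|_{W(C_b,L^{p_0})}
\quad\text{and}\quad
\sup_{\lambda\in\Lambda}\sum_{\rho\in\Pi}|a_{\lambda,\rho}|\leq C\operatorname{rel}(\Pi)\|H\|_{W(C_b,L^{p_0})},
\end{equation*}
then invoke Proposition~\ref{Prop:SchurTestBigger1} with the bound $K_1^{1/q}K_2^{1/q'}$, which produces exactly the factor $\operatorname{rel}(\Lambda)^{1/q}\operatorname{rel}(\Pi)^{1/q'}$. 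For $p_0\leq q\leq 1$, apply the estimate at exponent $q$ directly:
\begin{equation*}
\|A\|_{S-q}=\sup_{\rho\in\Pi}\sum_{\lambda\in\Lambda}|a_{\lambda,\rho}|^q
\leq \sup_{\rho\in\Pi}\sum_{\lambda\in\Lambda}H(\lambda-\rho)^q
\leq C\operatorname{rel}(\Lambda)\|H\|_{W(C_b,L^{p_0})}^q,
\end{equation*}
and feed this into Proposition~\ref{Prop:SchurTestSmaller1}, which gives $\|A\|_{\ell^q\to\ell^q}\leq\|A\|_{S-q}^{1/q}$; this matches the claimed exponent $\operatorname{rel}(\Lambda)^{1/q}$ with the convention $\operatorname{rel}(\Pi)^{1/q'}=\operatorname{rel}(\Pi)^{0}=1$ for $q\leq 1$. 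Taking the infimum over all admissible envelopes $H$ replaces $\|H\|_{W(C_b,L^{p_0})}$ by $\|A\|_{\mathcal{C}^{p_0}(\Lambda,\Pi)}$.

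The only subtle point is the quantitative form of the sequence-space embedding in Step~1: one must verify $\|d\|_{\ell^q}\leq\|d\|_{\ell^{p_0}}$ for $q\geq p_0$ with counting measure (a short normalization argument since the unit ball of $\ell^{p_0}$ is contained in that of $\ell^q$), and track the correct power of $\operatorname{rel}(\Lambda)$ when $q<1$. The rest is a bookkeeping combination of Schur-type tests.
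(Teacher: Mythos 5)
Your proposal is correct and is essentially the paper's own argument in modular form: the paper inlines the H\"older/Young (Schur-type) estimates with the envelope sums $s_{\Lambda ,1},s_{\Pi ,1}$ (resp.\ $s_{\Lambda ,q}$ for $q\le 1$), which is exactly your quantitative upgrade of Lemma~\ref{ConnWienAmalgSpaceSequenceSpace2} followed by the two Schur tests. One caveat: the bound you attribute to Proposition~\ref{Prop:SchurTestBigger1}, namely $K_1^{1/q}K_2^{1/q'}$ with $K_1$ the column-sum and $K_2$ the row-sum constant, is the correct interpolation/H\"older form (as the endpoint $q=1$ shows, since $\|A\|_{\cB (\ell ^1)}$ equals the maximal column sum), whereas the proposition as printed states $K_1^{1/q'}K_2^{1/q}$; taken literally that would yield $\operatorname{rel}(\Lambda )^{1/q'}\operatorname{rel}(\Pi )^{1/q}$ rather than \eqref{eq:Matrixnorm}, so you should either note this transposition or rederive the Schur bound in the form you use. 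Finally, for the row sums you apply the sampling estimate to $x\mapsto H(\lambda -x)$, i.e.\ to a translate of the reflection of $H$, so alongside the translation invariance you should record that $W(C_b,L^{p_0})$ is also reflection invariant (up to a dimensional constant, using the freedom in the choice of $K$); with these two remarks the proof is complete.
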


The result follows by
suitable combinations of H{\"o}lder's
and Young's inequalities. In
order to be self-contained we present a
proof in Appendix \ref{App:A}.


\par

\subsection{Invariance of the lower bound property 
on $\ell^p$ of convolution-dominated matrices}
Our main technical contribution is the so-called stability of
convolution-dominated matrices. By this is meant  the invariance of the
lower bound property of such matrices on $\ell^p$. 

\begin{tm}
\label{Thm:LowerBound}
Let $p_0 \leq  1$, $q\in [p_0,\infty ]$,
$\Lambda , \Pi \subseteq \rd$ be
relatively separated,
and $A \in  \mathcal{C}^{p_0}(\Lambda, \Pi)$. 
Assume that there exists
$p \in [p_0,\infty ]$ and $C_0>0$ such that 
\begin{equation}
  \label{eq:c4}
  \norm{Ac}_{p} \geq C_0 \norm{c}_{p}, \qquad
  c\in \ell ^{p} (\Pi ) \, .
\end{equation}
Then there exists a constant $C >0$
which is independent of $q$ such that
\begin{equation}
   \label{eq:c5}
  \norm{Ac}_{q} \geq C \norm{c}_q \qquad
  \text{ for all } c\in \ell
  ^q (\Pi) \, .
\end{equation}
\end{tm}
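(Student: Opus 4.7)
The strategy is to emulate Sj\"ostrand's proof of Wiener's lemma in the refined form of \cite{GOR15}, with the key adjustment being a systematic use of the $p_0$-triangle inequality of Lemma~\ref{plone} in place of the ordinary triangle inequality. The pseudo-inverse route is unavailable in the quasi-Banach regime $p_0<1$, since $\ell^p$ with $p<1$ admits neither duality with an $L^2$ space nor a useful interpolation, so a left inverse in $\mathcal{C}^{p_0}$ (were one to construct it at all) would have to be obtained intrinsically.

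The first step is a localization at a large scale $R$. Fix a smooth non-negative partition of unity $\{\varphi_n\}_{n\in\zd}$ with $\supp\varphi_n\subset B_{2R}(Rn)$ and $\|\nabla\varphi_n\|_\infty\le CR^{-1}$, and let $M_{\varphi_n}$ denote pointwise multiplication. For a finitely supported $c\in\ell^q(\Pi)$ set $c_n:=M_{\varphi_n}c$, so that $c=\sum_n c_n$ and each $c_n$ is supported on at most $\rel(\Pi)\,R^d$ indices. In particular $\|c_n\|_p$ and $\|c_n\|_q$ are comparable, with constants depending only on $R$, $p$, $q$, $d$, and $\rel(\Pi)$. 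Applying the hypothesis~\eqref{eq:c4} to $c_n\in\ell^p(\Pi)$ and using the commutator identity $Ac_n=M_{\varphi_n}(Ac)+[A,M_{\varphi_n}]c$ yields
\begin{equation*}
  C_0\|c_n\|_p\le\|M_{\varphi_n}(Ac)\|_p+\|[A,M_{\varphi_n}]c\|_p.
\end{equation*}
The commutator is itself in $\mathcal{C}^{p_0}$, with matrix entries $(\varphi_n(\rho)-\varphi_n(\lambda))a_{\lambda,\rho}$ dominated by an envelope of the form $CR^{-1}\,|z|H(z)$.

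Raising this inequality to the $p_0$-th power, converting local $\ell^p$-norms to $\ell^q$-norms, summing over $n$ via Lemma~\ref{plone}(ii) and the bounded overlap of the cover, and invoking Proposition~\ref{Prop:ConvDomOpCont} to control the commutator on $\ell^q$, I expect to arrive at an estimate of the form
\begin{equation*}
  C\|c\|_q^{p_0}\le\|Ac\|_q^{p_0}+\varepsilon(R)\|c\|_q^{p_0},
\end{equation*}
with $\varepsilon(R)\to 0$ as $R\to\infty$. Choosing $R$ large enough absorbs the error and delivers~\eqref{eq:c5} for $q<\infty$; the case $q=\infty$ is then recovered by a truncation-and-limit argument from the bounds just obtained. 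The main obstacle lies in the quasi-Banach commutator estimate: multiplication by $|z|$ need not preserve the amalgam $W(C_b,L^{p_0})$ when $p_0<1$, so one must either smooth $H$ first or split $A=A_{\le R}+A_{>R}$ into a band-limited part, on which the commutator estimate is classical, and a tail of small $\mathcal{C}^{p_0}$-quasi-norm. Keeping all the quasi-norm constants uniform in the index $n$ through this decomposition is the central technical point.
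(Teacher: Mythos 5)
Your overall scheme (partition of unity at large scale, commutator identity, absorption of a small error) is exactly the Sj\"ostrand-type localization the paper uses, but two of the steps as you describe them would fail. First, the exponent bookkeeping: raising the local inequality to the power $p_0$ and summing over $n$ produces the mixed quantity $\sum_n\norm{M_{\varphi_n}(Ac)}_p^{p_0}$, an $\ell^{p_0}(\ell^p)$-type expression that is \emph{not} dominated by $\norm{Ac}_q^{p_0}$ (take $Ac$ with one unit entry in each of $M$ blocks: the sum grows like $M$ while $\norm{Ac}_q^{p_0}=M^{p_0/q}$), so your target inequality $C\norm{c}_q^{p_0}\le\norm{Ac}_q^{p_0}+\varepsilon(R)\norm{c}_q^{p_0}$ does not come out of this summation. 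The functional in which the absorption must be run is $\sum_k\norm{\varphi^\varepsilon_k\,\cdot\,}_p^q$, i.e.\ exponent $q$, with the equivalence of Lemma~\ref{lemma:Normequivalence2} applied to \emph{both} sides (so that the $R$-dependent conversion constants cancel and never compete with the smallness of the error). This forces a genuine case distinction which your uniform treatment misses: for $q<p\le 1$ one raises to the power $q/p\le 1$ and uses subadditivity (Lemma~\ref{plone}), but for $q>p$ the exponent $q/p>1$ destroys subadditivity, and the paper instead treats the family of local inequalities as a vector inequality $a\le b+KV^{\varepsilon}a$ and absorbs via the Schur test in $\ell^{q/p}$ (Proposition~\ref{Prop:SchurTestBigger1}).

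Second, the commutator error. You control $\norm{[A,M_{\varphi_n}]c}_p$ by invoking Proposition~\ref{Prop:ConvDomOpCont}, i.e.\ by an operator norm acting on the full sequence $c$; summing such bounds over the infinitely many $n$ diverges, and the bounded overlap of the cover does not help, because $[A,M_{\varphi_n}]$ does not annihilate the part of $c$ far from block $n$ (its entries merely decay). The indispensable device in the paper is to re-localize the error, writing $[A,\varphi^\varepsilon_k]c=\sum_j[A,\varphi^\varepsilon_k]\varphi^\varepsilon_j(\Phi^\varepsilon)^{-1}\varphi^\varepsilon_j c$, so that the error is controlled by $\sum_j V^{\varepsilon,p}_{j,k}\norm{\varphi^\varepsilon_j c}_p^p$ with a matrix $(V^{\varepsilon,p}_{j,k})$ whose Schur-type norm tends to $0$ as $\varepsilon\to 0^+$ (Lemmas~\ref{lemma:commutator}--\ref{lemma:ConvergenceOfShurNormOfVepsilon}); only in that form can the error be absorbed inside the mixed norm. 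Finally, the obstacle you flag about $|z|H(z)$ possibly leaving the amalgam space is resolved more simply than by smoothing or band-splitting: the commutator envelope is $H(z)\min\{1,\varepsilon|z|\}$, which is dominated by $H$ and tends to $0$ pointwise, so dominated convergence in $W(C_b,L^{p_0/p})$ already yields the required smallness (Lemma~\ref{lemma:ConvergenceOfVepsilon}), with no rate in $\varepsilon$ needed precisely because the absorption never has to beat any $R$-dependent constant.
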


In other words, if $A$ in Theorem
\ref{Thm:LowerBound} is bounded from below on
some  $\ell ^p$ with $p\ge p_0$, then $A$
is bounded from below on $\ell ^q$ for all $q\in 
[p_0,\infty ]$.  Note that \eqref{eq:c4} is equivalent to saying that
$A$ is one-to-one on $\ell ^p(\Lambda )$ with closed range in $\ell
^p(\Pi )$. Thus if $A$ is  one-to-one  with closed range for
\emph{some} $p\in [p_0,\infty ]$, then it is one-to-one  with closed range for
\emph{all} $p\in [p_0,\infty ]$. 

Note that by \eqref{eq:Matrixnorm} $A$ in Theorem \ref{Thm:LowerBound}
is bounded  from $\ell ^p(\Pi )$
to $\ell ^p(\Lambda )$ for all $p\geq p_0$, 
hence the estimates \eqref{eq:c4}
and \eqref{eq:c5} really make sense.

The  proof of Theorem \ref{Thm:LowerBound} is 
modelled on  Sj\"ostrand's treatment of
Wiener's lemma for convolution-dominated
matrices.  It exploits the flexibility of 
Sj\"ostrand's methods to transfer lower
bounds for a matrix  from one value of
$p$ to all others.

\begin{rem}\label{rem:InvarianceLowerBound}
	The proof of the previous Proposition for $p_0=1$ can be found in  \cite[Proposition 8.1]{GOR15}. Hence we can restrict ourselves to the case $p_0<1$.	
  The cases to be considered are: 
\begin{itemize}
\item[(i)] from $p \geq 1$ to $q \geq 1$;

\item[(ii)] from $p \leq 1$ to $q<p$;

\item[(iii)] from $p\geq 1$ to $q < 1$;

\item[(iv)]  from $p< 1$ to $p< q$.
\end{itemize}
Case (iii) is a consequence of  cases (i) and (ii). If  $p\geq 1$ we can  assume $p=1$ on account of case (i). For $p=1$ the statement of case (iii) is included in case (ii). 
\end{rem}

We need some preparations for the proof. First let $\varphi \in
C^\infty (\mathbb R^d)$ with $0\le \varphi \le 1$ and $\supp (\varphi )
\subseteq B_2(0)$ be such that $\{ \varphi (\cdot -k)\}_{k\in \mathbb Z^d}$
is a partition of unity. Set $\varphi ^\varepsilon _k(x):=\varphi (\varepsilon x-k),\ 
x,y\in \mathbb R^d$. Then
\begin{equation}
\begin{alignedat}{1}
\sum _{k\in \mathbb Z^d}\varphi ^\varepsilon _k &= 1,
\quad
\abs{\varphi^\varepsilon_k(x) - \varphi^\varepsilon_k(y)}
\lesssim \varepsilon \abs{x-y},
\quad
0 \leq \varphi^\varepsilon_k \leq 1
\quad \text{and}
\\[1ex]
\Phi^\varepsilon &:= \sum_{k \in \zd}
\left( \varphi^\varepsilon_k \right)^2 \asymp 1 \, .
\end{alignedat}
\end{equation}

By combining these properties, we obtain
\begin{align}
\label{eq_improved_lip}
\abs{\varphi^\varepsilon_k(x) - \varphi^\varepsilon_k(y)}
\lesssim \min\{1, \varepsilon |x-y|\}, \qquad x,y\in \rd . 
\end{align}
For $k \in \zd$ and $\varepsilon >0$ let $\varphi^\varepsilon_k
c := {\varphi^\varepsilon_k}\big| _{\Pi} \cdot c, c=(c_{\rho})_{\rho \in \Pi}$, denote the multiplication operator $\varphi ^\varepsilon _k$. 
This multiplication operator enables us to get equivalent norms for
sequence spaces. 

\begin{lemma}\label{lemma:Normequivalence1}
	Let $\varepsilon >0$ and $\Lambda \subseteq \mathbb{R}^{2d}$ be
	relatively separated. Then for $0<q\leq \infty$ we get
	\begin{align}
\label{eq_sj_qq}
\Big(\sum_{k \in \zd} \norm{\varphi^\varepsilon_k a}_q^q\Big)^{1/q} 
\asymp \norm{a}_q,\qquad a\in \ell^q(\Pi ),
\end{align}
with the usual modifications in case $q=\infty$. 
\end{lemma}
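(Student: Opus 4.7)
The plan is to reduce the equivalence to a uniform pointwise estimate on the weights $\sum_{k}(\varphi^\varepsilon_k(\rho))^q$. Since $\varphi^\varepsilon_k \geq 0$, Tonelli's theorem justifies interchanging the order of summation for $0<q<\infty$:
$$\sum_{k \in \zd} \norm{\varphi^\varepsilon_k a}_q^q = \sum_{\rho \in \Pi} |a(\rho)|^q \, \Psi^\varepsilon_q(\rho), \qquad \Psi^\varepsilon_q(\rho) := \sum_{k \in \zd} \bigl(\varphi^\varepsilon_k(\rho)\bigr)^q.$$
Thus \eqref{eq_sj_qq} reduces to proving $c_q \leq \Psi^\varepsilon_q(\rho) \leq C_q$ uniformly in $\rho$.

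The key geometric fact is a uniform finite-overlap bound: $\varphi^\varepsilon_k(\rho) \neq 0$ forces $\varepsilon\rho - k \in B_2(0)$, so for each fixed $\rho$ the number of contributing indices $k \in \zd$ is bounded by a constant $N = N(d)$, \emph{independent of $\varepsilon$}. Combined with $0 \leq \varphi^\varepsilon_k \leq 1$ this yields $\Psi^\varepsilon_q(\rho) \leq N$ for every $q > 0$; for $q \geq 1$ the estimate $(\varphi^\varepsilon_k)^q \leq \varphi^\varepsilon_k$ and the partition of unity property improve this to $\Psi^\varepsilon_q(\rho) \leq 1$.

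For the lower bound I would split at $q = 1$. When $0 < q \leq 1$, the inequality $x^q \geq x$ on $[0,1]$ combined with $\sum_k \varphi^\varepsilon_k(\rho) = 1$ gives $\Psi^\varepsilon_q(\rho) \geq 1$ directly. When $q \geq 1$, H\"older's inequality applied to the at most $N$ nonzero entries produces
$$1 = \sum_{k : \varphi^\varepsilon_k(\rho) \neq 0} \varphi^\varepsilon_k(\rho) \leq \Big( \sum_k \bigl(\varphi^\varepsilon_k(\rho)\bigr)^q \Big)^{1/q} N^{1/q'},$$
so that $\Psi^\varepsilon_q(\rho) \geq N^{1-q}$. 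In either case the constants depend only on $d$ and $q$, not on $\varepsilon$ or $\rho$.

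The case $q=\infty$ I would handle separately: the estimate $\sup_k \norm{\varphi^\varepsilon_k a}_\infty \leq \norm{a}_\infty$ follows from $0 \leq \varphi^\varepsilon_k \leq 1$, and the reverse bound uses that for each $\rho \in \Pi$ one of the at most $N$ non-vanishing $\varphi^\varepsilon_k(\rho)$ satisfies $\varphi^\varepsilon_k(\rho) \geq 1/N$, hence $\sup_k \norm{\varphi^\varepsilon_k a}_\infty \geq N^{-1}\norm{a}_\infty$. The entire argument is routine once the finite-overlap property is established, which is the only point requiring any real thought; the remaining work is a bookkeeping exercise distinguishing the ranges $q \leq 1$ and $q \geq 1$ so that the partition-of-unity identity can be leveraged in the correct direction.
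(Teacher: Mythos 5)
Your proof is correct and follows essentially the same route as the paper: after interchanging the sums, everything rests on the uniform two-sided bound for $\sum_{k}\varphi^\varepsilon_k(\rho)^q$ obtained from the finite-overlap constant ($\eta$ in the appendix, your $N$) together with the partition-of-unity identity, which is exactly the argument the paper gives for $q<1$ in Appendix~A. The only difference is that you also carry out the cases $q\ge 1$ and $q=\infty$ (via H\"older on the at most $N$ nonzero terms), which the paper simply delegates to \cite{GOR15}.
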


For $1\leq q\leq \infty$, Lemma
\ref{lemma:Normequivalence1} was already 
proved in \cite{GOR15}, and the
other cases are obtained by similar
arguments. For completeness 
we present a proof for $q<1$
in Appendix \ref{App:A}.

\begin{lemma}\label{lemma:Normequivalence2}
	Let $0<p_0 \leq 1$,
	$p,q \in [p_0, \infty]$, $\varepsilon >0$
	and $\Lambda \subseteq \mathbb{R}^{2d}$ be relatively separated. Then
	\begin{align}
\label{eq_sj_b}
\sum_{k \in \zd} \norm{\varphi^\varepsilon_k a}_p^q 
\asymp \norm{a}_q^q ,
\qquad a \in \ell^q(\Pi ),
\end{align}
with the usual modifications in case $p=\infty$ or $q=\infty$. The
constants in \eqref{eq_sj_b} are independent of $p,q$,  but depend on $\varepsilon$.
\end{lemma}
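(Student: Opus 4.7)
The plan is to reduce Lemma \ref{lemma:Normequivalence2} to the already-established Lemma \ref{lemma:Normequivalence1} via a finite-dimensional norm equivalence, exploiting the fact that for each $k\in\zd$, the support of $\varphi^\varepsilon_k$ contains only a bounded number of points of the relatively separated set $\Pi$.

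First I would observe that $\varphi^\varepsilon_k$ is supported in $B_{2/\varepsilon}(k/\varepsilon)$, so the restriction ${\varphi^\varepsilon_k}|_\Pi$ is supported on the finite set $S_k := \Pi \cap B_{2/\varepsilon}(k/\varepsilon)$. Because $\Pi$ is relatively separated, a standard covering argument gives a uniform bound
\[
  \# S_k \leq N_\varepsilon := C_d\,(2/\varepsilon+1)^d\,\operatorname{rel}(\Pi),
\]
independent of $k$. On a vector supported in a set of cardinality at most $N_\varepsilon$, any two quasi-norms $\|\cdot\|_p$ and $\|\cdot\|_q$ with $p,q\in (0,\infty]$ are equivalent, with comparability constants depending only on $N_\varepsilon$. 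Concretely,
\[
  c_{p,q,\varepsilon}\,\|\varphi^\varepsilon_k a\|_q \;\leq\; \|\varphi^\varepsilon_k a\|_p \;\leq\; C_{p,q,\varepsilon}\,\|\varphi^\varepsilon_k a\|_q, \qquad k\in \zd,
\]
by the elementary chain $\|x\|_\infty \le \|x\|_r \le N_\varepsilon^{1/r}\|x\|_\infty$ for $0<r<\infty$ (with the usual convention for $r=\infty$).

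Raising the previous estimate to the $q$-th power and summing over $k\in\zd$ yields
\[
  \sum_{k\in\zd} \|\varphi^\varepsilon_k a\|_p^q \;\asymp\; \sum_{k\in\zd} \|\varphi^\varepsilon_k a\|_q^q,
\]
with constants depending on $\varepsilon$ but not on $p$ or $q$ (through the uniform bound on $\#S_k$). Applying Lemma \ref{lemma:Normequivalence1} to the right-hand side gives
\[
  \sum_{k\in\zd} \|\varphi^\varepsilon_k a\|_q^q \;\asymp\; \|a\|_q^q,
\]
and chaining the two equivalences proves \eqref{eq_sj_b}. The case $q=\infty$ is handled analogously by replacing the sum with a supremum and using $\|\varphi^\varepsilon_k a\|_p \asymp \|\varphi^\varepsilon_k a\|_\infty$ with constants depending on $N_\varepsilon$.

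I do not anticipate serious obstacles: the only point requiring care is keeping track of which constants depend on $\varepsilon$ (via $N_\varepsilon$) and verifying that the resulting comparability constants remain independent of $p$ and $q$, which follows because $N_\varepsilon$ is fixed once $\varepsilon$ and $\operatorname{rel}(\Pi)$ are. The cases $p=\infty$ or $q=\infty$ are minor bookkeeping and do not introduce additional difficulty.
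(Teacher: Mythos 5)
Your proposal is correct and follows essentially the same route as the paper: bound uniformly the number of points of $\Pi$ in the support of each $\varphi^\varepsilon_k$, use the resulting equivalence of $\ell^p$ and $\ell^q$ quasi-norms on vectors of bounded support (with the uniform constant $N^{1/p_0}$ coming from $p,q\geq p_0$), and then invoke Lemma \ref{lemma:Normequivalence1}. No substantive difference from the paper's argument.
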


In the  case $p,q \geq 1$ the claim  was already shown in \cite{GOR15}. 

\begin{proof}
Let $p,q \in [p_0, \infty]$ be arbitrary. 
	For fixed $\varepsilon >0$ we get
\begin{align*}
N := \sup _{k\in \zd}\# \supp({\varphi^\varepsilon_k}\big| _{\Pi}) 
= \sup_{k\in \zd} \#\set{\rho \in \Pi}{\varphi^\varepsilon_k(\rho) \not= 0} 
< \infty.
\end{align*}
Since  $q \geq p_0$ we have 
\begin{align*}
\norm{\varphi^\varepsilon_k a}_q
\leq \norm{\varphi^\varepsilon_k a}_{p_0}
\leq N^{1/p_0} \norm{\varphi^\varepsilon_k a}_\infty
\leq N^{1/p_0} \norm{\varphi^\varepsilon_k a}_p,
\qquad a \in \ell^\infty(\Pi ),
\end{align*}
and similarly
\begin{align*}
\norm{\varphi^\varepsilon_k a}_p
\leq N^{1/p_0}  \norm{\varphi^\varepsilon_k a}_q,
\qquad a \in \ell^\infty(\Pi ).
\end{align*}
As a consequence we obtain for $q \neq \infty$,
\begin{align}
\label{eq_sj_pq}
\Big( \sum_{k \in \zd} \norm{\varphi^\varepsilon_k a}_p^q \Big)^{1/q}
\asymp
\Big( \sum_{k \in \zd} \norm{\varphi^\varepsilon_k a}_q^q \Big)^{1/q},
\qquad a \in \ell^q(\Pi ).
\end{align}
with constants depending only on the minimal index 
$p_0$ and $\varepsilon$, but not on
$p$ and $q$. 

An application of Lemma 
\ref{lemma:Normequivalence1} on
\eqref{eq_sj_pq} yields the claim. The 
corresponding statement for $q=\infty$  follows 
similarly. 
\end{proof}

The technical  part of the proof consists of precise estimates for 
the Schur-type norms 
\begin{alignat}{2}
V^{\varepsilon,p}_{j,k} &:=
\norm{[A,\varphi^\varepsilon_k]\varphi
^\varepsilon_j}_{S-p}, &
\qquad 0<p\le 1,\  j,k &\in \zd 
\label{eq_def_V}
\intertext{and}
V^{\varepsilon }_{j,k} &:=
\norm{[A,\varphi^\varepsilon_k]\varphi
^\varepsilon_j}_{\Schur}, &
\qquad  j,k &\in \zd .
\label{eq_def_V2}
\end{alignat}
of the commutator $[A,\varphi^\varepsilon_k]
=
A \varphi ^\varepsilon _k - \varphi
^\varepsilon _k A$, when
$A \in \mathcal{C}^{p_0}(\Lambda, \Pi )$
and $\varphi ^\varepsilon _k$
is considered as a multiplication
operator.

\begin{lemma}\label{lemma:commutator}
	Let $\varepsilon >0$, $p_0\in (0,1]$,
	$p,q\in (p_0,\infty ]$
	be such that
	$ q \leq p $,
	$\Lambda ,\Pi \subset \rd$ be relatively
	separated and 
	suppose that $A=(a_{\lambda, \rho})_{\lambda \in \Lambda, \rho \in \Pi}
	\in C^{p_0}(\Lambda ,\Pi )$.
	Also let
	$V^{\varepsilon,p}_{j,k}$ and
	$V^{\varepsilon }_{j,k}$ be the Schur norms  given
	by \eqref{eq_def_V} and \eqref{eq_def_V2},
	$K := \max _x \Phi ^\varepsilon (x)^{-\min (1,p)}$. 
Assume 	that 
\begin{align*}
\norm{c}_p \leq \norm{Ac}_p, \qquad
c \in \ell^p(\Pi ).
\end{align*}
Then the following are true:
\begin{enumerate}
\item[(i)] If $p\le 1$, then
\begin{equation}
  \label{eq:h1}
  \norm{\varphi^\varepsilon_k c}_p^q \leq 
\norm{\varphi^\varepsilon_k Ac}_p ^q+
K^{q/p}  \sum_{j\in \zd} (V^{\varepsilon,p}_{j,k})^{q/p}
\norm{\varphi^\varepsilon_jc}_p^q, \qquad c\in
\ell^p(\Pi )\, .
\end{equation}

\vspace{0.1cm}

\item[(ii)] If $p> 1$, then
\begin{equation}
  \label{eq:h2}
  \norm{\varphi^\varepsilon_k c}_p \leq 
\norm{\varphi^\varepsilon_k Ac}_p+
K  \sum_{j\in \zd} V^{\varepsilon }_{j,k} \, 
\norm{\varphi^\varepsilon_jc}_p, \qquad c\in
\ell^p(\Pi )\, .
\end{equation}
\end{enumerate}
\end{lemma}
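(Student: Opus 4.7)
The plan is to exploit the lower bound hypothesis on $A$ together with Sj{\"o}strand's localization trick based on $\{\varphi^\varepsilon_k\}$. The starting point is that the hypothesis applied to the truncated sequence $\varphi^\varepsilon_k c\in \ell^p(\Pi)$ gives $\|\varphi^\varepsilon_k c\|_p \leq \|A(\varphi^\varepsilon_k c)\|_p$. I would then split
\begin{equation*}
A \varphi^\varepsilon_k c \;=\; \varphi^\varepsilon_k Ac + [A,\varphi^\varepsilon_k] c,
\end{equation*}
so that the first term contributes the desired $\|\varphi^\varepsilon_k Ac\|_p$. To handle the commutator, I would use the identity $1 = (\Phi^\varepsilon)^{-1}\sum_j (\varphi^\varepsilon_j)^2$, valid since $\Phi^\varepsilon \asymp 1$, to localize it as
\begin{equation*}
[A,\varphi^\varepsilon_k]c \;=\; \sum_{j \in \zd} [A,\varphi^\varepsilon_k]\varphi^\varepsilon_j \bigl( (\Phi^\varepsilon)^{-1} \varphi^\varepsilon_j c\bigr),
\end{equation*}
so that precisely the operators $[A,\varphi^\varepsilon_k]\varphi^\varepsilon_j$ appearing in the definitions of $V^{\varepsilon,p}_{j,k}$ and $V^\varepsilon_{j,k}$ show up.

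For case (i), $p \le 1$, I would apply Lemma \ref{plone}(i) in the form $|x+y|^p \le |x|^p + |y|^p$ to deduce
\begin{equation*}
\|\varphi^\varepsilon_k c\|_p^p \;\leq\; \|A \varphi^\varepsilon_k c\|_p^p \;\leq\; \|\varphi^\varepsilon_k Ac\|_p^p + \|[A,\varphi^\varepsilon_k]c\|_p^p.
\end{equation*}
Next I would apply Lemma \ref{plone}(ii) to the localized commutator, bound each summand by Proposition \ref{Prop:SchurTestSmaller1} in the form $\|Md\|_p^p \le \|M\|_{S-p}\,\|d\|_p^p$, and use the pointwise estimate $|(\Phi^\varepsilon)^{-1}|^p \le K$ to extract $K$, arriving at
\begin{equation*}
\|\varphi^\varepsilon_k c\|_p^p \;\leq\; \|\varphi^\varepsilon_k Ac\|_p^p + K \sum_{j \in \zd} V^{\varepsilon,p}_{j,k} \,\|\varphi^\varepsilon_j c\|_p^p .
\end{equation*}
Since $q/p \leq 1$, raising both sides to the power $q/p$ and applying the subadditivity $(a+b)^{q/p} \le a^{q/p} + b^{q/p}$ termwise inside the $j$-sum yields exactly (i).

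For case (ii), $p > 1$, the scheme is structurally identical but with the ordinary triangle inequality replacing $p$-subadditivity, with Proposition \ref{Prop:SchurTestBigger1} supplying $\|Md\|_p \le \|M\|_{\Schur}\,\|d\|_p$, and with the linear pointwise estimate $|(\Phi^\varepsilon)^{-1}| \le K$ (since now $\min(1,p) = 1$). No iteration to the exponent $q/p$ is needed, because the statement is formulated directly in the $p$-norm. I do not expect a conceptual obstacle; the main technical point is the careful bookkeeping: one must match each Schur-type quantity with the correct operator-norm bound on $\ell^p$, and confirm that the $V^{\varepsilon,p}_{j,k}$ and $V^\varepsilon_{j,k}$ are finite (which is inherited from $A \in \mathcal{C}^{p_0}(\Lambda,\Pi)$ together with the bounded support and uniformly bounded overlap of the $\varphi^\varepsilon_j$), so that the exponent on $(\Phi^\varepsilon)^{-1}$ produces exactly the constant $K = \max_x \Phi^\varepsilon(x)^{-\min(1,p)}$ stated in the lemma.
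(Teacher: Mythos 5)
Your proposal is correct and follows essentially the same route as the paper's proof: apply the lower bound to $\varphi^\varepsilon_k c$, split off the commutator, localize it via $\sum_j(\varphi^\varepsilon_j)^2(\Phi^\varepsilon)^{-1}=1$, bound each piece with the $S\text{-}p$ (resp.\ Schur) norm and the constant $K$, and finally raise to the power $q/p\le 1$ using $p$-subadditivity for (i). Part (ii) is the same computation with the ordinary triangle inequality, exactly the argument from \cite{GOR15} that the paper cites.
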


\begin{proof}
(i) We apply the triangle inequality for $p\leq 1$ (Lemma~\ref{plone}(ii))
and obtain 
\begin{align}
\nonumber
\norm{\varphi^\varepsilon_k c}_p^p & \leq \norm{A \varphi^\varepsilon_k c}_p^p
\leq \norm{\varphi^\varepsilon_k Ac}_p^p + \norm{[A,\varphi^\varepsilon_k] c}_p^p
\\
\nonumber
& \leq
\norm{\varphi^\varepsilon_k Ac}_p ^p+
 \sum_{j\in \zd} \norm{[A,\varphi^\varepsilon_k]\varphi^\varepsilon_j (\Phi^\varepsilon)^{-1} \varphi^\varepsilon_jc}_p^p
\\
\nonumber
& \leq
\norm{\varphi^\varepsilon_k Ac}_p ^p+
K \sum_{j\in \zd} V^{\varepsilon,p}_{j,k}
\norm{\varphi^\varepsilon_jc}_p^p\, .
\end{align}
Claim (i) now follows by raising this inequality  to the power
$q/p \leq 1$ and applying Lemma~\ref{plone}(i).

Assertion (ii) was proved  in \cite{GOR15}, Equ.~(36) with the same argument.
\end{proof}

Next we consider the matrix $V^\varepsilon $
with entries $(V^{\varepsilon,p} _{j,k})^{q/p}, j,k\in \zd $ in case
$p \leq 1$  and estimate its  $q/p$-Schur norm as $\varepsilon \to
0+$.  First we  prove the convergence of the entries of $V^\varepsilon
$.

\begin{lemma}\label{lemma:ConvergenceOfVepsilon}
Suppose that the hypothesis of Lemma \ref{lemma:commutator} hold true. Then for $\varepsilon \longrightarrow 0^+$
\begin{align}
\label{eq_V_unif}
\sup_{j,k \in \zd} V^{\varepsilon,p}_{j,k} 
\longrightarrow 0
\text{ if }
p\leq 1
\quad \text{ and } 
\quad 
\sup_{j,k \in \zd} V^{\varepsilon }_{j,k} 
\longrightarrow 0
\text{ if } 
p> 1.
%
\end{align}
\end{lemma}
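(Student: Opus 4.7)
My plan is to bound both $V^{\varepsilon,p}_{j,k}$ and $V^{\varepsilon}_{j,k}$ by a single quantity that depends only on $\varepsilon$ and then to show that this quantity tends to $0$ as $\varepsilon\to 0^+$. Since $\varphi^\varepsilon_k$ acts by pointwise multiplication on $\ell^{p}(\Lambda )$ and on $\ell^{p}(\Pi )$, the entries of the commutator are
\begin{align*}
\bigl([A,\varphi^\varepsilon_k]\varphi^\varepsilon_j\bigr)_{\lambda,\rho}
\,=\, a_{\lambda,\rho}\bigl(\varphi^\varepsilon_k(\rho)-\varphi^\varepsilon_k(\lambda)\bigr)\varphi^\varepsilon_j(\rho).
\end{align*}
Combining the envelope bound $|a_{\lambda,\rho}|\le H(\lambda-\rho)$, the improved Lipschitz estimate \eqref{eq_improved_lip}, and $0\le \varphi^\varepsilon_j\le 1$ gives the pointwise estimate
\begin{align*}
\bigl|\bigl([A,\varphi^\varepsilon_k]\varphi^\varepsilon_j\bigr)_{\lambda,\rho}\bigr|
\,\lesssim\, H(\lambda-\rho)\,\min\{1,\varepsilon|\lambda-\rho|\},
\end{align*}
with implicit constant independent of $j,k$. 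Inserting this into \eqref{eq_def_V} and \eqref{eq_def_V2} yields $V^{\varepsilon,p}_{j,k}\lesssim \Psi^{(p)}_\varepsilon$ for $p\le 1$ and $V^{\varepsilon}_{j,k}\lesssim \Psi^{(1)}_\varepsilon+\widetilde\Psi^{(1)}_\varepsilon$ for $p>1$, where
\begin{align*}
\Psi^{(s)}_\varepsilon\,:=\,\sup_{\rho\in\Pi}\sum_{\lambda\in\Lambda} H(\lambda-\rho)^s\,\min\{1,\varepsilon|\lambda-\rho|\}^s,
\end{align*}
and $\widetilde\Psi^{(s)}_\varepsilon$ is the analogous quantity with $\sup_{\lambda\in \Lambda}$ and $\sum_{\rho\in \Pi}$ interchanged. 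Since these upper bounds are independent of $j$ and $k$, it suffices to prove $\Psi^{(s)}_\varepsilon,\widetilde\Psi^{(s)}_\varepsilon\to 0$ as $\varepsilon\to 0^+$ for $s\in\{p,1\}$.

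To show this I use a split-at-radius argument. Fix $\delta>0$ and decompose the sum at $R>0$. On the local part $|\lambda-\rho|\le R$, I bound $\min\{1,\varepsilon|\lambda-\rho|\}^s\le (\varepsilon R)^s$, so this contribution is at most $(\varepsilon R)^s\cdot\sup_{\rho}\sum_{|\lambda-\rho|\le R}H(\lambda-\rho)^s$. Because $H\in W(C_b,L^{p_0})\subseteq L^\infty$ and $s\ge p_0$, the pointwise inequality $H^{s}\le \|H\|_\infty^{s-p_0}H^{p_0}$ together with Lemma~\ref{ConnWienAmalgSpaceSequenceSpace2} and the translation invariance of $W(C_b,L^{p_0})$ bound this supremum by a finite constant $C(H,\Lambda,s)$ independent of $\rho$. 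On the tail $|\lambda-\rho|>R$, $\min\{1,\varepsilon|\lambda-\rho|\}^s\le 1$, and after substituting $\mu=\lambda-\rho$ (the relative separation constant of $\Lambda-\rho$ does not depend on $\rho$), the tail sum is controlled by a constant multiple of $\int_{|x|>R-C_K}\|H\|_{L^\infty(x+K)}^{p_0}\,dx$, which tends to $0$ as $R\to\infty$ uniformly in $\rho$ because $H\in W(C_b,L^{p_0})$. Choosing $R$ large enough so that the tail part is $<\delta/2$ and then $\varepsilon$ small enough so that the local part is $<\delta/2$ gives $\Psi^{(s)}_\varepsilon<\delta$.

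The main technical obstacle is ensuring that every supremum is controlled \emph{uniformly} in $\rho$ (and, for $\widetilde\Psi$, in $\lambda$), since this uniformity is precisely what upgrades pointwise convergence of $V^{\varepsilon}_{j,k}$ to the uniform convergence in $j,k$ required by \eqref{eq_V_unif}. This uniformity is obtained from the translation invariance of the amalgam norm $\|\cdot\|_{W(C_b,L^{p_0})}$ together with the fact that the translates $\Lambda-\rho$ and $\lambda-\Pi$ have relative separation constants independent of the translation parameter. The quantity $\widetilde\Psi^{(s)}_\varepsilon$ is handled by exactly the same argument with the roles of $\lambda$ and $\rho$ interchanged, which completes the outline.
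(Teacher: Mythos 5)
Your proposal is correct and follows essentially the same route as the paper: the same commutator entry formula, the same pointwise bound $|([A,\varphi^\varepsilon_k]\varphi^\varepsilon_j)_{\lambda,\rho}|\lesssim H(\lambda-\rho)\min\{1,\varepsilon|\lambda-\rho|\}$ via the envelope and \eqref{eq_improved_lip}, and a resulting $j,k$-independent Schur-type bound whose vanishing as $\varepsilon\to 0^+$ is guaranteed by the decay of $H\in W(C_b,L^{p_0})$. The only differences are cosmetic: the paper packages the bound as $\|H^{\varepsilon,p}\|_{W(C_b,L^{p_0/p})}$ and invokes dominated convergence (and handles $p>1$ by citing \cite{GOR15}), whereas you finish with an explicit split-at-radius argument and treat $p>1$, including the column-sum part of the Schur norm, directly.
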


\begin{proof}
The case $p\geq  1$ of \eqref{eq_V_unif} was proved in (38) of
\cite{GOR15}. The necessary adaptions for the proof of case $p \leq 1$
are as follows. 
We first note that the matrix entries of
$[A,\varphi^\varepsilon_k]
\varphi^\varepsilon_j$, for  $j,k \in \zd$,  are
\begin{align*}
([A,\varphi^\varepsilon_k]\varphi^\varepsilon_j)_{\lambda, \rho} = 
-a_{\lambda,\rho} \varphi^\varepsilon_j(\rho)(\varphi^\varepsilon_k(\lambda)-\varphi^\varepsilon_k(\rho)),
\qquad \rho \in \Pi , \lambda \in \Lambda.
\end{align*}
Using an envelope  $H \in W(C_b,L^{p_0})$ of $A$ and estimate 
\eqref{eq_improved_lip} for $\varphi _k^\varepsilon $, we bound the
entries of the commutator by 
\begin{align*}
&\abs{([A,\varphi^\varepsilon_k]\varphi^\varepsilon_j)_{\lambda,\rho}}^p
\lesssim
H(\lambda- \rho)^p
\min\{1, \varepsilon |\lambda - \rho |\}^p \, .
\end{align*}
Hence, if we define $H^{\varepsilon,p}(x) := H(x)^p
\min\{1,\varepsilon\abs{x}\}^p$, then by choice of $H$,
\begin{align*}
V^{\varepsilon,p}_{j,k} = 
\norm{[A,\varphi^\varepsilon_k]\varphi^\varepsilon_j}_{S-p}
\lesssim \rel(\Lambda ) ^{p /p_0 }  \norm{H^{\varepsilon,p}}_{W(C_b,L^{p_0/p })}.
\end{align*}
Since
{$H \in W(C_b,L^{p_0})$}
and $p_0 \leq p\leq 1$, it
follows with dominated convergence that 
$\norm{H^{\varepsilon,p}}_{W(C_b,L^{p_0/p} )} \longrightarrow 0$, as $\varepsilon \longrightarrow 0^+$.
This proves \eqref{eq_V_unif} for the case $p\leq 1$.
\end{proof}

Next we shall estimate
$V^{\varepsilon,p}_{j,k}$ in terms of
\begin{align}
\label{eq_def_menv}
\menv^{\varepsilon,q}(s) := \sum_{t \in \Zdst : 
\abs{\varepsilon
t-s}_\infty \leq 5} \sup_{z\in[0,1]^{d}+t} 
\abs{H (z) } ^q\, ,
\qquad s\in \mathbb Z^d. 
\end{align}
First we have the following.

\begin{lemma} \label{lemma:ConvergenceDelta}
Let $0<p_0 \leq q \leq 1$, $H \in W(C_b,L^{ p_0 })$ and
$\menv^{\varepsilon,q}(s)$ be given by
\eqref{eq_def_menv}.
Then 
\begin{align}
\label{bound_menv}
\sum_{s\in\Zdst,\abs{s}>6\sqrt{d}} \menv^{\varepsilon,q}(s) 
\longrightarrow 0,
\mbox{ as }\varepsilon \longrightarrow 0^+.
\end{align}
\end{lemma}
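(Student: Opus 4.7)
The plan is to swap the order of summation in
\[
\sum_{s \in \zd,\, |s| > 6\sqrt{d}} \menv^{\varepsilon,q}(s)
= \sum_{s \in \zd,\, |s| > 6\sqrt{d}} \sum_{t \in \zd :\, |\varepsilon t - s|_\infty \leq 5} c_t^q,
\]
where I abbreviate $c_t := \sup_{z \in [0,1]^d + t} |H(z)|$ for the local supremum of $|H|$ on the unit cube translated to $t$, and then to reduce the matter to a tail-sum estimate for the single sequence $(c_t^q)_{t \in \zd}$.

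The first step is the geometric observation that the constants $5$ and $6\sqrt{d}$ conspire to push the contributing $t$'s far from the origin. Indeed, $|s| > 6\sqrt{d}$ implies $|s|_\infty \geq |s|/\sqrt{d} > 6$, and the inner condition $|\varepsilon t - s|_\infty \leq 5$ then forces $|\varepsilon t|_\infty \geq |s|_\infty - 5 > 1$, hence $|t|_\infty > 1/\varepsilon$. Thus only indices $t$ with $|t|_\infty > 1/\varepsilon$ contribute. After exchanging the order of summation, for each such $t$ there are at most $11^d$ values of $s \in \zd$ with $|\varepsilon t - s|_\infty \leq 5$, giving
\[
\sum_{|s| > 6\sqrt{d}} \menv^{\varepsilon,q}(s) \leq 11^d \sum_{t \in \zd :\, |t|_\infty > 1/\varepsilon} c_t^q.
\]

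The second step exploits the hypothesis $H \in W(C_b, L^{p_0})$. By the equivalence of Wiener amalgam quasi-norms for different choices of the compact set $K$, with $K = [0,1]^d$ in particular, the sequence $(c_t)_{t \in \zd}$ belongs to $\ell^{p_0}(\zd)$ (this is the discretization used in Lemma \ref{ConnWienAmalgSpaceSequenceSpace2}). Since $X = C_b$, the function $H$ is in particular bounded, so $c_t \leq \|H\|_\infty$; combined with $p_0 \leq q \leq 1$ this yields $c_t^q \leq \|H\|_\infty^{q-p_0} c_t^{p_0}$, and hence $(c_t^q)_{t\in \zd} \in \ell^1(\zd)$. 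Absolute convergence of this $\ell^1$ series then gives
\[
\sum_{t \in \zd :\, |t|_\infty > 1/\varepsilon} c_t^q \longrightarrow 0 \qquad \text{as } \varepsilon \to 0^+,
\]
which combined with the bound from the first step proves \eqref{bound_menv}.

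The main technical point to handle carefully is the bookkeeping with the constants $5$ and $6\sqrt{d}$: one must verify that this particular pairing really forces $|t|_\infty \to \infty$ uniformly in the relevant indices $s$ as $\varepsilon \to 0^+$, so that the tail sum is genuinely a tail. Once this geometric reduction is in place, the rest is a routine swap-and-estimate combined with the absolute continuity of an $\ell^1$ series.
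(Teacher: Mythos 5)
Your proposal is correct and follows essentially the same route as the paper's (very terse) proof: the paper likewise uses $|s|>6\sqrt d \Rightarrow |s|_\infty>6$, so that $|\varepsilon t-s|_\infty\le 5$ forces $|t|_\infty>1/\varepsilon$, swaps the order of summation, and bounds the tail by $\sum_{|t|_\infty>1/\varepsilon}\sup_{z\in[0,1]^d+t}|H(z)|^q\to 0$ using $H\in W(C_b,L^{p_0})$. Your write-up merely makes explicit the multiplicity constant $11^d$ and the passage from $\ell^{p_0}$-summability of the local suprema to $\ell^1$-summability of their $q$-th powers (which one can also get directly from $\ell^{p_0}\hookrightarrow\ell^{q}$), so there is nothing to correct.
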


\begin{proof}
Since
$H \in W(L^\infty , L^{p_0})$,
we obtain  
\begin{align*}
\sum_{s\in\Zdst,\abs{s}>6\sqrt{d}} 
\menv^{\varepsilon,q}(s) 
\leq
\sum_{s\in\Zdst,\abs{s}_\infty>6} 
\menv^{\varepsilon,q}(s) 
\lesssim
\sum_{t \in \Zdst, 
\abs{t}_\infty>1/\varepsilon}
\sup_{z\in[0,1]^{d}+t} \abs{H (z) ^q }
\longrightarrow 0,
\end{align*}
as $\varepsilon \longrightarrow 0^+$.
\end{proof}

\begin{lemma}\label{lemma:estimateOfVepsilon}
Suppose that the hypothesis
of Lemma \ref{lemma:commutator} hold true with $q\leq 1$ and $\varepsilon \leq 1$,
and let $\menv^{\varepsilon,q}(s)$ be given by
\eqref{eq_def_menv}.
Then for $\abs{k-j}>4$ we have
\begin{align}
\label{eq_V_kj_new}
(V^{\varepsilon,p}_{j,k})^{q/p}&\leq 
\sup_{\rho \in \Pi} \sum_{\lambda \in \Lambda}
\abs{([A,\varphi^\varepsilon_k]\varphi^\varepsilon_j)_{\lambda,\rho}}^q
\lesssim \menv^{\varepsilon,q}(k-j) 
\text{ if } p\le 1,
\\ \label{eq_V_kj_new2}
(V^{\varepsilon }_{j,k})^{q}
&
\lesssim \menv^{\varepsilon,q}(k-j) \text{ if } p > 1.
\end{align}
\end{lemma}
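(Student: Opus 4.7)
My plan is to work directly from the matrix-entry formula
\[
b_{\lambda,\rho} := \bigl([A,\varphi^\varepsilon_k]\varphi^\varepsilon_j\bigr)_{\lambda,\rho} = -a_{\lambda,\rho}\,\varphi^\varepsilon_j(\rho)\bigl(\varphi^\varepsilon_k(\lambda) - \varphi^\varepsilon_k(\rho)\bigr),
\]
already derived in the proof of Lemma \ref{lemma:ConvergenceOfVepsilon}. Since $\supp\varphi\subseteq B_2(0)$, the assumption $|k-j|>4$ makes $\varphi^\varepsilon_j$ and $\varphi^\varepsilon_k$ have disjoint supports on $\rd$: if both $\varphi^\varepsilon_j(\rho)\neq 0$ and $\varphi^\varepsilon_k(\rho)\neq 0$, then $|\varepsilon\rho-j|\le 2$ and $|\varepsilon\rho-k|\le 2$, forcing $|k-j|\le 4$. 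Hence the term $\varphi^\varepsilon_k(\rho)$ drops out, and the envelope bound $|a_{\lambda,\rho}|\le H(\lambda-\rho)$ yields $|b_{\lambda,\rho}|\le H(\lambda-\rho)$ whenever $|\varepsilon\lambda-k|\le 2$ and $|\varepsilon\rho-j|\le 2$, and $b_{\lambda,\rho}=0$ otherwise.

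For the first inequality in \eqref{eq_V_kj_new}, since $0<q\le p\le 1$ by the hypothesis, I apply \eqref{eq:h3} of Proposition \ref{Prop:SchurTestSmaller1} to $B:=[A,\varphi^\varepsilon_k]\varphi^\varepsilon_j$, which immediately gives
\[
(V^{\varepsilon,p}_{j,k})^{q/p}=\|B\|_{S-p}^{q/p}\le \|B\|_{S-q} = \sup_{\rho\in\Pi}\sum_{\lambda\in\Lambda} |b_{\lambda,\rho}|^q.
\]
Both \eqref{eq_V_kj_new} and \eqref{eq_V_kj_new2} then reduce to estimating the column sum $\sum_\lambda H(\lambda-\rho)^q$, subject to the support constraints above, and in the $p>1$ case also the row sum $\sum_\rho H(\lambda-\rho)$, by a constant multiple of $\menv^{\varepsilon,q}(k-j)$.

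The geometric heart of the argument is a decomposition of the sum over $\lambda$ (respectively $\rho$) according to the unique unit cube $[0,1]^d+t$, $t\in\Zdst$, that contains the difference $\mu=\lambda-\rho$. Relative separation of $\Lambda$ bounds the number of $\lambda$ with $\lambda-\rho\in[0,1]^d+t$ by a constant $C_d\,\rel(\Lambda)$, while $H(\mu)^q\le \sup_{z\in[0,1]^d+t}|H(z)|^q$. The support constraints force $|\varepsilon\mu-(k-j)|\le 4$, and combined with $\varepsilon\le 1$, which ensures $|\varepsilon t-\varepsilon\mu|_\infty\le 1$, this confines the relevant $t$ to the set $\{t:|\varepsilon t-(k-j)|_\infty\le 5\}$, precisely the index set in the definition of $\menv^{\varepsilon,q}$. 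Collecting terms yields \eqref{eq_V_kj_new} for $p\le 1$.

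For the case $p>1$, the same cube partition applied to both the column and row Schur sums produces
\[
V^\varepsilon_{j,k}\lesssim \sum_{t:|\varepsilon t-(k-j)|_\infty\le 5}\sup_{z\in[0,1]^d+t}|H(z)|,
\]
without any $q$-th power. Since $q\le 1$, the elementary inclusion $\ell^q\hookrightarrow\ell^1$ gives $\sum_t x_t\le\bigl(\sum_t x_t^q\bigr)^{1/q}$ for nonnegative $x_t$, and raising to the $q$-th power yields \eqref{eq_V_kj_new2}. The only delicate point, and arguably the main obstacle, is geometric bookkeeping: verifying that the constant $5$ in the definition of $\menv^{\varepsilon,q}$ is exactly what accommodates the cube indices produced by the support analysis under the hypothesis $\varepsilon\le 1$, and ensuring that the multiplicative constants depending only on $d$ and $\rel(\Lambda)$ are uniform in $j$, $k$, $\varepsilon$, and $p$.
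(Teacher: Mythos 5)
Your proposal is correct and follows essentially the same route as the paper: simplify the commutator entries using $\varphi^\varepsilon_j(\rho)\varphi^\varepsilon_k(\rho)=0$ for $\abs{k-j}>4$, pass from the $S$-$p$ norm to the $q$-power column sum via \eqref{eq:h3}, and then run the unit-cube decomposition with the support constraint $\abs{\varepsilon(\lambda-\rho)+(j-k)}\leq 4$ and relative separation to land on $\menv^{\varepsilon,q}(k-j)$. Your treatment of the $p>1$ case (bounding both Schur sums by the first-power cube sum and then using $\ell^q\hookrightarrow\ell^1$) is a valid way of making precise what the paper dispatches with ``analogously.''
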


\begin{proof}
Suppose that
$\abs{k-j}>4$.
Since $\varphi$ is supported
in $B_2(0)$, it follows that
$\varphi^\varepsilon_j(\rho)
\varphi^\varepsilon_k(\rho)=0$.

As a consequence,
the matrix
entries of $[A,\varphi^\varepsilon_k]
\varphi^\varepsilon_j$ simplify
into
\begin{align*}
([A,\varphi^\varepsilon_k]\varphi^\varepsilon_j)
_{\lambda,\rho}
= 
-a_{\lambda,\rho} 
\varphi^\varepsilon_j
(\rho)\varphi^\varepsilon_k(\lambda),
\qquad \rho \in \Pi , \lambda \in \Lambda \, .
\end{align*}
This gives
\begin{align*}
& \abs{([A,\varphi^\varepsilon_k]
\varphi^\varepsilon_j)_{\lambda,\rho}}^q
\leq 
\abs{H(\lambda- \rho)}^q
\varphi^\varepsilon_j(\rho)^q 
\varphi^\varepsilon_k(\lambda)^q . 
\end{align*}

Consequently, using \eqref{eq:h3} for
$\abs{k-j}>4$ and $q/p\leq 1$, we have 
\begin{align*}
(V^{\varepsilon,p }_{j,k} )^{q/p} &= \norm{[A,\varphi _k^\varepsilon  ] \varphi _j^\varepsilon}_{S-p}^{q/p}
\leq
\sup_{\rho \in \Pi}
\sum_{\lambda \in \Lambda}
\abs{([A,\varphi^\varepsilon_k]
\varphi^\varepsilon_j)_{\lambda,\rho}}^q
\leq
\sup_{\rho \in \Pi} \sum_{\lambda \in \Lambda}
\abs{H(\lambda- \rho)}^q
\varphi^\varepsilon_j( \rho)^q 
\varphi^\varepsilon_k(\lambda)^q \, .
\end{align*}
If $\varphi^\varepsilon_j(\rho) 
\varphi^\varepsilon_k(\lambda) \not = 0$, then
$\abs{\varepsilon \rho -  j} \leq 2$ and 
$\abs{\varepsilon \lambda - k} \leq 2$, whence 
\begin{align}
\label{eq_in}
\abs{\varepsilon (\lambda - \rho) + (j-k)}
\leq 4.
\end{align}
Hence 
\begin{align}
\label{eq_four_terms}
&\sup_{\rho \in \Pi} \sum_{\lambda \in \Lambda}
\abs{([A,\varphi^\varepsilon_k]\varphi^\varepsilon_j)
_{\lambda,\rho}} ^q
\lesssim
\sup_{\rho \in \Pi}
\sum_{
\stackrel{\lambda  \in  \Lambda}
{\abs{\varepsilon (\lambda - \rho)+(j-k)}\leq 4}
}
\abs{H(\rho-\lambda)}^q.
\end{align}

For fixed  $\varepsilon \leq 1$
we bound the  sum in \eqref{eq_four_terms} by 
\begin{align*}
&\sum_{\lambda \in \Lambda:\abs{\varepsilon (\lambda - \rho) + (j-k)}
                 \leq 4} \abs{H(\rho-\lambda)}^q \leq \sum_{t\in\Zdst}
\sum_{\stackrel{\lambda \in \Lambda : \abs{\varepsilon (\lambda - \rho) + (j-k)} \leq 4}{(\lambda - \rho) \in
[0,1]^{d}+t}} \abs{H(\rho-\lambda)}^q
\\
&\quad \lesssim \rel(\rho - \Lambda ) \sum_{t\in\Zdst :
  \abs{\varepsilon t + (j-k)}_\infty \leq 5} \sup_{z\in [0,1]^{d}+t} \abs{H (z)}^q
\\
&\quad\lesssim \menv^{\varepsilon,q}(k-j) \, ,
\end{align*}
since $\rel (\Lambda )$ is translation-invariant. Substituting  this bound in \eqref{eq_four_terms}, we  obtain \eqref{eq_V_kj_new}.
%
Analogously we obtain \eqref{eq_V_kj_new2}.
\end{proof}

\begin{lemma}
\label{lemma:ConvergenceOfShurNormOfVepsilon}
Suppose that the hypotheses
of Lemma  \ref{lemma:commutator} hold for $p\le 1$. Then
\begin{align}
\label{eq_schur_jk}
\sup_{k\in \zd} \sum_{j\in \zd} 
(V^{\varepsilon,p}_{j,k})^{q/p}+
\sup_{j\in \zd} \sum_{k\in \zd} 
(V^{\varepsilon,p}_{j,k})^{q/p} \longrightarrow 0,
\mbox{ as }\varepsilon \longrightarrow 0^+.
\end{align}
\end{lemma}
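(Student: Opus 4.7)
The plan is to split each of the two sums in \eqref{eq_schur_jk} by distance from the diagonal and to invoke a different preceding lemma on each piece. I fix the cutoff $R := 6\sqrt{d}$ (which is at least $4$ for every $d\geq 1$) and decompose, for each $k \in \zd$,
\begin{equation*}
\sum_{j \in \zd} (V^{\varepsilon,p}_{j,k})^{q/p} = I(k,\varepsilon) + II(k,\varepsilon),
\end{equation*}
where $I(k,\varepsilon) := \sum_{j :\, |k-j| \leq R} (V^{\varepsilon,p}_{j,k})^{q/p}$ collects the near-diagonal contributions and $II(k,\varepsilon) := \sum_{j :\, |k-j| > R} (V^{\varepsilon,p}_{j,k})^{q/p}$ collects the tail. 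The companion expression $\sup_j \sum_k$ in \eqref{eq_schur_jk} is split analogously.

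For the near-diagonal piece, the number of indices $j$ satisfying $|k-j|\leq R$ is bounded by a constant $N_R$ depending only on $R$ and $d$, uniformly in $k$. Each term is dominated by $\big(\sup_{j',k' \in \zd} V^{\varepsilon,p}_{j',k'}\big)^{q/p}$, and Lemma \ref{lemma:ConvergenceOfVepsilon} ensures (in the case $p\leq 1$ that we are in) that this supremum tends to $0$ as $\varepsilon\to 0^+$. Hence $\sup_{k} I(k,\varepsilon) \to 0$, and the symmetric counterpart $\sup_j I$ vanishes in the same manner by interchanging the roles of $j$ and $k$.

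For the tail piece, since $R\geq 4$, Lemma \ref{lemma:estimateOfVepsilon} applies pointwise and yields $(V^{\varepsilon,p}_{j,k})^{q/p} \lesssim \menv^{\varepsilon,q}(k-j)$, with an implicit constant that is independent of $j$, $k$ and $\varepsilon$. Reindexing by $s = k-j$ produces the uniform bound
\begin{equation*}
II(k,\varepsilon) \lesssim \sum_{s\in\zd,\ |s| > R} \menv^{\varepsilon,q}(s),
\end{equation*}
whose right-hand side tends to $0$ as $\varepsilon \to 0^+$ by Lemma \ref{lemma:ConvergenceDelta} (whose conclusion is available precisely because $R = 6\sqrt{d}$). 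The companion term $\sup_j \sum_k$ is handled identically: $\menv^{\varepsilon,q}(k-j)$ depends only on the difference, so the sum over $k$ at fixed $j$ reindexes to the same expression $\sum_{|s|>R}\menv^{\varepsilon,q}(s)$.

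No substantial obstacle is anticipated; the three preceding lemmas were tailored precisely so that this final aggregation step amounts to a bookkeeping argument. The only small care required is to choose the cutoff $R$ large enough to simultaneously trigger the hypothesis $|k-j|>4$ of Lemma \ref{lemma:estimateOfVepsilon} and the threshold $|s|>6\sqrt{d}$ of Lemma \ref{lemma:ConvergenceDelta}; the choice $R = 6\sqrt{d}$ achieves both at once.
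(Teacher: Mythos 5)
Your argument is correct and is essentially the same as the paper's: the same split at the cutoff $6\sqrt{d}$, with the near-diagonal part controlled by $\sup_{s,t}(V^{\varepsilon,p}_{s,t})^{q/p}$ via Lemma \ref{lemma:ConvergenceOfVepsilon}, the tail controlled by $\menv^{\varepsilon,q}$ via Lemmas \ref{lemma:estimateOfVepsilon} and \ref{lemma:ConvergenceDelta}, and the other term in \eqref{eq_schur_jk} obtained by interchanging $j$ and $k$. No issues.
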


\begin{proof}
Let $\varepsilon \leq 1$ and   $\menv^{\varepsilon,q}$ be defined as
in~\eqref{eq_def_menv}. 
Fix $j \in \zd$ and use  Lemma \ref{lemma:estimateOfVepsilon}
to estimate
\begin{align*}
&\sum_{k: \abs{k-j} > 6\sqrt{d}} (V^{\varepsilon,p}_{j,k})^{q/p} \lesssim
\sum_{k: \abs{k-j} > 6\sqrt{d}} 
\menv^{\varepsilon,q}(k-j).
\end{align*}
For the sum over $\{k\in \zd : \abs{j-k} \leq 6 \sqrt{d}\}$ we use the 
bound 
\begin{align*}
&\sum_{k: \abs{j-k} \leq 6\sqrt{d}} (V^{\varepsilon,p}_{j,k})^{q/p}
\leq
\#\{k: \abs{j-k} \leq 6\sqrt{d}\} \sup_{s,t} (V^{\varepsilon,p}_{s,t})^{q/p}
\lesssim \sup_{s,t} (V^{\varepsilon,p}_{s,t})^{q/p}.
\end{align*}
Hence,
\begin{align*}
\sum_{k \in \zd } (V^{\varepsilon,p}_{j,k})^{q/p}
\lesssim \sup_{s,t} (V^{\varepsilon,p}_{s,t})^{q/p} + \sum_{\abs{s}>6\sqrt{d}} \menv^{\varepsilon,q}(s),
\end{align*}
which tends to 0 uniformly in  $j$ as $\varepsilon \rightarrow 0^+$
by Lemmas \ref{lemma:ConvergenceOfVepsilon} and  \ref{lemma:ConvergenceDelta}.  
 The convergence of the first term in~\eqref{eq_schur_jk} follows in exactly the same way by interchanging the roles of $j$ and $k$. 
%
\end{proof}

With the auxiliary results at hand, we now  prove 
Theorem \ref{Thm:LowerBound}.

\begin{proof}[Proof of Theorem \ref{Thm:LowerBound}]

As already mentioned we restrict ourselves to the case $p_0<1$, cf.\ Remark \ref{rem:InvarianceLowerBound}.
Since $W(C^0; L^{p_0})(\rd) \subseteq W(L^{\infty}; L^1)(\rd)$ an
application of Proposition 8.1 of \cite{GOR15} provides the claim in
the Banach space case $p,q \geq 1$. 

\textbf{Case $p \leq 1$ and $q < p$} so that $q/p<1$.
After multiplying $A$ with a constant, we may  assume that
\begin{align*}
\norm{c}_p \leq \norm{Ac}_p, \qquad c \in \ell^p(\Pi),
\end{align*}
since $A$ is bounded from below on $\ell^p(\Pi )$ by assumption~\eqref{eq:c4}.
By  Lemma \ref{lemma:commutator}, \eqref{eq:h1},  we obtain
\begin{equation}
  \label{eq:h5}
\sum _{k\in \bZ^{d} }  \norm{\varphi^\varepsilon_k c}_p^q 
  \leq   \sum _{k\in \bZ^{d} } \norm{\varphi^\varepsilon_k Ac}_p ^q +
K ^{q/p}\sum _{k\in \bZ^{d} } \sum_{j\in \zd} (V^{\varepsilon,p}_{j,k})^{q/p}
\norm{\varphi^\varepsilon_jc}_p^q, \quad c\in \ell^p(\Pi)  \, ,
\end{equation}
for some $K>0$.
 According to Lemma \ref{lemma:ConvergenceOfShurNormOfVepsilon} we may choose $\varepsilon >0$
 such that
 $$
  K ^{q/p}  \sup _{j\in \bZ^{d}  } \sum_{k\in \bZ^{d}} (V^{\varepsilon,p}_{j,k})^{q/p} <
\frac{1}{2} \, .
$$
 Using this bound in \eqref{eq:h5} and Proposition~\ref{Prop:SchurTestSmaller1},
we obtain that 
\begin{align*}
\sum_{k \in \zd} \norm{\varphi^\varepsilon_k c}_p^q
\leq
\sum_{k \in \zd}\norm{\varphi^\varepsilon_kAc}_p^q
+ 1/2 \sum_{k \in \zd} \norm{\varphi^\varepsilon_k
c}_p^q \, .
\end{align*}
 Hence,
\begin{align}
\label{eq_sj_a}
\left(\sum_{k \in \zd} \norm{\varphi^\varepsilon_k c}_p^q\right)^{1/q}
\leq
2^{1/q} \left(\sum_{k \in \zd}\norm{\varphi^\varepsilon_k Ac}_p^q\right)^{1/q}.
\end{align}

Using the 
equivalent norm  of Lemma \ref{lemma:Normequivalence2} in
\eqref{eq_sj_a}, we   deduce that for all $p_0 \leq q < p$
\begin{align*}
\norm{c}_q \lesssim \norm{Ac}_q,
\end{align*}
with a constant independent of $q$ (since $2^{1/q} \leq
2^{1/p_0}$). This completes the proof of the case $p_0\leq q\leq p$. 

\textbf{Case $p<1$ and $p<q$.}
Again
we may assume that 
\begin{align*}
\norm{c}_p \leq \norm{Ac}_p, 
\quad
c \in \ell^p(\Pi).
\end{align*}
According to Lemma \ref{lemma:ConvergenceOfShurNormOfVepsilon}
in case $q=p$  
we may choose $\varepsilon >0$ 
 such that
 \begin{align} \label{eq:estimate1}
   K  \sup _{j\in \zd  } \sum_{k\in \zd} V^{\varepsilon,p}_{j,k} <
\frac{1}{2}
\quad \text{ and }
\quad 
  K  \sup _{k\in \zd  } \sum_{j\in \zd} V^{\varepsilon,p}_{j,k} <
\frac{1}{2}
\, .
\end{align}
Due to Lemma \ref{lemma:commutator} we have, for $c \in \ell
^p(\Pi)$, 
\begin{align*}
 	\norm{\varphi^\varepsilon_k c}_p^p 
 	\leq \norm{\varphi^\varepsilon_k Ac}_p ^p+
K \sum_{j\in \zd} V^{\varepsilon,p}_{j,k} \norm{\varphi^\varepsilon_jc}_p^p.
\end{align*}
We set $a^{\varepsilon}_k= \norm{\varphi_k^{\varepsilon}c}_p^p$,
$b^{\varepsilon}_k=\norm{\varphi_k^{\varepsilon}Ac}_p^p$ and let
$V^{\varepsilon}$ be the operator associated to  the matrix $V^{\varepsilon,p}_{j,k}$. Then the previous inequality can be written as
\begin{align*}
	a^{\varepsilon}_k \leq b^{\varepsilon}_k + K (V^{\varepsilon}a^{\varepsilon})_k.
\end{align*}
 We take the $q/p-$norm of the previous estimate, apply the triangle inequality first. Next we apply
Proposition \ref{Prop:SchurTestBigger1}, use
\eqref{eq:estimate1} and get
  \begin{align*}
  		\norm{a^{\varepsilon}}_{q/p} \leq \norm{b^{\varepsilon}}_{q/p}+ K \norm{V^{\varepsilon}a^{\varepsilon}}_{q/p}
  		\leq  \norm{b^{\varepsilon}}_{q/p}+ \frac{1}{2} \norm{a^{\varepsilon}}_{q/p}.
 \end{align*}
 In other words we have $\norm{a^{\varepsilon}}_{q/p} \leq 2
 \norm{b^{\varepsilon}}_{q/p}$. Undoing the abbreviations, this means
 that
 \begin{align*}
 \left(\sum_{k \in \zd} \norm{\varphi^{\varepsilon}_k c}_p^q
  \right)^{1/q} = \norm{a^\varepsilon}_p^{1/p}\leq
2^{1/p}   \norm{b^\varepsilon}_p^{1/p} =  2^{1/p} \left(\sum_{k \in \zd} \norm{\varphi^{\varepsilon}_k A c}_p^q \right)^{1/q}.
\end{align*}
An application of the norm equivalence of Lemma
\ref{lemma:Normequivalence2} provides the claim 
\begin{align*}
	\norm{c}_q \lesssim \norm{Ac}_q.
\end{align*}
\end{proof}

As observed in~\cite[Rem.~A.2]{GOR15} the  lower bound guaranteed by Theorem
\ref{Thm:LowerBound} is   uniform for all $p$.

As an immediate consequence of Theorem 
\ref{Thm:LowerBound} we get a first spectral 
invariance result of a matrix $A \in 
\mathcal{C}^{p_0}(\Lambda, \Pi)$ by means of standard functional analytical arguments.

\begin{cor}\label{Prop:SpectralInvarianceMatrix}
Let $p_0 \in (0,1]$,
$\Lambda , \Pi \subseteq \rd$ be relatively separated,
and
suppose that $A\in C^{p_0}(\Lambda ,\Pi )$
is invertible from $\ell^p(\Pi)$ to
$\ell^p(\Lambda)$ for some 
$p\in [p_0,\infty]$. Then the following is true:
\begin{enumerate}
\item[(i)] $A$ is injective from $\ell^q(\Pi)$ to
$\ell^q(\Lambda)$ for all $q \in 
[p_0,\infty ]$. 

\item[(ii)] $A$ is
invertible from $\ell^q(\Pi)$ to
$\ell^q(\Lambda)$ for all $q \in [p,\infty )$.
\end{enumerate}
\end{cor}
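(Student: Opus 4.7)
Part (i) is an immediate consequence of Theorem~\ref{Thm:LowerBound}: invertibility of $A$ on $\ell^p(\Pi)$ yields a lower bound $\norm{Ac}_p \geq C_0\norm{c}_p$ for $c\in\ell^p(\Pi)$, and Theorem~\ref{Thm:LowerBound} extends this lower bound to
$$\norm{c}_q \leq C\norm{Ac}_q, \qquad c\in\ell^q(\Pi),\ q\in[p_0,\infty],$$
which is stronger than injectivity (it says $A$ has closed range and trivial kernel on every $\ell^q(\Pi)$).

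For part (ii), the plan is to combine the lower bound from (i) with a standard approximation argument to upgrade closed range to surjectivity. Fix $q\in[p,\infty)$ and $b\in\ell^q(\Lambda)$, and let $b_n$ be the truncation of $b$ to an increasing exhaustion of $\Lambda$ by finite sets. Each $b_n$ has finite support, so $b_n\in\ell^p(\Lambda)$, and since $q<\infty$ the truncations satisfy $b_n\to b$ in $\ell^q(\Lambda)$. Using the invertibility of $A$ on $\ell^p$, set $c_n := A^{-1}b_n \in \ell^p(\Pi)\subseteq\ell^q(\Pi)$, so that $Ac_n = b_n$. The lower bound from (i) at exponent $q$ gives
$$\norm{c_n-c_m}_q \leq C\norm{A(c_n-c_m)}_q = C\norm{b_n-b_m}_q,$$
so $(c_n)$ is Cauchy in $\ell^q(\Pi)$ and converges to some $c\in\ell^q(\Pi)$. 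The continuity of $A$ on $\ell^q$ from Proposition~\ref{Prop:ConvDomOpCont} yields $Ac = \lim_n Ac_n = \lim_n b_n = b$, proving surjectivity; combined with the lower bound this makes $A:\ell^q(\Pi)\to\ell^q(\Lambda)$ bijective with bounded inverse.

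\textbf{Main point.} Once Theorem~\ref{Thm:LowerBound} is in hand the remaining work is a short functional-analytic packaging, so there is no real obstacle. The only delicate feature is the restriction $q<\infty$ in (ii): the surjectivity step relies on density of finitely supported sequences in $\ell^q$, which holds precisely when $q<\infty$ (they are dense in $c_0\subsetneq\ell^\infty$), so the approximation scheme cannot reach $q=\infty$ and only closed range (equivalently, injectivity) survives there.
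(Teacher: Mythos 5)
Your proposal is correct and follows essentially the same route as the paper: part (i) is exactly the paper's argument (invertibility gives the stability bound, Theorem~\ref{Thm:LowerBound} transfers it to all $q\in[p_0,\infty]$, hence injectivity), and for part (ii) the paper notes that $A\ell^q(\Pi)\supseteq A\ell^p(\Pi)=\ell^p(\Lambda)$ is dense in $\ell^q(\Lambda)$ for $q<\infty$ and combines this with the closed range coming from the lower bound, which is precisely what your truncation/Cauchy-sequence argument carries out explicitly. Your remark on why $q=\infty$ is excluded matches the role of density in the paper's proof as well.
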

\begin{proof}
  As already observed, $A: \ell ^p(\Pi ) \to \ell ^p(\Lambda )$ satisfies
  the stability condition $\|c\|_p \lesssim \|Ac\|_p $, \fif\ $A$ is
  one-to-one on $\ell ^p(\Pi )$ and has closed range in $\ell
  ^p(\Lambda )$. Thus, if $A$ is invertible on some $\ell ^p(\Pi)$,
  then by Theorem~\ref{Thm:LowerBound} $A$ satisfies the stability
  condition~\eqref{eq:c4} for all $q\geq p_0$. Consequently $A$ is
  one-to-one on all $\ell ^q(\Pi )$ for $q\geq p_0$, which is (i).

  If $q\geq p$, then $\ell ^p(\Lambda )$ is dense in  $\ell ^q(\Lambda
  )$. Since $A$ is onto $\ell ^p(\Lambda )$, we obtain  $\ell ^p(\Lambda ) =
  A\ell ^p(\Pi) \subseteq \ell ^q(\Lambda )$, and $A$  has also dense range
  in $\ell ^q(\Lambda )$. Consequently $A$ is onto $\ell ^q(\Lambda )$
  and thus  invertible on  $\ell ^q(\Lambda )$.   
\end{proof}

In the case, when  $A$  in invertible is the Hilbert space
$\ell ^2$, the above  results are already contained in~\cite[Theorem 4.6 and
Theorem 8.5]{MoSu17} and in \cite[Theorem 3.9]{CoGi22} and in
~\cite{DaSo22_2}.

Note that Corollary~\ref{Prop:SpectralInvarianceMatrix} asserts only
  that the invertibility of $A$ on $\ell ^p$ implies the invertibility
  on the larger space $\ell ^q$, $q>p$. To obtain the same conclusion
  for the smaller spaces $\ell ^q, q<p$, we need to refine our
  arguments. We will show that 
the inverse of $A$ has an envelope belonging to the same Wiener amalgam space than the envelope of  $A$. The main idea of the proof of the following Theorem is taken from \cite{Sjo95}. 

\begin{tm}\label{tm:SpectralInvariance}
  Let $0< p_0 \leq 1$, $\Lambda, \Pi \subseteq \rd $ be relatively
  separated. 
  Suppose $A \in \mathcal{C}^{p_0}(\Lambda, \Pi )$ is invertible 
  from $\ell ^p(\Pi)$ to  $\ell ^p(\Lambda )$ for some $p\in [p_0,
  \infty ]$.
  Then  $A^{-1}\in  \mathcal{C}^{p_0}(\Pi, \Lambda )$. 
\end{tm}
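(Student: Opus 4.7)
The plan is to estimate each column of $B = A\inv$ via the Sj\"ostrand-style commutator inequality of Lemma~\ref{lemma:commutator} and then assemble the column bounds into a single translation-invariant envelope for $B$ belonging to $W(C_b,L^{p_0})$.

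First, by Corollary~\ref{Prop:SpectralInvarianceMatrix}(ii), $A$ is invertible on $\ell^q(\Pi)$ for every $q \in [p,\infty)$, so for each $\rho\in\Pi$ the column $c^\rho := A\inv e_\rho \in \ell^p(\Lambda)$ is a well-defined sequence with $Ac^\rho = e_\rho$ and $c^\rho_\lambda=b_{\lambda,\rho}$. We focus on the case $p \in (p_0,1]$; the case $p > 1$ is handled analogously via part (ii) of the commutator lemma and the convolution bound for $V^{\varepsilon}_{j,k}$. Applying Lemma~\ref{lemma:commutator}(i) with $q = p$ to $c = c^\rho$, and using $\varphi^\varepsilon_k A c^\rho = \varphi^\varepsilon_k(\rho)e_\rho$, we set $u_k := \norm{\varphi^\varepsilon_k c^\rho}_p^p$ and $v_k := \varphi^\varepsilon_k(\rho)^p$ to obtain the scalar inequality
$$ u_k \le v_k + K \sum_{j\in\zd} V^{\varepsilon,p}_{j,k}\,u_j.$$

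By Lemma~\ref{lemma:ConvergenceOfShurNormOfVepsilon} we fix $\varepsilon=\varepsilon_0$ small enough that $K \sup_k\sum_j V^{\varepsilon_0,p}_{j,k}<1/2$; iteration then yields $u \le R^{\varepsilon_0} v$ with $R^{\varepsilon_0}:=\sum_{n\ge 0}(KV^{\varepsilon_0})^n$. The key structural ingredient is Lemma~\ref{lemma:estimateOfVepsilon}: $V^{\varepsilon_0,p}_{j,k}$ is dominated as a function of $k-j$ by $\Theta^{\varepsilon_0,p}(k-j)$, and a direct computation using $H\in W(C_b,L^{p_0})$ shows $\Theta^{\varepsilon_0,p}\in\ell^{p_0/p}(\zd)$. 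Since $p_0/p\le 1$, the space $\ell^{p_0/p}(\zd)$ is a convolution quasi-Banach algebra (Lemma~\ref{plone}(iii)), so each power $(KV^{\varepsilon_0})^n$ is convolution-dominated by the $n$-fold convolution $(K\Theta^{\varepsilon_0,p})^{*n}$. Summing these convolutions inside $\ell^{p_0/p}$ (convergence follows from the smallness of the quasi-norm of $K\Theta^{\varepsilon_0,p}$ and the quasi-triangle inequality) produces a translation-invariant envelope $\Xi \in \ell^{p_0/p}(\zd)$ with $R^{\varepsilon_0}_{k,j}\le \Xi(k-j)$.

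Finally, for $\lambda\in\Lambda$ we pick $k$ with $\varphi^{\varepsilon_0}_k(\lambda)\neq 0$, so $k$ lies within a bounded distance of $\varepsilon_0\lambda$, while the support of $v$ forces the contributing $j$ to lie within a bounded distance of $\varepsilon_0\rho$. Using the relative separatedness of $\Lambda$ to convert an $\ell^p$-norm on a finite fiber into an $\ell^\infty$-bound, we deduce
$$ |b_{\lambda,\rho}|^p = |c^\rho_\lambda|^p \lesssim u_k \lesssim \Xi(k-j) \lesssim \widetilde\Xi(\varepsilon_0(\lambda-\rho)),$$
for a suitable continuous upper envelope $\widetilde\Xi$ of $\Xi$. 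Setting $H(y) := C\,\widetilde\Xi(\varepsilon_0 y)^{1/p}$ yields a function in $W(C_b,L^{p_0})$ (its $L^{p_0}$-integrability follows from $\widetilde\Xi \in \ell^{p_0/p}$ after the rescaling by $\varepsilon_0$) with $|b_{\lambda,\rho}| \le H(\lambda-\rho)$, which is precisely the statement $B \in \mathcal{C}^{p_0}(\Pi,\Lambda)$.

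\textbf{Main obstacle.} The crux of the argument is running the Neumann inversion of $I - KV^{\varepsilon_0}$ inside the quasi-Banach convolution algebra $\ell^{p_0/p}(\zd)$ rather than a Banach algebra. Because $p_0/p<1$ in general, the classical Neumann-series machinery is unavailable and convergence must be controlled via the quasi-triangle inequality of Lemma~\ref{plone}(iii) together with the explicit convolution dominance of Lemma~\ref{lemma:estimateOfVepsilon}. This is precisely where our treatment strictly extends Sj\"ostrand's original Banach-algebra proof of Wiener's lemma to the quasi-Banach regime.
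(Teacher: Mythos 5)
Your proposal follows essentially the same route as the paper's proof: the commutator inequality of Lemma~\ref{lemma:commutator} applied to the columns of $A\inv$, a Neumann-series inversion of the dominating matrix, and the assembly of a continuous, rescaled envelope whose $1/p$-th (resp.\ $1/p_0$-th) root lies in $W(C_b,L^{p_0})$. The only structural difference is that you keep the entries $V^{\varepsilon,p}_{j,k}$ as they are and sum the envelope series in the convolution quasi-algebra $\ell^{p_0/p}(\zd)$ via the $p_0/p$-triangle inequality, whereas the paper first raises the entries to the power $p_0/p$ (resp.\ $p_0$) so that the envelope lies in $\ell^1(\zd)$ and the Neumann series converges in the Banach (Schur-$1$) setting; the two devices are interchangeable, and your final step matches \eqref{eq:co10}--\eqref{eq:co11}.

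One step needs repair, however. You take the dominating sequence for $V^{\varepsilon_0,p}_{j,k}$ to be $\Theta^{\varepsilon_0,p}(k-j)=\menv^{\varepsilon_0,p}(k-j)$ and assert that its $\ell^{p_0/p}$ quasi-norm can be made small enough for the geometric series of envelopes to converge. But Lemma~\ref{lemma:estimateOfVepsilon} provides this domination only for $|k-j|>4$, and $\menv^{\varepsilon,p}(s)$ for small $|s|$ does \emph{not} shrink as $\varepsilon\to0^+$ (already $\menv^{\varepsilon,p}(0)\geq \sup_{z\in[0,1]^d}|H(z)|^p$ for all $\varepsilon\leq 1$), so $\|K\Theta^{\varepsilon_0,p}\|_{\ell^{p_0/p}}<1$ cannot be arranged by shrinking $\varepsilon_0$; the smallness of the Schur norm from Lemma~\ref{lemma:ConvergenceOfShurNormOfVepsilon}, which you do invoke, controls the iteration remainder but not the quasi-norm of the envelope. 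The fix is exactly the paper's piecewise definition of $\Psi^\varepsilon$: bound the near-diagonal entries ($|k-j|\leq 6\sqrt{d}$, where Lemma~\ref{lemma:estimateOfVepsilon} is silent anyway) by $\sup_{j,k}V^{\varepsilon,p}_{j,k}$, which tends to $0$ by Lemma~\ref{lemma:ConvergenceOfVepsilon}, and the far entries by $\menv^{\varepsilon,p}$, whose tail quasi-norm tends to $0$ by the $\ell^{p_0/p}$ analogue of Lemma~\ref{lemma:ConvergenceDelta}. With this patched envelope (and the routine verification that the iteration remainder $(KV^{\varepsilon_0})^N u$ vanishes, which your Schur-norm condition supplies), your argument goes through and yields the same conclusion as the paper.
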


\par

\par

\begin{proof}
By assumption  the matrix 
$
A=(a_{\lambda, \rho})_{\lambda \in \Lambda, \rho \in \Pi}
$ 
has an envelope $H_1 \in W(C_b, L^{p_0})$ so that $|a_{\lambda ,
  \rho }| \leq H_1(\lambda -\rho )$.  
We need to  prove that the matrix $A^{-1}$ also has an envelope $H_2 \in W(C_b,L^{p_0})$.
 For this we recall the notation used in the previous lemmas:  $K= \max _x\Phi ^\varepsilon
 (x)^{-\min (1,p)}$ from Lemma~\ref{lemma:commutator}, and
$$
\widetilde V^\varepsilon
=
\begin{cases}
\big ( (KV^{\varepsilon,p }_{j,k})^{p_0/p}
\big )_{j,k\in \mathbb Z^d}, & p\le 1,
\\[1ex]
\big ( (KV_{j,k}^{\varepsilon})^{p_0}
\big )_{j,k\in \mathbb Z^d}, & p> 1,
\end{cases}
$$
where $V^{\varepsilon,p}_{j,k}$ and
$V^{\varepsilon }_{j,k}$ are defined as in
\eqref{eq_def_V} and \eqref{eq_def_V2}.

Given  $c\in \ell ^p(\Pi )$, the
 sequences $a $ and $a_{A}$ are defined by  
$$
a_k = \nm {\varphi ^\varepsilon _kc}p^{p_0}
\quad \text{and}\quad
a_{A,k} = \nm {\varphi ^\varepsilon _kAc}p^{p_0},
\qquad
k\in \mathbb Z^d.
$$
With this notation  Lemma \ref{lemma:commutator} (and 
    additionally Lemma \ref{plone} if $p>1$) says that 
\begin{equation}
  \label{eq:co4}
  a \leq a_A+\widetilde{V}^\varepsilon a \, .
\end{equation}

\par

Furthermore, Lemma \ref{lemma:estimateOfVepsilon} can be reformulated
as saying that  $\widetilde V^\varepsilon$ is convolution-dominated. 
In fact, 
recall from \eqref{eq_def_menv} that 
\begin{align*}
	\Delta ^{\varepsilon,p_0}(s):= \sum_{t \in \zd:|\varepsilon
  t-s|_{\infty} \leq 5} \sup_{z\in [0,1]^d+t} |H_1(z)|^{p_0} \, ,
\end{align*}
and  define $\Psi ^\varepsilon$ 
by $\Psi^{\varepsilon}(s):= \Delta ^{\varepsilon,p_0}(s)$ for all $s \in \zd$ with $|s| > 6 \sqrt{d}$ 
 and for all $s \in \zd$ with $|s| \leq 6 \sqrt{d}$
 by  $\Psi^{\varepsilon}(s):= 	\left( \sup_{j,k \in \zd} |V_{j,k}^{\varepsilon,p}| \right)^{p_0/p}$ 
 if $p \leq 1$ and
 by  $\Psi^{\varepsilon}(s):= 	\left( \sup_{j,k \in \zd}
   |V_{j,k}^{\varepsilon}| \right)^{p_0}$ if $p \geq 1$. 
In view of the normalization $K$, 
$\tilde{V}^\varepsilon$   possesses the 
envelope $K^{p_0/ \min(1,p)}\Psi^{\varepsilon}$. 
This means that  
\begin{alignat}{2}
    (V^{\varepsilon,p}_{j,k})^{p_0/p}
    &\lesssim \Psi ^\varepsilon (k-j ), & \qquad
    p&\le 1,\ j,k\in \zd \, ,
\label{eq:co3}
\intertext{and}
    V^{\varepsilon }_{j,k}
    &\lesssim \Psi ^\varepsilon (k-j ), & \qquad
    p&> 1,\ j,k\in \zd \, .
\label{eq:co3A}
\end{alignat}
Our next goal is to  represent 
$(I-\widetilde{V}^{\varepsilon})^{-1}$
as a  Neumann series.
For this we choose $\varepsilon >0$ such that 
\begin{equation}
  \label{eq:co12}
  \|\widetilde{V}^\varepsilon \|_{S-1} \leq K^{p_0/\min(1,p)} \|\Psi ^\varepsilon \|_1 \leq
\frac{1}{2}. \,
\end{equation}

This is possible due to Lemma \ref{lemma:ConvergenceDelta} and Lemma
\ref{lemma:ConvergenceOfVepsilon}, since $\sharp \{s\in \zd: |s|\leq
6\sqrt{d}\}$ is finite and depends only on the dimension $d$. 

As a consequence the geometric series  $\widetilde{W}:=  \sum
_{k=1}^\infty \left( \widetilde{V}^{\varepsilon}\right)^k$ converges
in the $\|.\|_{S-1}$-norm and  we obtain 
$$
(\mathrm{I} - \widetilde{V}^\varepsilon )\inv = \sum _{k=0}^\infty \left( \widetilde{V}^{\varepsilon}\right)^k =
\mathrm{I} + \widetilde{W} \, . 
$$
  Since all entries of $\widetilde{V}^\varepsilon$ are non-negative by definition,
$\widetilde{W}$ also has only non-negative entries and preserves
(pointwise) inequalities. Moreover, since $\widetilde{V}^\varepsilon$ is
convolution-dominated, so is $\widetilde{W}$, and by \eqref{eq:co12} there
exists an envelope  $W\in \ell ^1(\zd )$ such that
$$
\widetilde{W}_{jk} \leq W(k-j) \qquad j,k \in \zd \, .
$$
Now \eqref{eq:co4} yields
\begin{equation}
  \label{eq:co6}
  a = (\mathrm{I} +\widetilde{W})(\mathrm{I} - \widetilde{V}^\varepsilon ) a \leq
  (\mathrm{I} + \widetilde{W}  )a_A \, ,
\end{equation}
or entrywise
\begin{equation}\label{eq:co7}
\|\varphi _k^\varepsilon c\|_p^{p_0}
\leq
\|\varphi _k^\varepsilon Ac\|_p^{p_0} +
\sum _{j\in \zd } W(k-j) \|\varphi _j^\varepsilon Ac\|_p^{p_0} \, .  
\end{equation}
Since $A$ is assumed to be invertible as a map from
$\ell ^p(\Pi )$ to $\ell ^p(\Lambda )$, there
exist $b _\lambda \in \ell ^p(\Pi  )$ such that $Ab_\lambda =
\delta_\lambda $, whence the matrix $B$ with entries $b_{\rho \lambda
} = (b_\lambda ) _\rho $ is the inverse of $A$. Using $b_\lambda $ in
\eqref{eq:co7} we obtain
\begin{equation}
  \label{eq:co8}
\|\varphi _k^\varepsilon b_\lambda \|_p^{p_0} \leq \|\varphi _k^\varepsilon \delta
_\lambda \|_p^{p_0} +
\sum _{j\in \zd } W(k-j) \|\varphi _j^\varepsilon \delta _\lambda \|_p^{p_0} \, .  
\end{equation}
Note that
$$
\|\varphi _k^\varepsilon \delta_\lambda \|_p^{p_0} = \big( \sum _{\rho \in
  \Lambda } \varphi (\varepsilon \rho -k)^p \delta _\lambda (\rho )
\big)^{p_0/p} = \varphi (\varepsilon \lambda -k )^{p_0} \, .
$$
Let $\lambda  \in \Lambda $, $\rho \in \Pi$. For the off-diagonal decay it
suffices to consider only indices satisfying $\varepsilon |\lambda -\rho |
>4$. Choose $k_\rho \in \zd $ such that
$$
|\varepsilon \rho -k_\rho |< 2 \quad \text{  and  } \quad \varphi (\varepsilon
\rho -k_\rho ) ^{p_0} \geq c
$$
for some constant $c$  (in fact, by \eqref{eq_sj_partq} in the proof of Lemma \ref{lemma:Normequivalence1} $c=\eta \inv $).
Then
$$
|\varepsilon \lambda -k_\rho | \geq \varepsilon |\lambda -\rho | - |\varepsilon
\rho -k_\rho | > 2 \, ,
$$
and consequently $\varphi (\varepsilon \lambda -k_\rho ) = 0$ in
\eqref{eq:co8} since $\text{supp}(\varphi) \subseteq B_2(0)$. Now  \eqref{eq:co8} simplifies
to  
\begin{equation}
  \label{eq:co9}
  c |(b_ \lambda )_\rho |^{p_0} \leq \|\varphi _{k_\rho } ^\varepsilon b_\lambda
  \|_p^{p_0} \leq \sum _{j\in \zd } W(k_{\rho} -j) \varphi (\varepsilon \lambda
  -j)^{p_0} \leq \sum _{j: |j-\varepsilon \lambda |<2} W(k_\rho - j) \, .
\end{equation}
This inequality suggests the following envelope for $B=A\inv $. Let
\begin{equation}
  \label{eq:co10}
H (x) = \sum _{l\in \zd : |l- \varepsilon x| <4} W(l). 
\end{equation}
To obtain a continuous envelope, we use a cut-off function  $\psi \in
C^{\infty}_c(\rd)$ satisfying $0 \leq \psi \leq 1$ and $\psi(x)=1$ for
$|x|\leq 4$ and set
\begin{align*}
    \tilde H(x)= \sum_{l \in \mathbb{Z}^d} W(l)\psi(\varepsilon x-l) \, . 
\end{align*}
Since $\psi $ has compact support and $W(C_b,\ell ^1)$ is translation
invariant, 
we have 
\begin{align*}
    \|\tilde H\|_{W(C_b,\ell^1)}
    \leq 
    \sum_{l\in \mathbb{Z}^d} |W(l)| \sup _{l\in \zd } \|\psi(\varepsilon \cdot -l)\|_{W(C_b, \ell^1)}
    \lesssim 
    \|W\|_{\ell^1} \, ,
\end{align*}
therefore $\tilde H \in W(C_b,\ell^1)$. 

Furthermore, since $|k_\rho -j-\varepsilon(\lambda-\rho)| \leq |k_\rho -\varepsilon \rho | + |\varepsilon \lambda -j| <  4$, \eqref{eq:co10} says that
\begin{equation}
  \label{eq:co11}
  |b_{\rho \lambda } |^{p_0} \leq H (\rho - \lambda)  
 \leq \tilde{H}  (\rho -\lambda )  
  \, ,
\end{equation}
or
$$
|b_{\rho \lambda } |= |(b_ \lambda )_\rho  | \leq \tilde{H} (\rho - \lambda
)^{1/p_0} \, ,
$$
and $\tilde{H} ^{1/p_0} \in W( C_b, L^{p_0})$,  as claimed.
\end{proof}

This theorem enables us to extend  the spectral invariance
result of Corollary~\ref{Prop:SpectralInvarianceMatrix} to the case
$p_0 \leq q\leq p$ as follows. 

\begin{tm}\label{Thm:SpectralInvarianceMatrix_new}
Let $p_0 \in (0,1]$, $\Lambda, \Pi \subseteq \rd$ be relatively separated and
$A \in C^{p_0}(\Lambda, \Pi)$ be
invertible from $\ell^p(\Pi )$ to $\ell^p(\Lambda)$ for some $p\in [p_0,\infty]$. Then $A$ is invertible from  $\ell^q(\Pi )$ to $\ell^q(\Lambda)$ for all $q \in [p_0,\infty)$.
\end{tm}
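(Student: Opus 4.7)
The plan is to deduce this invertibility result essentially as a corollary of Theorem~\ref{tm:SpectralInvariance} combined with the boundedness of convolution-dominated matrices on all $\ell^q$-spaces (Proposition~\ref{Prop:ConvDomOpCont}), the only nontrivial issue being the identification of the candidate inverse across different $\ell^q$-spaces.

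First, since $A \in C^{p_0}(\Lambda,\Pi)$ is invertible from $\ell^p(\Pi)$ to $\ell^p(\Lambda)$, Theorem~\ref{tm:SpectralInvariance} furnishes $A^{-1} \in C^{p_0}(\Pi,\Lambda)$. Let $B$ denote the matrix of this inverse, with entries $b_{\rho,\lambda}$. By Proposition~\ref{Prop:ConvDomOpCont} applied to both $A$ and $B$, the matrix $A$ acts boundedly from $\ell^q(\Pi)$ to $\ell^q(\Lambda)$ and $B$ acts boundedly from $\ell^q(\Lambda)$ to $\ell^q(\Pi)$ for every $q \in [p_0,\infty]$. So we have two candidate maps on each $\ell^q$; it remains to show that they are mutually inverse whenever $q < \infty$.

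Fix $q \in [p_0,\infty)$. The case $q \in [p,\infty)$ is already contained in Corollary~\ref{Prop:SpectralInvarianceMatrix}, so assume $q \in [p_0,p)$. For any finitely supported sequence $c$ on $\Pi$, we have $c \in \ell^p(\Pi)\cap \ell^q(\Pi)$, and by definition of $B$ as the $\ell^p$-inverse of $A$, the matrix products $BAc$ and $ABc$ reduce (after interchanging the finite sums) to $c$; thus $BAc = c$ on $\Pi$ and $ABc = c$ on $\Lambda$ for every such $c$. Since finitely supported sequences are dense in $\ell^q(\Pi)$ and in $\ell^q(\Lambda)$ for $q < \infty$, and since both compositions $BA$ and $AB$ are bounded operators on these spaces by the previous paragraph, a standard density/continuity argument extends the identities $BA = I$ and $AB = I$ to all of $\ell^q(\Pi)$ and $\ell^q(\Lambda)$ respectively. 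This shows that $A$ is invertible from $\ell^q(\Pi)$ onto $\ell^q(\Lambda)$ with inverse $B$.

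The main obstacle was in fact already surmounted in Theorem~\ref{tm:SpectralInvariance}: without the convolution-dominated envelope for $A^{-1}$ we would have no candidate operator to act on the smaller spaces $\ell^q(\Lambda)$ for $q < p$. The restriction $q < \infty$ is also essential, since finitely supported sequences fail to be dense in $\ell^\infty$; this is why the conclusion excludes $q = \infty$.
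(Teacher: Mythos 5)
Your proof is correct and follows essentially the same route as the paper: both arguments reduce the claim to Theorem~\ref{tm:SpectralInvariance} (which supplies the envelope for $A^{-1}$), together with the $\ell^q$-boundedness of $\cC^{p_0}$-matrices from Proposition~\ref{Prop:ConvDomOpCont} and Corollary~\ref{Prop:SpectralInvarianceMatrix}(ii) for $q\ge p$. The only difference is in the finishing step for $q<p$: you verify $AB=BA=\mathrm{I}$ on finitely supported sequences and extend by density (which is why $q<\infty$ is needed), whereas the paper obtains injectivity from Corollary~\ref{Prop:SpectralInvarianceMatrix}(i) and surjectivity from the inclusion $\ell^q\subseteq\ell^p$ combined with the boundedness of $A^{-1}$ on $\ell^q$; both finishes are valid.
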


\begin{proof}
According to    Corollary
\ref{Prop:SpectralInvarianceMatrix}(ii) $A$ is invertible on $\ell ^q$
for $q\geq p$ and $A$ is one-to-one on all
$\ell ^q$.  So 
it remains to prove that $A$ is surjective for
$p_0 \leq q < p$. We assume that $A$ is invertible
on $\ell^p$ with inverse $A^{-1}$. 
Hence for $u \in \ell^q(\Lambda) \subseteq
\ell ^p(\Lambda)$ there is some $c \in \ell^p(\Pi )$
with $c=A^{-1}u$. 
By Theorem \ref{tm:SpectralInvariance}  $A^{-1}$ is bounded  on
$\ell^q$, consequently,  since $u\in \ell ^q$ we have $c=A\inv u \in
\ell ^q$. Thus $A$ is    
onto $\ell^q(\Lambda)$.
\end{proof}

\par

The following consequence explains why the statement of
Theorem~\ref{Thm:SpectralInvarianceMatrix_new} is referred to as the
spectral invariance property.

\begin{cor}\label{tm:Spectralinvariance}
Let $0<p_0 \leq 1$ be arbitrary. If $A \in
\mathcal{C}^{p_0}(\Lambda )$ and $A$ is invertible on
$\ell^{p}(\Lambda )$ for some $p\in [p_0,\infty]$, then $A^{-1} \in
\mathcal{C}^{p_0}(\Lambda )$ and
 \begin{align*}
 	Sp_{\mathcal{B}(\ell^{q})}(A) = Sp_{ \mathcal{C}^{p_0}}(A) \qquad \text{ for all } q\in [p_0,\infty),
 \end{align*}
  where $Sp_{\mathcal{A}}(A)$ denotes the spectrum of $A$ in the algebra $\mathcal{A}$.
\end{cor}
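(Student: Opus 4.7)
The plan is to bootstrap Theorem \ref{tm:SpectralInvariance} and Theorem \ref{Thm:SpectralInvarianceMatrix_new}, which do all the technical heavy lifting; the corollary is essentially a reformulation of those results in spectral language. First, the assertion $A^{-1} \in \mathcal{C}^{p_0}(\Lambda)$ is just Theorem \ref{tm:SpectralInvariance} applied in the special case $\Pi = \Lambda$, so no additional work is needed for it.

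For the spectral identity $Sp_{\mathcal{B}(\ell^q)}(A) = Sp_{\mathcal{C}^{p_0}}(A)$, I would prove both inclusions separately. The inclusion $Sp_{\mathcal{B}(\ell^q)}(A) \subseteq Sp_{\mathcal{C}^{p_0}}(A)$ is the routine direction: if $\mu \notin Sp_{\mathcal{C}^{p_0}}(A)$, then $A-\mu I$ has an inverse $B \in \mathcal{C}^{p_0}(\Lambda)$, and Proposition \ref{Prop:ConvDomOpCont} puts $B$ in $\mathcal{B}(\ell^q(\Lambda))$ for every $q \in [p_0,\infty]$. Since inversion in $\mathcal{C}^{p_0}$ is also inversion in the larger algebra $\mathcal{B}(\ell^q)$, we obtain $\mu \notin Sp_{\mathcal{B}(\ell^q)}(A)$.

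For the reverse inclusion, suppose $\mu \notin Sp_{\mathcal{B}(\ell^q)}(A)$, i.e., $A-\mu I$ is invertible on $\ell^q(\Lambda)$ for some fixed $q \in [p_0,\infty)$. The key point is that $A-\mu I$ itself belongs to $\mathcal{C}^{p_0}(\Lambda)$: the identity matrix has envelope any continuous bump function $\chi$ centered at $0$ with $\chi(0) \geq 1$, which lies in $W(C_b,L^{p_0})$, so $I \in \mathcal{C}^{p_0}(\Lambda)$, and hence by the vector space structure of $\mathcal{C}^{p_0}$ so does $A - \mu I$. Applying Theorem \ref{tm:SpectralInvariance} (or equivalently the combination of Theorem \ref{Thm:SpectralInvarianceMatrix_new} and Theorem \ref{tm:SpectralInvariance}) to $A - \mu I$ yields $(A-\mu I)^{-1} \in \mathcal{C}^{p_0}(\Lambda)$, so $\mu$ is in the resolvent set of $A$ within $\mathcal{C}^{p_0}$.

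Since both main theorems of Section \ref{sec:SpectralInvariance} are already at hand, there is no genuine obstacle here; the only small bookkeeping item is confirming that $\mu I \in \mathcal{C}^{p_0}(\Lambda)$ so that the hypothesis of Theorem \ref{tm:SpectralInvariance} applies to $A - \mu I$, but this is immediate from the remarks following Definition \ref{Def:ConvDominatedMatrix} that $\mathcal{C}^{p_0}$ is a quasi-Banach $*$-algebra containing the identity.
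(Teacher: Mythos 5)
Your proposal is correct and follows essentially the route the paper intends: the membership $A^{-1}\in\mathcal{C}^{p_0}(\Lambda)$ is Theorem \ref{tm:SpectralInvariance}, and the spectral identity follows by applying Proposition \ref{Prop:ConvDomOpCont} for one inclusion and Theorem \ref{tm:SpectralInvariance} to $A-\mu I$ (noting $I\in\mathcal{C}^{p_0}(\Lambda)$ via a compactly supported continuous envelope) for the other; the paper states the corollary as an immediate consequence of exactly these results.
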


In the literature many variations of this spectral invariance result exist. For an  overview of those variations we refer to \cite{Gro10}. Here we just want to mention the following ones: 
 In case $p_0=1$ and $p=q=2$ the previous theorem also holds in the weighted case. It was proved by Baskakov e.g. in \cite{Ba90, Ba97} and by Sj\"ostrand \cite{Sjo95} in the unweighted case. 

\section{Spectral Invariance of Pseudodifferential Operators}

The aim of this section is to transfer the results on the  spectral
invariance of matrices to 
pseudodifferential operators on unweighted modulation spaces.
In the proofs we mainly follow \cite{Gro06} and
\cite{GR08} and replace an arbitrary spectrally invariant Banach
algebra of matrices by the quasi-Banach algebra $\cC ^{p_0}$.

First we
list up all needed definitions and properties to reach that aim. We
start with recalling the modulation spaces.

\subsection{Modulation Spaces $M^{p,q}$.}

 For the definition of those spaces we need the short-time Fourier
 transform, which  we recall now.

Let $g \in \s \setminus \{0\}$ be fixed. For every
$f \in \mathcal{S}'(\mathbb{R}^d)$, the
\textit{short-time Fourier transform} $V_gf$ is
the function on $\mathbb{R}^{2d}$ defined by the formula
\begin{align}\label{def:STFT}
	V_gf(x,\xi)
	:=
	\langle f,g(\cdot -x)e^{2\pi i   \xi \cdot \, \cdot } \rangle.
\end{align}
Here $\scal \cdo \cdo$ is the unique extension of the
$L^2$ scalar product on $\mathcal S (\mathbb R^d)
\times \mathcal S (\mathbb R^d)$
into $\mathcal S ' (\mathbb R^d)
\times \mathcal S (\mathbb R^d)$. We observe that
if $f\in L^p(\mathbb R^d)$ for some $p\in [1,\infty ]$,
then $V_gf$ is given by \eqref{def:STFTIntro}.
If $g$ and $f$ are both defined on
$\mathbb{R}^{2d}$, then $V_gf$ is a function on
$\mathbb{R}^{4d}$.

\par

We recall that for 
$0<p,q \leq \infty$ and fixed $g \in \s \setminus \{0\}$,
the \emph{modulation space} $M^{p,q}(\rd)$ consists of all $f \in \mathcal{S}'(\rd)$
such that \eqref{Eq:ModNorm} is finite. If $1 \leq p,q \leq \infty$,
then $M^{p,q}(\rd)$ is a Banach space with norm
$\| \cdo \|_{M^{p,q}}$.
Otherwise $M^{p,q}(\rd)$ is a quasi-Banach space with
quasi-norm $\| \cdo \|_{M^{p,q}}$. We write $M^p=M^{p,p}$.


It is well-known  that the definition of $M^{p,q}(\rd)$ is independent
of the choice of the window function $g \in \s$~\cite{Gro01}. For $p<1$ or 
$q<1$ the proof of this fact can be found in \cite{GaS04}.  
Additionally,
the Schwartz space  $\s$ is dense in $M^{p,q}(\rd)$ in the case
$p,q <\infty$, cf. \cite[Remark 14]{GaS04}. 

Due to  \cite[Theorem 3.6]{Kob06} and \cite[Theorem 3.4.]{GaS04} the
following continuous embeddings of modulation spaces hold.  

\begin{prop}
\label{Prop:EmbeddingsModulationSpaces}
Let $0<p_1 \leq p_2 \leq \infty$
and $0<q_1 \leq q_2 \leq \infty$. Then
$$
\mathcal{S}(\rd) \hookrightarrow 
M^{p_1,q_1}(\rd) \hookrightarrow 
M^{p_2,q_2}(\rd)
\hookrightarrow \mathcal{S}'(\rd).
$$
\end{prop}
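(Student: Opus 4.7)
The plan is to fix one Gaussian window $g_0(x)=e^{-\pi |x|^2}$ (which lies in $\cS (\rd)$) and to use it in the definition of all three quasi-norms. All the embedding constants can then be re-expressed with respect to arbitrary Schwartz windows afterwards, because of the window-independence of $M^{p,q}$ noted just before the proposition.

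For the leftmost embedding $\cS (\rd )\hookrightarrow M^{p_1,q_1}(\rd )$, I would use the standard observation that whenever $f,g_0\in \cS (\rd )$, the STFT $V_{g_0}f$ is a Schwartz function on $\rdd $; indeed $V_{g_0}f(x,\xi )$ is, up to a modulation, the Fourier transform of $f\cdot \overline{g_0(\cdot -x)}$, and the Schwartz seminorms of this product in the variable $t$ can be estimated by a finite number of Schwartz seminorms of $f$, uniformly in $x$. This gives a pointwise bound of the form $|V_{g_0}f(x,\xi )|\leq C_{N}(1+|x|+|\xi |)^{-N}\,\nm {f}{\cS ,N}$ for every $N$, which makes the $L^{p_1,q_1}$-integral finite and bounded by a Schwartz seminorm of $f$.

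For the rightmost embedding $M^{p_2,q_2}(\rd )\hookrightarrow \cS '(\rd )$, I would first use the middle embedding (proved next) to reduce to $p_2=q_2=\infty$. For $f\in M^{\infty ,\infty }$ the STFT $V_{g_0}f$ is in $L^\infty (\rdd )$, and the standard reconstruction formula
$$
\langle f,\varphi \rangle \;=\; \frac 1{\nm {g_0}2^2}\intrdd V_{g_0}f(z)\,\overline{V_{g_0}\varphi (z)}\,dz,\qquad \varphi \in \cS (\rd ),
$$
together with the fact that $V_{g_0}\varphi \in \cS (\rdd )\subseteq L^1(\rdd )$, shows that $f$ acts continuously on $\cS (\rd )$; the continuity constant is a Schwartz seminorm of $\varphi $, so $f\in \cS '(\rd )$.

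The main step, and expected main obstacle, is the middle embedding $M^{p_1,q_1}\hookrightarrow M^{p_2,q_2}$ in the quasi-Banach range. For $p_1,q_1\ge 1$ this follows from the convolution identity $V_{g_0}f=\tfrac 1{\nm {g_0}2^2}V_{g_0}f\,\natural \,V_{g_0}g_0$ (with a twisted convolution, dominated pointwise by ordinary convolution), Young's inequality, and the ordinary nesting $L^{p_1,q_1}(\rdd )\hookrightarrow L^{p_2,q_2}(\rdd )$, where the latter holds because $V_{g_0}f$ lies in the Wiener amalgam $W(C_b,L^{p_1,q_1})$ and Wiener amalgam spaces do nest under $\ell ^{p_1,q_1}\hookrightarrow \ell ^{p_2,q_2}$. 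For $p_1<1$ or $q_1<1$ Young's inequality breaks down, and I would handle this by the standard $r$-trick: pick $0<r\le \min (p_1,q_1,1)$ and use the pointwise quasi-convolution inequality
$$
|V_{g_0}f(z)|^{r}\;\le \;C_{g_0,r}\,\bigl(|V_{g_0}f|^{r}\ast |V_{g_0}g_0|^{r}\bigr)(z),\qquad z\in \rdd ,
$$
which reduces matters to the Banach case in the exponents $p_1/r,q_1/r\ge 1$. An alternative I would try if the $r$-trick causes bookkeeping headaches is to pass to a Gabor discretization: for a Gaussian $g_0$ and sufficiently dense lattice $\alpha \zd \times \beta \zd $ one has $\nm {f}{M^{p,q}}\asymp \nm {(V_{g_0}f(\alpha m,\beta n))_{m,n}}{\ell ^{p,q}}$ even in the quasi-Banach range, after which the embedding reduces to the elementary nesting $\ell ^{p_1,q_1}\hookrightarrow \ell ^{p_2,q_2}$ of mixed-norm sequence spaces. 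Either route delivers the claimed continuous embedding.
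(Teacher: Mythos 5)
The paper does not actually prove this proposition: it is quoted from the literature, with the embeddings attributed to \cite[Theorem 3.6]{Kob06} and \cite[Theorem 3.4]{GaS04}. Your sketch essentially reconstructs the standard argument behind those references, and its three steps are sound: rapid decay of $V_{g_0}f$ for $f\in \cS (\rd )$ gives the left embedding, the weak inversion (Moyal-type) pairing gives $M^{\infty ,\infty }\hookrightarrow \cS '(\rd )$, and the middle embedding follows from the reproducing identity plus amalgam nesting. The one point you should flag as the real crux is the quasi-Banach step: the pointwise inequality $|V_{g_0}f|^{r}\lesssim |V_{g_0}f|^{r}\ast |V_{g_0}g_0|^{r}$ for $0<r\le 1$ is true but genuinely nontrivial -- it is essentially the core lemma of the cited papers rather than a routine manipulation (raising a convolution to a power $r<1$ cannot be pushed inside the integral directly). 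For your chosen Gaussian window it can be justified via the Bargmann transform, since $|V_{g_0}f(z)|$ is a Gaussian weight times the modulus of an entire function whose $r$-th power is plurisubharmonic and hence satisfies a sub-mean-value estimate; for general Schwartz windows one instead runs the sup-over-unit-cubes discretization with the $p$-triangle inequality, which is exactly the route of \cite{GaS04}. Your alternative via Gabor discretization is equally legitimate but likewise leans on the quasi-Banach frame characterization of $M^{p,q}$ (the norm equivalence the paper records in \eqref{eq:normequivalence}), which again is the content of \cite{GaS04}. So: correct in outline, with the understanding that the ``standard $r$-trick'' you invoke is precisely where the cited results do their work.
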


\subsection{Gabor frames}

For the definition of Gabor frames
it is convenient to use
time-frequency shifts $\pi(z) f$
of $f \in \mathcal{S}'(\rd)$
given by
\begin{align*}
(\pi(z) f)(t)
:=
e^{2\pi i \xi \cdot t} f(t-x), 
\qquad z=(x,\xi) \in \mathbb R^{2d},\ t\in \rd.
\end{align*}
The \emph{Gabor system} with
respect to the \emph{(Gabor) atom}
$g \in M^1(\rd )\backslash \{0\}$
and lattice
$\Lambda \subseteq \mathbb R^{2d}$
is given by
\begin{align*}
\mathcal{G}(g,\Lambda)
=
\{ \pi(\lambda)g: \lambda \in \Lambda \}.
\end{align*}
Then the \emph{analysis operator} and
\emph{synthesis operator}
$$
C_g\, :\, M^\infty (\rd )
\to
\ell ^\infty (\Lambda)
\quad \text{and}\quad
D_g\, :\, \ell ^\infty (\Lambda)
\to
M^\infty (\rd ),
$$
respectively, with respect to $g$ and
$\Lambda$, are given by
$$
C_gf = \big (\scal f{\pi (\lambda )g} \big )
_{\lambda \in \Lambda}
\quad \text{and}\quad
D_gc = \sum _{\lambda \in \Lambda}
c_\lambda \pi (\lambda )g ,
$$
when $f\in M^\infty (\rd )$ and
$c=(c_\lambda )_{\lambda \in \Lambda}\in
\ell ^\infty(\Lambda)$. Here the series converges
in $\mathcal S'(\rd )$.

 The (Gabor) frame operator 
$S=S_{g_1,g_2,\Lambda}: M^{\infty}(\rd ) \to 
M^{\infty}(\rd)$, $g_1,g_2 \in M^1(\rd)$ is defined  by
\begin{align}
S_{g_1,g_2,\Lambda}f:=\sum_{ \lambda \in \Lambda}
\langle f,\pi(\lambda ) g_1 
\rangle \pi(\lambda ) g_2.
\label{Eq:FrameOp}
\end{align}
If $g_1=g_2=g$ we write $S_{g,\Lambda}$ instead of $S_{g_1,g_2,\Lambda}$. It follows that $C_g, D_g$ and $S_{g,\Lambda}$
are well-defined and continuous
(see e.{\,}g. Chapters 11-14
in \cite{Gro01}).
Let $g \in \s \backslash\{0\}$.
If $S_{g,\Lambda}$	is bounded and invertible on 
$L^2(\rd)$, then we call $\mathcal{G}(g,\Lambda)$ 
\emph{Gabor frame}. For rectangular lattices 
$\Lambda =\alpha \zd \times \beta \zd$ the 
existence of Gabor frames is well understood, see 
\cite{Da90, Gro01, Ja95, Wa93}. 

For every  Gabor frame  $\mathcal{G}(g,\Lambda)$ over a lattice,
there exists a  
dual window $\gamma = S^{-1}g \in L^2(\rd)$, so that every $f$ can be
expanded into a  Gabor expansion
\begin{align}\label{GaborExpansionOfFunction}
f&=D_g C_{\gamma}f=\sum_{\lambda \in \Lambda} \<f, \pi(\lambda) \gamma\> \pi(\lambda) g,
\\[1ex] \label{GaborExpansionOfFunction2}
f&=D_{\gamma} C_g f=\sum_{\lambda \in \Lambda} \<f, \pi(\lambda) g\> \pi(\lambda) \gamma.
\end{align}
For $g\in \mathcal S(\rd )$ a fundamental result of
Janssen~\cite{Ja95} asserts  that also 
$\gamma \in \mathcal S (\rd)$.
Then the expansion formulas \eqref{GaborExpansionOfFunction} and
\eqref{GaborExpansionOfFunction2} hold  for every $f\in \cS '(\rd )$  
with weak-$*$-convergence. 

A Gabor frame $\mathcal{G}(g,\Lambda)$ is called tight if
$S_{g,\Lambda }  =  C \, \mathrm{Id}$ for some $C >0$. In this case
$\gamma = S_{g,\Lambda }\inv g = C\inv g$, and the
Gabor expansion 
$$
f= C\inv \sum_{\lambda \in \Lambda}
\<f, \pi(\lambda) \gamma\> \pi(\lambda)
g
$$
looks like an orthonormal expansion. 

Tight Gabor frames  with the constant $C=1$ can be constructed as
follows.   Let $\mathcal{G}(g,\Lambda)$ be a Gabor frame with frame
operator $S_{g,\Lambda}$. Due to \cite[Lemma 5.16]{Gro06}
$S^{-1/2}_{g,\Lambda} \mathcal{G}(g,\Lambda)$ is a tight Gabor frame
with constant $C=1$.  By  applying this procedure to a Gaussian
window, one sees that there exist tight Gabor frames $\cG (g,\Lambda
)$ with $g\in \cS (\rd )$. 

For future references we remark that if
$p\in (0,\infty ]$ and $f\in \mathcal S'(\rd)$,
then
\begin{align}\label{eq:normequivalence}
f\in M^p(\rd ) \quad \Leftrightarrow \quad
C_gf\in \ell ^p(\Lambda )
\quad \Leftrightarrow \quad
C_\gamma f\in \ell ^p(\Lambda ),
\end{align}
with norm equivalence 
$
\nm f{M^p} \asymp
\nm {C_gf}{\ell ^p}
\asymp
\nm {C_\gamma f}{\ell ^p}.
$

\subsection{Pseudodifferential Operators}

For a real-valued $d\times d$-matrix  $A \in \mathbb R^{d\times d}$,
and a symbol $\mathfrak{a} \in \mathcal{S}'(\mathbb{R}^{2d})$ the
pseudodifferential operator $Op_A(\mathfrak{a})$ is defined
by
\begin{align*}
	Op_A(\mathfrak{a}) u(x)
	= 
	\int \hspace{-2mm}
	\int \mathfrak{a}(x-A(x-y), \xi) f(y)
	e^{2\pi i(x-y)\cdot \xi}\, dy d\xi,
	\qquad u \in \mathcal{S}(\rd),
\end{align*} 
where the integrals should be interpreted in
distribution sense, if necessary.
If $A=0$, then $Op _A(\mathfrak{a})$ agrees with the
Kohn-Nirenberg or normal representation $a(x,D)$.
If instead $A=\frac 12 I$, where $I$ is the
$d\times d$ identity matrix, then $Op _A(\mathfrak{a})$
is the Weyl quantization $\mathfrak{a}^w$ of $\mathfrak{a}$.

\par

By \cite{Hoe83,To17B} for each symbol
$\mathfrak{a}_1 \in \mathcal{S}'(\mathbb{R}^{2d})$
and each $A_1, A_2  \in \mathbb R^{d\times d}$
there is a
unique $\mathfrak{a}_2 \in 
\mathcal{S}'(\mathbb{R}^{2d})$ such that $Op_{A_1}
(\mathfrak{a}_1)=Op_{A_2}(\mathfrak{a}_2)$, and that 
\begin{align}\label{eq:ChanceOfQuantization}
	Op_{A_1}(\mathfrak{a}_1)=Op_{A_2}(\mathfrak{a}_2) \Leftrightarrow \mathfrak{a}_2(x,\xi) = e^{2\pi it ((A_1-A_2)D_{\xi})
	\cdot D_x} \mathfrak{a}_1(x,\xi).
\end{align}
We refer to \cite{To17B} for the proof of
the following result.

\par

\begin{prop}\label{Prop:CalcTransfer}
Let $A,A_1,A_2\in \mathbb R^{d\times d}$ and
$p,q\in (0,\infty ]$. Then the following is true:
\begin{enumerate}
\item $e^{2\pi it (AD_{\xi})\cdot D_x}$
is an isomorphism  
on $M^{p,q}(\mathbb R^{2d})$;

\item $Op _{A_1}(M^{p,q}(\mathbb R^{2d}))
=
Op _{A_2}(M^{p,q}(\mathbb R^{2d}))$.
\end{enumerate}
\end{prop}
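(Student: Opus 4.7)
The plan is to deduce (2) from (1) via the change-of-quantization formula \eqref{eq:ChanceOfQuantization}: once (1) establishes that $U_B := e^{2\pi i t(BD_\xi)\cdot D_x}$ is a topological isomorphism of $M^{p,q}(\rdd)$ for every real matrix $B$, taking $B = A_1 - A_2$ yields $\mathfrak{a}_1 \in M^{p,q} \Leftrightarrow \mathfrak{a}_2 = U_B\mathfrak{a}_1 \in M^{p,q}$, which is precisely (2). So (1) carries the entire analytic content.

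For (1), I would view $U_A$ as the Fourier multiplier on $\rdd$ with unimodular symbol $m_A(\zeta,\eta) = e^{2\pi i\, \zeta\cdot A\eta}$, where $(\zeta,\eta)$ is dual to $(x,\xi)$. Since $|m_A|\equiv 1$ and the derivatives of $m_A$ are of polynomial growth, $U_A$ is unitary on $L^2(\rdd)$ and maps both $\mathcal{S}(\rdd)$ and $\mathcal{S}'(\rdd)$ continuously and bijectively onto themselves. A direct Fourier-side calculation then yields the intertwining identity
\begin{align*}
U_A\, \pi(z) \;=\; e^{2\pi i\, \Xi_1 \cdot A\Xi_2}\, \pi(\chi_A z)\, U_A,
\qquad z = (X_1,X_2,\Xi_1,\Xi_2)\in \mathbb{R}^{4d},
\end{align*}
where $\chi_A z := (X_1 - A\Xi_2,\; X_2 - A^\top\Xi_1,\; \Xi_1,\; \Xi_2)$ is a volume-preserving linear shear that fixes the \emph{frequency} variables $(\Xi_1,\Xi_2)$ and translates only the \emph{time} variables $(X_1,X_2)$ by a linear function of $(\Xi_1,\Xi_2)$.

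Substituting this identity, applied with $A$ replaced by $-A$, into the pairing $V_G(U_A F)(z) = \langle F,\, U_{-A}\pi(z) G\rangle$ would give
\begin{align*}
|V_G(U_A F)(z)| \;=\; \bigl|V_{U_A^{-1} G} F(\chi_{-A} z)\bigr|
\end{align*}
for every $G\in \mathcal{S}(\rdd)\setminus\{0\}$ and $F\in \mathcal{S}'(\rdd)$. The new window $U_A^{-1} G = U_{-A} G$ is again a nonzero Schwartz function, and at each fixed $(\Xi_1,\Xi_2)$ the map $\chi_{-A}$ acts as a pure translation on $(X_1,X_2)$, so the iterated $L^{p,q}$-quasi-norm over $\mathbb{R}^{4d} = \mathbb{R}^{2d}_X \times \mathbb{R}^{2d}_\Xi$ is preserved. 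Combined with the window-invariance of the modulation-space quasi-norm valid throughout the full range $0<p,q\leq\infty$ (see \cite{GaS04}), this gives $\|U_A F\|_{M^{p,q}}\asymp \|F\|_{M^{p,q}}$. Running the same argument with $-A$ in place of $A$ handles $U_A^{-1}$ and shows that $U_A$ is a topological isomorphism of $M^{p,q}(\rdd)$.

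The main obstacle is the quasi-Banach range $p,q<1$, where neither duality nor complex interpolation is available, so the usual reductions of metaplectic boundedness to a Hilbert-space statement fail. This route circumvents them by relying solely on the explicit intertwining of $U_A$ with time-frequency shifts and on the quasi-Banach window-invariance of $M^{p,q}$ established in \cite{GaS04}; once those two ingredients are in place, the passage from (1) to (2) via \eqref{eq:ChanceOfQuantization} is immediate.
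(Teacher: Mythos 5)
Your argument is correct, but note that the paper itself contains no proof of Proposition \ref{Prop:CalcTransfer}: it simply defers to \cite{To17B}, so the comparison is between your self-contained argument and Toft's matrix-parameterized calculus. Your route is the natural one and all the key steps check out: the operator $U_A=e^{2\pi i (AD_\xi)\cdot D_x}$ is the Fourier multiplier on $\rdd$ with unimodular symbol $e^{2\pi i \zeta\cdot A\eta}$, hence a bijection of $\cS (\rdd)$ and of $\cS '(\rdd)$ with $U_A^{*}=U_A^{-1}=U_{-A}$; the intertwining identity with the phase $e^{2\pi i \Xi_1\cdot A\Xi_2}$ and the shear $\chi_A$ follows from the elementary expansion $q(\Xi)-q(\Xi-\Xi_0)$ being affine for the quadratic phase $q(\Xi_1,\Xi_2)=\Xi_1\cdot A\Xi_2$, and your matrices $A$, $A^{\top}$ in $\chi_A$ are exactly what this computation gives. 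The decisive point, which you correctly isolate, is that the multiplier depends only on the covariables, so $\chi_{-A}$ fixes the outer (frequency) variables of the STFT on $\rdd$ and translates only the inner (time) variables; this is why the iterated $L^{p,q}$ quasi-norm is literally preserved even for $p\neq q$ (a chirp acting on the time side would not allow this step). Combined with the window-class independence of $M^{p,q}$ for all $0<p,q\le\infty$ from \cite{GaS04} (already invoked elsewhere in the paper) and the fact that $U_{-A}G\in\cS (\rdd)\setminus\{0\}$, this gives $\nm{U_AF}{M^{p,q}}\asymp\nm F{M^{p,q}}$, and running the argument for $U_{-A}$ yields the isomorphism in (1); statement (2) is then immediate from \eqref{eq:ChanceOfQuantization}, as you say. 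In substance this is the same STFT-based mechanism used in \cite{To01,To17B}; what your write-up adds is an explicit, duality- and interpolation-free presentation that makes the quasi-Banach range transparent.
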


\par

\medspace

We can write a Weyl operator by means of  the Wigner distribution $W$ of $f,g \in L^2(\rd)$ which is defined by
\begin{align}\label{def:WignerDistribution}
	W(f,g)(x,\xi):= \int_{\rd} f(x+t/2)\overline{g(x-t/2)} e^{2\pi i \xi \cdot t}\, dt, \qquad x,\xi \in \rd.
\end{align}
Denote the inversion $\text{\u{g}}$ of $g\in \mathcal{S}'(\rd)$ by
$\text{\u{g}}(x):=g(-x)$ for all $x\in \rd$. Then 
\begin{align*}
W(f,g)(x,\xi)= 2^d e^{4\pi i x \cdot \xi} V_{\text{\u{g}}}f(2x,2\xi)
  \, ,
\end{align*}
and  the Wigner
distribution is just a slight modification  of the short-time Fourier
transform. 
Since  the short-time Fourier transform $V_{.}:\s \times \s \rightarrow \mathcal{S}(\mathbb{R}^{2d})$ we immediately get $W(f,g) \in \mathcal{S}(\mathbb{R}^{2d})$ for all $f,g \in \s$.

By means of the Wigner distribution the Weyl operator of a 
symbol $\mathfrak{a} \in \mathcal{S}'(\Rtdst)$ is given by the formula

\begin{align*}
	\< \mathfrak{a}^{w} f,g \>= \<\mathfrak{a}, W(g,f)\> \qquad \text{ for all } f,g\in \s.
\end{align*}
Pseudodifferential operators of Weyl form are continuous maps from $\s$ to $\mathcal{S}'(\rd)$, see
 \cite{To01} and \cite{Hoe83}. Moreover they are continuous as maps
 between certain modulation spaces, cf. \cite{Gro01} and
 \cite{GrHe99}.  
 
\begin{prop}\label{Prop:EmbeddingsPDO2}
 Let $0<p_0 \leq 1 $, $p,q \in [p_0, \infty]$
 and $\mathfrak{a} \in M^{\infty, p_0}(\rdd)$. Then
 $\mathfrak{a} ^w$ is bounded  on $M^{p,q}(\rd)$.
\end{prop}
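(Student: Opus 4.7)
The plan is the standard Gabor-matrix approach adapted to the quasi-Banach setting. I would fix a tight Gabor frame $\cG(g,\Lambda)$ with $g \in \cS(\rd)\setminus\{0\}$ and frame constant $1$ over a lattice $\Lambda\subset\rdd$, which exists by the $S^{-1/2}$-normalization applied to a Gaussian window (as recalled in the excerpt). Every $f \in M^{p,q}(\rd)$ then admits the expansion $f = \sum_{\lambda\in\Lambda} \langle f, \pi(\lambda)g\rangle \pi(\lambda)g$ with the norm equivalence
$$
\|f\|_{M^{p,q}} \asymp \|C_g f\|_{\ell^{p,q}(\Lambda)},
$$
classical for $p,q\geq 1$ and standard in the quasi-Banach range $p$ or $q<1$ (cf.\ \cite{GaS04}). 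Consequently it suffices to bound the Gabor matrix $M_{\lambda\mu} := \langle \mathfrak{a}^w \pi(\mu)g, \pi(\lambda)g\rangle$ on $\ell^{p,q}(\Lambda)$; the Gabor identity $\mathfrak{a}^w f = D_g(M\,C_g f)$ then transfers this bound back to $M^{p,q}$.

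The crucial input is the covariance of the Wigner distribution together with the identity $\langle\mathfrak{a}^w u, v\rangle = \langle\mathfrak{a},W(v,u)\rangle$. Writing $\Phi := W(g,g)\in \cS(\rdd)$, a standard computation gives
$$
W(\pi(\lambda)g, \pi(\mu)g)(u,v) = e^{2\pi i\theta_{\lambda\mu}(u,v)}\Phi\big((u,v) - \tfrac{\lambda+\mu}{2}\big),
$$
with a linear phase of the form $\theta_{\lambda\mu}(u,v)= (u,v)\cdot J(\lambda-\mu)$ for a fixed symplectic transformation $J$. Pairing against $\mathfrak{a}$ identifies
$$
|M_{\lambda\mu}| = \big|V_\Phi \mathfrak{a}\big(\tfrac{\lambda+\mu}{2},\, J(\lambda-\mu)\big)\big|,
$$
so that $|M_{\lambda\mu}|\leq H(\lambda-\mu)$ where $H(\zeta) := \sup_{w\in\rdd}|V_\Phi\mathfrak{a}(w,J\zeta)|$. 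The very definition of the $M^{\infty,p_0}$-norm yields $\|H\|_{L^{p_0}(\rdd)} \lesssim \|\mathfrak{a}\|_{M^{\infty,p_0}}$, and a standard local-maximum argument using a slightly larger window upgrades this to $H\in W(C_b, L^{p_0})(\rdd)$, whence $M\in\cC^{p_0}(\Lambda)$.

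With $M\in\cC^{p_0}(\Lambda)$ in hand, Proposition \ref{Prop:ConvDomOpCont} bounds $M$ on $\ell^q(\Lambda)$ for every $q\in[p_0,\infty]$. To reach the mixed-norm space $\ell^{p,q}(\Lambda)$ required for $M^{p,q}$, I would iterate the H\"older--Young argument in each coordinate block, relying on the $p_0$-convolution inequality of Lemma \ref{plone}(iii) to absorb quasi-triangle constants. The resulting operator norm is still controlled by $\|M\|_{\cC^{p_0}} \lesssim \|\mathfrak{a}\|_{M^{\infty,p_0}}$, and, combined with the Gabor norm equivalence, produces the claim
$$
\|\mathfrak{a}^w f\|_{M^{p,q}} \lesssim \|\mathfrak{a}\|_{M^{\infty,p_0}}\,\|f\|_{M^{p,q}}.
$$

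The most delicate technical point is the Schur-type bound on the mixed-norm quasi-Banach space $\ell^{p,q}(\Lambda)$ when $p<1$ or $q<1$, because the excerpt only records the diagonal case $p=q$; however, Lemma \ref{plone} is tailor-made for exactly this absorption step. A second delicate point is the Gabor characterization of $M^{p,q}$ in the quasi-Banach regime, which relies on both $g$ and its canonical dual $\gamma$ lying in $\cS(\rd)$; this is guaranteed for the tight frame chosen above by Janssen's theorem, as recalled just before the statement of the Proposition.
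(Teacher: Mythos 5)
Your argument is sound in outline, but note that the paper does not actually prove this proposition: it is quoted from the literature (\cite{GrHe99,Gro01} for the Banach case and \cite[Theorem 3.1]{To17} for the quasi-Banach range), where the continuity is obtained by direct short-time Fourier transform estimates rather than through a Gabor matrix. Your route --- tight Gabor frame, the almost-diagonalization bound $|M(\mathfrak{a})_{\lambda\mu}|\le H(\lambda-\mu)$ with $H\in W(C_b,L^{p_0})$, and boundedness of convolution-dominated matrices --- is the discretization philosophy of \cite{GR08} that the paper uses elsewhere; in fact your Wigner-distribution computation re-derives the implication i)$\Rightarrow$ii) of Theorem~\ref{tm:AlmostDiag}, which you could simply cite. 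Two points must be made explicit for your proof to be complete. First, Proposition~\ref{Prop:ConvDomOpCont} only treats the diagonal spaces $\ell^q=\ell^{q,q}$, so you do need the mixed-norm extension; your coordinate-wise Young/quasi-Young iteration (using Lemma~\ref{plone}) supplies it, but only after fixing a separable lattice $\Lambda=\alpha\zd\times\beta\zd$, so that $\ell^{p,q}(\Lambda)$ is meaningful and matches the $M^{p,q}$-characterization of \cite{GaS04}; a Gaussian-generated tight frame can be chosen over such a lattice. Second, the identity $C_g(\mathfrak{a}^w f)=M(\mathfrak{a})\,C_g f$ requires interchanging $\mathfrak{a}^w$ with the Gabor expansion; the cheapest justification is the embedding $M^{\infty,p_0}(\rdd)\subseteq M^{\infty,1}(\rdd)$ together with the known Banach-case boundedness of Sj\"ostrand-class operators on $M^\infty(\rd)$, which legitimizes the identity for every $f\in M^{p,q}(\rd)\subseteq M^\infty(\rd)$ and avoids density issues at $p=\infty$ or $q=\infty$. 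With these two points added, your argument is a self-contained alternative to the citation: the cited proof buys brevity and the weighted generalization, yours buys consistency with the matrix machinery the paper develops anyway.
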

 
As proved in \cite[Theorem 3.1]{To17}, this theorem also holds for
more general weighted modulation spaces.  

\begin{rem}\label{Rem:ModSpChirpInv}
By \cite{To01} it follows that $M^{p,q}(\rdd)$
is invariant under actions with chirps
$e^{i (AD_{\xi})\cdot D_x}$ with $A 
\in M(d \times d; \mathbb{C})$ for all $p,q \in 
(0,\infty]$. Hence all results concerning
Weyl operators of this paper also 
hold for operators of the form $Op_A(a)$.
\end{rem}

\par

Next we show that Gabor frame operators
with windows in $M^{p_0}$
are pseudo-differential operators with
symbols in $M^{\infty ,p_0}$.

\par

\begin{prop}\label{Prop:FrameOpPseudoDiff}
Let $p_0\in (0,1]$, $g_1,g_2\in M^{p_0}(\rd)$,
$A\in M(d \times d; \mathbb{R})$ be a $d\times d$-matrix, $\Lambda
\subseteq \rdd$ a lattice, 
and let $\mathfrak{a} \in \mathscr S'(\rdd )$
be such that $S_{g_1,g_2,\Lambda}=Op _A(\mathfrak{a})$.
Then $\mathfrak{a} \in M^{\infty ,p_0}(\rdd )$.
\end{prop}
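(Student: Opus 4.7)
My plan is to reduce to the Weyl calculus, identify the symbol as a lattice periodization of a cross-Wigner distribution, and then verify the $M^{\infty,p_0}$ bound by a direct short-time Fourier transform computation using the $p_0$-triangle inequality.

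First I would use Proposition~\ref{Prop:CalcTransfer}(ii), which shows that $Op_A(M^{\infty,p_0}(\rdd))$ is independent of $A$, to assume without loss of generality that $A=\tfrac12 I$, so $Op_A=(\cdot)^w$ is the Weyl quantization. The Weyl symbol of the rank-one operator $f \mapsto \langle f, h\rangle k$ is the cross-Wigner distribution $W(k,h)$, and the covariance identity $W(\pi(\lambda)g_2,\pi(\lambda)g_1)= T_\lambda W(g_2,g_1)$ (which follows directly from \eqref{def:WignerDistribution}) gives the closed form
\begin{equation*}
\mathfrak{a} = \sum_{\lambda \in \Lambda} T_\lambda \Phi, \qquad \Phi := W(g_2,g_1).
\end{equation*}

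Next I would verify that $\Phi \in M^{p_0}(\rdd)$. Using the identity $\Phi(x,\xi)= 2^d e^{4\pi i x\cdot\xi} V_{\check g_1} g_2(2x,2\xi)$ recorded after \eqref{def:WignerDistribution}, this reduces to the invariance of $M^{p_0}$ under dyadic dilations and under chirp multiplication (Remark~\ref{Rem:ModSpChirpInv}), together with the standard fact that $V_{\check g_1} g_2 \in M^{p_0}(\rdd)$ whenever $g_1,g_2 \in M^{p_0}(\rd)$.

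The heart of the proof is to show that lattice periodization sends $M^{p_0}(\rdd)$ into $M^{\infty,p_0}(\rdd)$. Fixing a Schwartz window $G \in \mathcal S(\rdd)$ and using $V_G(T_\lambda \Phi)(z,\zeta) = e^{-2\pi i \lambda \cdot \zeta} V_G \Phi(z-\lambda,\zeta)$ together with the $p_0$-triangle inequality of Lemma~\ref{plone}(i), I get
\begin{equation*}
\sup_z |V_G \mathfrak{a}(z,\zeta)|^{p_0} \;\leq\; \sup_z \sum_{\lambda \in \Lambda} |V_G \Phi(z-\lambda,\zeta)|^{p_0} \;\lesssim\; \sum_{\lambda \in \Lambda} \|V_G\Phi(\cdot,\zeta)\|_{L^\infty(\lambda+Q)}^{p_0},
\end{equation*}
where $Q$ is a fundamental domain of $\Lambda$, the last step following from reducing $z$ to $Q$ and reindexing. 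Integrating in $\zeta$ and then applying Fubini, the proof reduces to showing that the mixed Wiener-amalgam quantity on the right is finite, which follows because the smoothness of the Schwartz window $G$ upgrades the mere membership $V_G\Phi \in L^{p_0}(\mathbb R^{4d})$ (i.e.\ $\Phi \in M^{p_0}$) to the amalgam control $V_G\Phi \in W(C_b,L^{p_0})(\mathbb R^{4d})$.

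The main obstacle is this last amalgam upgrade: one must exploit the decay induced by a Schwartz window to convert the pointwise $p_0$-triangle bound into an integrated estimate that matches the $M^{\infty,p_0}$ quasi-norm. This is where the quasi-Banach range $p_0<1$ requires care, since duality is unavailable and one must rely on direct pointwise STFT calculations and the interplay between the amalgam $W(C_b,L^{p_0})$ and the modulation space $M^{p_0}$.
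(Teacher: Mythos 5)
Your argument is correct in outline, but it is a genuinely different route from the paper's. The paper reduces to the Kohn--Nirenberg quantization (via Remark~\ref{Rem:ModSpChirpInv}), writes the KN symbol of $S_{g_1,g_2,\Lambda}$ explicitly as a double lattice sum, chooses the special window $\Psi(x,\xi)=\psi(x)\overline{\widehat\psi(\xi)}e^{-2\pi i x\cdot\xi}$, computes $V_\Psi\mathfrak a$ in closed form, samples it on a sufficiently fine lattice $\ep\Lambda^4$ so that the discrete characterization of $M^{\infty,p_0}$ from \cite{GaS04} applies, and closes with the convolution inequality $\ell^1\ast\ell^1\subseteq\ell^1$ applied to $|V_\psi g_1|^{p_0}$ and $|V_\psi g_2|^{p_0}$, which yields the quantitative bound $\nm{\mathfrak a}{M^{\infty,p_0}}\lesssim\nm{g_1}{M^{p_0}}\nm{g_2}{M^{p_0}}$. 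You instead reduce to the Weyl calculus, identify the symbol as the $\Lambda$-periodization of the cross-Wigner distribution $W(g_2,g_1)$, and split the proof into two modular facts: (a) $W(g_2,g_1)\in M^{p_0}(\rdd)$ for $g_1,g_2\in M^{p_0}(\rd)$, and (b) periodization maps $M^{p_0}(\rdd)$ into $M^{\infty,p_0}(\rdd)$. Your approach is more conceptual and reusable, while the paper's is a single self-contained computation that produces the norm estimate directly; both ultimately lean on the same quasi-Banach machinery of \cite{GaS04}.

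Two points need to be firmed up for your plan to be complete. First, the "amalgam upgrade" ($\Phi\in M^{p_0}$, $G\in\cS$ implies $V_G\Phi\in W(C_b,L^{p_0})(\bR^{4d})$) is exactly the Galperin--Samarah sampling lemma for quasi-Banach modulation spaces; you assert it but should cite or reprove it (e.g.\ via the pointwise domination $|V_G\Phi|\lesssim |V_{G_0}\Phi|\ast|V_GG_0|$ with $G_0$ Gaussian), since it is the step where $p_0<1$ genuinely matters; similarly, fact (a) should be justified by the standard factorization of $V_{V_{\phi_2}\phi_1}(V_{\check g_1}g_2)$ into a product of two STFTs, which gives $\nm{V_{\check g_1}g_2}{M^{p_0}}\asymp\nm{g_1}{M^{p_0}}\nm{g_2}{M^{p_0}}$. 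Second, Remark~\ref{Rem:ModSpChirpInv} concerns the operators $e^{i(AD_\xi)\cdot D_x}$, not multiplication by the chirp $e^{4\pi i x\cdot\xi}$; the fact you actually need is that chirp multiplication preserves $M^{p_0,p_0}$, which is true because it induces a shear of $|V_gf|$ in the time-frequency plane and both Lebesgue exponents coincide -- state it this way (it would fail for $M^{p,q}$ with $p\neq q$). With these repairs your proof is sound.
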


\par

\begin{proof}
In view of Remark~\ref{Rem:ModSpChirpInv} we may assume that $A=0$. 
The Weyl symbol of
$S_{g_1,g_2,\Lambda}$ with $g_1=g_2$ was calculated in 
\cite[p.12]{AFK14}, and by 
similar arguments it follows that
the Kohn-Nirenberg symbol is 
\begin{align}
	\mathfrak{a} (x,\xi)
	&= \sum_{(l, \lambda ) \in \Lambda} g_1(x-l)
	\overline{ \widehat g_2(\xi-\lambda) }
	e^{2\pi i(x-l)\cdot (\lambda-\xi)}.
\end{align}

\par

In order to estimate the $M^{\infty ,p_0}$
norm of $\mathfrak{a}$ we choose the window
$\Psi \in \mathcal{S} (\rdd)\setminus \{0\}$ as
$$
\Psi(x,\xi):=\psi(x)
\overline{\widehat{\psi}(\xi)}
e^{-2\pi ix\cdot \xi}.
$$
By straightforward applications of
Fourier inversion formula, it follows that
\begin{align}
    (V_{\Psi}\mathfrak{a} )(z,w)
    =
    \sum_{l, \lambda \in \Lambda}
    e^{-2\pi i(y\cdot \xi+l\cdot \eta )} 
    (V_\psi g_1)(x-l,\xi +\eta -\lambda)
    \cdot 
    \overline{(V_\psi g_2)(x+y-l,\xi - \lambda)},
\label{Eq:STFTFrameOpPseudo}
\end{align}
%
(see e.g. \cite{GS07}) with $z=(x,\xi)\in \rd\times \rd $ and 
$w= ( \eta, y) \in \rd\times \rd
$.

\par

Now let $\Lambda ^2=\Lambda \times \Lambda$,
and $\Lambda ^4=\Lambda ^2\times \Lambda ^2$ and choose 
$\ep =\frac 1N$ with integer $N\ge 1$ large enough, 
such that
$$
\{ \psi (x-m)e^{2\pi ix\cdot \mu }\}
_{m,\mu \in \ep \Lambda}
\quad \text{and}\quad
\{ \Psi (z-w_1)e^{2\pi iz\cdot w_2 }\}
_{w_1,w_2\in \ep \Lambda ^2},
$$
are Gabor frames of $L^2(\rd )$ and $L^2(\rdd )$,
respectively.
Then
$$
\nm {g_j}{M^{p_0}}
\asymp
\nm {V_\psi g_j}{\ell ^{p_0}(\ep \Lambda ^2)}
\quad \text{and}\quad
\nm {\mathfrak{a}}{M^{\infty ,p_0}}
\asymp
\nm {V_\Psi \mathfrak{a}}
{\ell ^{\infty ,p_0}(\ep \Lambda ^4)},
$$
in view of a result in  \cite{GaS04}.

\par

Let $F_j=|V_\psi g_j|$.
If $w_1=(m,\mu )\in \ep \Lambda ^2$
and $w_2=(\nu,n )\in \ep \Lambda ^2$, then
\eqref{Eq:STFTFrameOpPseudo}
yields
\begin{align*}
|(V_{\Psi}\mathfrak{a} )(w_1,w_2)|^{p_0}
&\le
\sum_{l, \lambda \in \Lambda}
F_1(m-l,\mu +\nu -\lambda)^{p_0}
F_2(m+n-l,\mu -\lambda)^{p_0}
\\[1ex]
&=
(|\check F_1|^{p_0}*|F_2|^{p_0})(Tw_2),
\qquad Tw_2=(n,-\nu ).
\end{align*}
The right-hand side  does not depend on $w_1$, and therefore 
\begin{multline*}
\nm {\mathfrak{a}}{M^{\infty ,p_0}}
\asymp
\nm {V_\Psi \mathfrak{a}}
{\ell ^{\infty ,p_0}(\ep \Lambda ^4)}
\lesssim
\nm {{\check F}_1^{p_0}*F_2^{p_0}}
{\ell ^1(\ep \Lambda ^2)}^{\frac 1{p_0}}
\\[1ex]
\le
\big (
\nm {F_1^{p_0}}{\ell ^1(\ep \Lambda ^2)}
\nm {F_2^{p_0}}{\ell ^1(\ep \Lambda ^2}
\big )^{\frac 1{p_0}}
=
\nm {F_1}{\ell ^{p_0}(\ep \Lambda ^2)}
\nm {F_2}{\ell ^{p_0}(\ep \Lambda ^2)}
\asymp
\nm {g_1}{M^{p_0}}\nm {g_2}{M^{p_0}} \, .
\end{multline*}
Thus $\mathfrak{a} \in M^{\infty ,p_0}$. 
\end{proof}

\vspace{ 4 mm}

\subsection{Almost Diagonalization of Pseudodifferential Operators}

In this section we list several   characterizations of symbols in 
$\mathfrak{a}$ of $M^{\infty, p_0}(\rdd)$, $p_0 \in (0,\infty]$. The
following characterization of a symbol class by means of the almost
diagonalization of the associated \psdo\ was found in \cite[Theorem
3.2]{Gro06} for $p_0=1$ and subsequently generalized to the full range
of $p_0$ in \cite[Thm.~3.2]{BaCo22} (with almost the same proof).  
This characterization helps to deduce spectral properties of   \psdo s
from spectral properties of infinite matrices. 
We only formulate the unweighted case of \cite[Thm.~3.2]{BaCo22}, which is sufficient for this paper. 

\begin{tm}{(Almost Diagonalization).}\label{tm:AlmostDiag}
	We fix a non-zero $g\in \s\backslash\{0\}$ and a lattice $\Lambda \subseteq \Rtdst$ such that  $\mathcal{G}(g,\Lambda)$ is a Gabor frame for $L^2(\rd)$. Then for any $p_0\in (0,\infty]$ the following properties are equivalent:
	\begin{itemize}
		\item[i)] $\mathfrak{a} \in M^{\infty,p_0}(\Rtdst)$.
		\item[ii)] $\mathfrak{a} \in \mathcal{S}'(\Rtdst)$ and
                  there exists a function $H \in L^{p_0}(\Rtdst)$, in
                  fact $H\in W(C_b,L^{p_0})$,   such
                  that 
		\begin{align}
			|\langle \mathfrak{a}^w \pi (z)g, \pi (w)g 
			\rangle| \leq H(w-z) \qquad \forall w,z \in \Rtdst.
		\end{align}
		\item[iii)] $\mathfrak{a} \in \mathcal{S}'(\Rtdst)$ and there exists a sequence $h \in \ell^{p_0}(\Lambda)$ such that
\begin{align}
	|\langle \mathfrak{a}^w \pi(\rho)g, \pi(\lambda)g \rangle
	| \leq h(\lambda-\rho) \qquad \forall \lambda,\rho \in \Lambda.
\end{align}
\end{itemize}
\end{tm}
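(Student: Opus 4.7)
The plan is to route the entire proof through the covariance identity
$$
\langle \mathfrak{a}^w \pi(z) g, \pi(w) g\rangle
=
e^{2\pi i\theta(w,z)}\, V_{\Phi} \mathfrak{a}\!\left(\tfrac{w+z}{2},\; \cJ(w-z)\right),
$$
where $\Phi := W(g,g) \in \cS(\rdd)\setminus\{0\}$ is the Wigner distribution of $g$, $\cJ(x,\xi) = (\xi,-x)$ is the symplectic rotation, and $\theta$ is an (irrelevant) phase. This is a well-known identity obtained by writing $\langle \mathfrak{a}^w \pi(z)g, \pi(w)g\rangle = \langle \mathfrak{a}, W(\pi(w)g,\pi(z)g)\rangle$ and expanding the cross-Wigner distribution. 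Because $g$ is Schwartz, $\Phi$ is Schwartz on $\rdd$, hence an admissible window for the quasi-norm on $M^{\infty,p_0}(\rdd)$ by the window-invariance recalled in Section~4.1.

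For \textbf{(i)$\Leftrightarrow$(ii)} I would simply push this identity through the definition of the $M^{\infty,p_0}$ quasi-norm. The condition $|\langle \mathfrak{a}^w \pi(z)g, \pi(w)g\rangle| \le H(w-z)$ translates, via the identity and the change of variables $u = (w+z)/2,\ \zeta = \cJ(w-z)$, into a bound $\sup_u |V_\Phi \mathfrak{a}(u,\zeta)| \le H(\cJ^{-1}\zeta)$, which is exactly a Wiener-amalgam form of $\mathfrak{a} \in M^{\infty,p_0}$. The upgrade from an $L^{p_0}$ envelope to a $W(C_b,L^{p_0})$ envelope in the direction (i)$\Rightarrow$(ii) is a standard local-maximization trick: replace $H_0(\zeta) := \sup_u |V_\Phi\mathfrak{a}(u,\cJ\zeta)|$ by its supremum on unit cubes, and control the latter by $V_{\widetilde{\Phi}}\mathfrak{a}$ for a slightly larger Schwartz window $\widetilde\Phi$, using continuity of the STFT.

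The implication \textbf{(ii)$\Rightarrow$(iii)} is then an immediate application of Lemma~\ref{ConnWienAmalgSpaceSequenceSpace2}: the set $\Lambda - \Lambda \subseteq \rdd$ is relatively separated, so sampling the envelope $H \in W(C_b,L^{p_0})$ along it produces $h := H|_{\Lambda-\Lambda} \in \ell^{p_0}(\Lambda)$.

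\textbf{The main obstacle is (iii)$\Rightarrow$(i).} Here I would invert the flow by reconstructing the symbol from its matrix coefficients. Since $\cG(g,\Lambda)$ is a Gabor frame and $g\in\cS(\rd)$, Janssen's theorem provides a Schwartz dual window $\gamma = S_{g,\Lambda}^{-1}g \in \cS(\rd)$, and the Gabor expansion lets one write $\mathfrak{a}^w$ as a weak-$*$ convergent double sum of rank-one operators $f \mapsto \langle f,\pi(\rho)\gamma\rangle\pi(\lambda)g$ weighted by the matrix entries. Applying $V_\Phi$ to the resulting symbol and invoking the covariance identity on each term produces a pointwise estimate
$$
|V_\Phi \mathfrak{a}(u,v)|
\;\lesssim\;
\sum_{\lambda,\rho \in \Lambda} h(\lambda-\rho)\,
F\!\left(u - \tfrac{\lambda+\rho}{2},\; v - \cJ(\lambda-\rho)\right),
$$
where $F \in \cS(\rdd\times\rdd)$ is built from $V_\psi g$ and $V_\psi \gamma$ for a fixed test window $\psi$. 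Taking the $L^{\infty,p_0}$ quasi-norm and using the $p_0$-triangle inequality of Lemma~\ref{plone}(ii) together with the Young-type inequality Lemma~\ref{plone}(iii) collapses the double sum and yields a bound of the shape $\|h\|_{\ell^{p_0}}\cdot\|F\|_{\cdot}$, hence $\mathfrak{a}\in M^{\infty,p_0}$. The delicate points are: first, one must transfer the hypothesis (iii), which is stated with the original window $g$, to the analogous estimate with the dual window $\gamma$ (using the almost-diagonalization of the change-of-window operator, itself a convolution-dominated matrix because $g,\gamma \in \cS \subseteq M^{p_0}$); second, because $p_0 < 1$ one must be careful that all interchanges of summation are justified via the absolute-convergence provided by the $\ell^{p_0}$-triangle inequality rather than the ordinary one.
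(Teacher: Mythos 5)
The paper does not prove Theorem~\ref{tm:AlmostDiag} at all -- it quotes it from \cite[Thm.~3.2]{Gro06} (case $p_0=1$) and \cite[Thm.~3.2]{BaCo22} (all $p_0$) -- and your proposal reconstructs essentially that proof: the covariance identity $|\langle \mathfrak{a}^w\pi(z)g,\pi(w)g\rangle| = |V_{W(g,g)}\mathfrak{a}\big(\tfrac{w+z}{2},\mathcal{J}(w-z)\big)|$ together with window-independence of $M^{\infty,p_0}$ for (i)$\Leftrightarrow$(ii), sampling of the amalgam envelope for (ii)$\Rightarrow$(iii), and the Gabor expansion with the Schwartz dual window combined with the $p_0$-triangle inequality and $\ell^{p_0}\ast\ell^{p_0}\subseteq\ell^{p_0}$ for (iii)$\Rightarrow$(i). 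This is the same approach as the cited source, and the delicate points you flag (transferring the hypothesis from $g$ to the dual window $\gamma$, and absolute convergence of the double sums when $p_0<1$) are exactly the right ones and are handled correctly.
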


As an immediate application we obtain the following result.

\begin{cor}
	Under the hypotheses of Theorem \ref{tm:AlmostDiag} we assume that $T:\s \rightarrow \mathcal{S}'(\rd)$ is continuous and satisfies 
	\begin{align*}
		|\langle T \pi(\rho)g, \pi(\lambda)g \rangle| \leq h(\lambda-\rho) \qquad \forall \lambda,\rho \in \Ztdst.
	\end{align*}
	for some $h \in \ell^{p_0}(\Ztdst)$. Then $T=\mathfrak{a} ^w$ for some $\mathfrak{a}  \in M^{\infty, p_0}(\rdd)$.
\end{cor}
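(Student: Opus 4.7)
The plan is to reduce the corollary to the implication \textrm{(iii)}$\Rightarrow$\textrm{(i)} in Theorem~\ref{tm:AlmostDiag}. Once a Weyl symbol for $T$ is produced, the hypothesis translates verbatim into condition \textrm{(iii)} of that theorem, and there is nothing further to do. Thus the only genuine step is the existence of a tempered distribution $\mathfrak{a} \in \mathcal{S}'(\rdd)$ with $T = \mathfrak{a}^{w}$.

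First I would invoke the Schwartz kernel theorem: since $T \colon \mathcal{S}(\rd) \to \mathcal{S}'(\rd)$ is continuous, it possesses a kernel $K_T \in \mathcal{S}'(\rdd)$. The passage from the kernel to the Weyl symbol is effected by the change of variables $(x,y) \mapsto \bigl( \tfrac{x+y}{2},\, x-y\bigr)$, followed by a partial Fourier transform in the second slot; both operations are topological isomorphisms of $\mathcal{S}'(\rdd)$ (cf.\ Proposition~\ref{Prop:CalcTransfer} and the general correspondence between kernels and Weyl symbols). This produces a unique $\mathfrak{a} \in \mathcal{S}'(\rdd)$ with $T = \mathfrak{a}^{w}$.

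With $\mathfrak{a}$ in hand, the standing hypothesis becomes
\begin{equation*}
|\langle \mathfrak{a}^{w} \pi(\rho)g,\, \pi(\lambda)g\rangle| \leq h(\lambda-\rho), \qquad \lambda,\rho \in \Lambda,
\end{equation*}
for some $h \in \ell^{p_0}(\Lambda)$, which is exactly condition \textrm{(iii)} of Theorem~\ref{tm:AlmostDiag}. Applying the implication \textrm{(iii)}$\Rightarrow$\textrm{(i)} of that theorem yields $\mathfrak{a} \in M^{\infty,p_0}(\rdd)$, finishing the proof.

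There is essentially no obstacle to overcome: the entire content of the corollary has been loaded into Theorem~\ref{tm:AlmostDiag}, and the sole technical point is citing the Schwartz kernel theorem together with the elementary identification between kernels and Weyl symbols to pass from an abstractly continuous map $\mathcal{S} \to \mathcal{S}'$ to a genuine Weyl operator.
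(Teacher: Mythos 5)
Your proposal is correct and follows the same route as the paper: the Schwartz kernel theorem (with the standard kernel-to-Weyl-symbol correspondence) gives $\mathfrak{a}\in\mathcal{S}'(\rdd)$ with $T=\mathfrak{a}^w$, and then the implication (iii)$\Rightarrow$(i) of Theorem~\ref{tm:AlmostDiag} yields $\mathfrak{a}\in M^{\infty,p_0}(\rdd)$. The paper's proof is exactly this two-line argument, so nothing is missing.
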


\begin{proof}
	Because of the continuity of $T:\s \rightarrow
        \mathcal{S}'(\rd)$  the Schwartz's kernel theorem asserts the existence of a symbol $\mathfrak{a}  \in \mathcal{S}'(\mathbb{R}^{2d})$ with $T=\mathfrak{a} ^w$.  An application of Theorem  \ref{tm:AlmostDiag} yields the claim. 
\end{proof}

\vspace{ 4 mm}

\subsection{Matrix Formulation} \label{subsection:MatrixFormulation}

We fix a lattice $\Lambda$ and a window $g \in \s
\backslash\{0\}$ such that $\mathcal{G}(g,\Lambda)$ is a  frame
with dual window $\gamma $ and  associated  Gabor expansion \eqref{GaborExpansionOfFunction} and
\eqref{GaborExpansionOfFunction2}. 

For the manipulations to be meaningful, 
we assume  of  a symbol $\mathfrak{a}  \in \mathcal{S}'(\rd)$ that the
Weyl operator $\mathfrak{a} ^w$ is bounded  on $M^p(\rd)$  for some $p
\in (0,\infty]$.  Just keep in mind, that due to 
Proposition \ref{Prop:EmbeddingsModulationSpaces} $M^p(\rd) 
\subseteq M^{\infty}(\rd)$. On account of the Gabor expansion 
\eqref{GaborExpansionOfFunction} we have $D_g C_{\gamma}f=f$ 
for all $f\in M^p(\rd) \subseteq M^{\infty}(\rd)$.  Together 
with the continuity of $\mathfrak{a} ^w$ on $M^p(\rd)$ we obtain for 
all $f\in M^p(\rd)$
\begin{align}\label{eq:CoefficientOperator}
	C_g(\mathfrak{a} ^wf)(\lambda) 
	=
	\langle \mathfrak{a} ^w f, \pi(\lambda) g \rangle 
	=
	\sum_{\mu \in \Lambda} \langle f, \pi(\mu)\gamma \rangle
	\langle \mathfrak{a} ^w \pi(\mu)g, \pi(\lambda)g \rangle
	\quad \text{for all } \lambda \in \Lambda.
\end{align}
We define the matrix $M(\mathfrak{a} )$ of $\mathfrak{a}^w$ with
respect to the frame $\cG (g,\Lambda )$ by its entries
\begin{align*}
	M(\mathfrak{a} )_{\lambda, \mu} = \langle \mathfrak{a} ^w
  \pi(\mu) g, \pi(\lambda)g \rangle \qquad \text{for all } \lambda,
  \mu \in \Lambda \, .
\end{align*}
We  can then  recast \eqref{eq:CoefficientOperator} as 
\begin{align}\label{eq:CoefficientOperator2}
	C_g(\mathfrak{a} ^wf)=  M(\mathfrak{a} ) C_{\gamma} f.
\end{align}
Likewise, Theorem~\ref{tm:AlmostDiag} now reads as follows.
\begin{cor}\label{cor:PropertyMa}
  Let $\mathfrak{a}\in \cS ' (\rdd )$. Then $\mathfrak{a}\in M^{\infty
    ,p_0}(\rdd )$, \fif\ $M(\mathfrak{a}) \in \cC ^{p_0}$. 
\end{cor}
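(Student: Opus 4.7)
The plan is to reduce the statement directly to the almost diagonalization theorem (Theorem~\ref{tm:AlmostDiag}), which provides the equivalence between the modulation space membership of a symbol $\mathfrak{a}$ and the off-diagonal decay of the matrix coefficients $\langle \mathfrak{a}^w \pi(\rho)g, \pi(\lambda)g\rangle$. Since the matrix $M(\mathfrak{a})$ is defined precisely by these entries evaluated on the lattice $\Lambda$, the corollary should essentially be a rephrasing of that theorem using the language of convolution-dominated matrices.

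For the forward direction, I would assume $\mathfrak{a} \in M^{\infty, p_0}(\rdd)$ and apply the implication (i)$\Rightarrow$(ii) of Theorem~\ref{tm:AlmostDiag}. This yields a function $H \in W(C_b, L^{p_0})$ with
\[
|\langle \mathfrak{a}^w \pi(z)g, \pi(w)g\rangle| \leq H(w-z) \qquad \text{for all } w,z \in \rdd.
\]
Specializing to $z=\mu \in \Lambda$ and $w=\lambda \in \Lambda$ gives $|M(\mathfrak{a})_{\lambda,\mu}| \leq H(\lambda-\mu)$, which is exactly the defining condition for $M(\mathfrak{a}) \in \cC^{p_0}(\Lambda)$ in Definition~\ref{Def:ConvDominatedMatrix}.

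For the converse direction, I would assume $M(\mathfrak{a}) \in \cC^{p_0}$, so there exists an envelope $H \in W(C_b, L^{p_0})$ with $|M(\mathfrak{a})_{\lambda,\mu}| \leq H(\lambda-\mu)$ for all $\lambda,\mu \in \Lambda$. By Lemma~\ref{ConnWienAmalgSpaceSequenceSpace2}, the sampled sequence $h := (H(\lambda))_{\lambda \in \Lambda}$ belongs to $\ell^{p_0}(\Lambda)$ (since $\Lambda$, being a lattice, is relatively separated), and it satisfies the discrete envelope condition $|\langle \mathfrak{a}^w \pi(\mu)g, \pi(\lambda)g\rangle| \leq h(\lambda-\mu)$. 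This is precisely condition (iii) of Theorem~\ref{tm:AlmostDiag}, hence $\mathfrak{a} \in M^{\infty, p_0}(\rdd)$.

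There is no significant obstacle here: both directions are essentially dictionary translations between the two equivalent formulations already packaged in Theorem~\ref{tm:AlmostDiag}, combined with the sampling lemma that relates continuous Wiener amalgam envelopes to discrete $\ell^{p_0}$ envelopes on relatively separated sets. The only minor point worth flagging is that we implicitly need $\mathfrak{a}^w$ to make sense as an operator on time-frequency shifts $\pi(\mu)g$ with $g\in \cS(\rd)$; but since $\mathfrak{a}\in \cS'(\rdd)$ and $g \in \cS(\rd)$, the pairing $\langle \mathfrak{a}^w\pi(\mu)g, \pi(\lambda)g\rangle = \langle \mathfrak{a}, W(\pi(\lambda)g, \pi(\mu)g)\rangle$ is well-defined as the action of the tempered distribution $\mathfrak{a}$ on the Schwartz function $W(\pi(\lambda)g, \pi(\mu)g)$, so no additional care is required.
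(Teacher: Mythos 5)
Your proof is correct and follows exactly the route the paper intends: the corollary is stated as a direct reformulation of Theorem~\ref{tm:AlmostDiag} via the identification $M(\mathfrak{a})_{\lambda,\mu}=\langle \mathfrak{a}^w\pi(\mu)g,\pi(\lambda)g\rangle$, using (i)$\Rightarrow$(ii) restricted to the lattice for one direction and the sampled envelope (Lemma~\ref{ConnWienAmalgSpaceSequenceSpace2}, together with the fact that $\Lambda-\Lambda=\Lambda$ for a lattice) to invoke (iii)$\Rightarrow$(i) for the other. No gaps; this matches the paper's (implicit) argument.
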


 The matrix $P$ associated to the identity operator has  the entries
 \begin{align} \label{xy}
	P_{\lambda, \mu} = \langle  \pi(\mu) g, \pi(\lambda)g \rangle,
   \qquad \text{for all } \lambda, \mu \in \Lambda.  
\end{align}
To simplify the analysis of $P$, we assume from now on that $\cG (g,
\Lambda )$ is a tight frame with $S_{g,\Lambda } = \mathrm{I}$.  As
mentioned already, tight frames with a window in $\mathcal{S}(\rd )$
always exist. 
The matrix $P$ has the following properties. 
\begin{lemma}\label{lemma:PropertiesProjection}
Assume  that $\cG (g,
\Lambda )$ is a tight frame with $g\in \mathcal{S}(\rd )$. Let $0<p_0 \leq p \leq \infty$ and $\mathfrak{a}  \in \mathcal{S}'(\Rtdst)$ be such that the associated pseudodifferential operator $\mathfrak{a} ^w$ is bounded on $M^p$. Then
\begin{itemize}
	\item[(i)] $P$ is a projection from $\ell^p(\Lambda)$ to range
          $C_g(M^p)$, i.e., $Pc=c \in \ell ^p(\Lambda )$ \fif\  
          there  exists $f\in M^p$ 
such that $c_\lambda =\scal f{\pi(\lambda)g} = (C_gf)_ \lambda$ for  $\lambda \in \Lambda$.

	\item[(ii)]  $PM(\mathfrak{a} )=M(\mathfrak{a} )$ and $M(\mathfrak{a} )P=M(\mathfrak{a} )$.
        \item[(iii)]   	$P \in \mathcal{C}^{p}(\Lambda)$.
	\item[(iv)] For $\mathfrak{a}  \in M^{\infty,p_0}(\Rtdst)$ we have $M(\mathfrak{a} )+I-P \in \mathcal{C}^{p_0}(\Lambda)$.
\end{itemize}
\end{lemma}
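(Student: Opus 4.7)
The plan is to exploit the tight-frame identity $D_g C_g = \mathrm{I}$ on $M^p$ throughout, which gives $P = C_g D_g$ and hence $P^2 = C_g (D_g C_g) D_g = C_g D_g = P$.

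For (i), the formula $P = C_g D_g$ shows at once that $P$ is an idempotent bounded operator from $\ell^p(\Lambda)$ to itself (boundedness of $C_g$ and $D_g$ with respect to $\ell^p$ and $M^p$ follows from \eqref{eq:normequivalence}). If $c \in C_g(M^p)$, then $c = C_g f$ with $f \in M^p$, and since $\gamma = g$ for the tight frame, $D_g c = f$, so $Pc = C_g f = c$. Conversely, if $Pc = c$ and $c \in \ell^p(\Lambda)$, set $f := D_g c \in M^p$; then $c = C_g f \in C_g(M^p)$.

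For (ii), the relation $PM(\mathfrak{a}) = M(\mathfrak{a})$ follows from \eqref{eq:CoefficientOperator2} (with $\gamma = g$): for every $c \in \ell^p(\Lambda)$, $M(\mathfrak{a}) c = C_g(\mathfrak{a}^w D_g c)$ lies in $C_g(M^p)$, and hence is fixed by $P$. For $M(\mathfrak{a}) P = M(\mathfrak{a})$, I expand $\pi(\mu) g = \sum_{\nu} \langle \pi(\mu) g, \pi(\nu) g \rangle \, \pi(\nu) g$ using the tight-frame expansion \eqref{GaborExpansionOfFunction}, apply $\mathfrak{a}^w$ (continuous on $M^p$), and pair with $\pi(\lambda) g$; the interchange of $\mathfrak{a}^w$ with the convergent sum is justified by the $M^p$-boundedness of $\mathfrak{a}^w$ together with convergence of the Gabor expansion in $M^p$ (hence also weakly). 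This gives exactly $(M(\mathfrak{a}) P)_{\lambda,\mu} = M(\mathfrak{a})_{\lambda,\mu}$.

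For (iii), a direct computation using $\pi(-\lambda)\pi(\mu) = e^{i\theta(\lambda,\mu)} \pi(\mu - \lambda)$ yields $|P_{\lambda,\mu}| = |V_g g(\mu - \lambda)|$. Since $g \in \mathcal{S}(\rd)$, we have $V_g g \in \mathcal{S}(\rdd) \subseteq W(C_b, L^{q})$ for every $q > 0$; in particular $V_g g$ is an admissible envelope, so $P \in \mathcal{C}^{q}(\Lambda)$ for every $q>0$ (and in particular for any $p$ in the stated range; this is stronger than what is claimed).

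For (iv), I combine three ingredients. First, Corollary~\ref{cor:PropertyMa} gives $M(\mathfrak{a}) \in \mathcal{C}^{p_0}(\Lambda)$. Second, part (iii) gives $P \in \mathcal{C}^{p_0}(\Lambda)$. Third, the identity matrix $I$ admits the envelope $H_0 \in W(C_b, L^{p_0})$ obtained by choosing any nonnegative continuous compactly supported function with $H_0(0) \geq 1$ (a bump around the origin); hence $I \in \mathcal{C}^{p_0}(\Lambda)$. Since $\mathcal{C}^{p_0}(\Lambda)$ is a quasi-Banach space (so closed under finite sums), we conclude that $M(\mathfrak{a}) + I - P \in \mathcal{C}^{p_0}(\Lambda)$.

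The main technical point is the interchange of $\mathfrak{a}^w$ with the Gabor series in (ii): everything else is either an algebraic identity from tightness or a direct envelope estimate, but justifying $\mathfrak{a}^w \bigl(\sum_\nu \langle \pi(\mu) g, \pi(\nu) g\rangle \pi(\nu) g\bigr) = \sum_\nu \langle \pi(\mu) g, \pi(\nu) g\rangle \,\mathfrak{a}^w \pi(\nu) g$ in the appropriate topology requires invoking continuity of $\mathfrak{a}^w$ on $M^p$ together with $M^p$-convergence of the tight frame expansion.
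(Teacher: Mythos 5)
Your argument is correct and essentially the paper's own: $P^2=P$ and the range characterization come from the tight-frame identity $S_{g,\Lambda}=\mathrm{Id}$ (the paper computes entrywise, you write $P=C_gD_g$ with $D_gC_g=\mathrm{Id}$, which is the same), part (ii) is the same straightforward frame-operator computation, part (iii) uses the rapid decay of $V_gg$ for $g\in\mathcal{S}(\rd)$, and part (iv) is the same sum-of-envelopes argument (you make explicit the bump envelope for $I$, which the paper leaves implicit). The only cosmetic caveat is that \eqref{eq:normequivalence} literally gives only the analysis-side equivalence, so the boundedness of $D_g:\ell^p(\Lambda)\to M^p(\rd)$ should either be cited as standard or derived as the paper implicitly does, by applying \eqref{eq:normequivalence} to $C_gD_gc=Pc\in\ell^p(\Lambda)$.
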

\begin{proof}
(i)   For all $\lambda, \nu \in \Lambda$ the assumption that
$S_{g,\Lambda } = \mathrm{Id}$ and  
 \eqref{Eq:FrameOp} imply that 
\begin{align*}
	(P^2)_{\lambda, \nu} 
	&= \Big\langle \sum_{\mu \in \Lambda} \langle \pi(\nu) g, \pi(\mu) g \rangle \pi(\mu) g, \pi(\lambda) g \Big\rangle
	= \langle S_{g,\Lambda }\pi (\nu ) g,  \pi (\lambda ) g
  \rangle = \langle \pi(\nu) g, \pi(\lambda) g\rangle \\
	&=P_{\lambda, \nu}.
\end{align*}
Consequently  $P^2=P$ and $P$ is a projection. 
Next  let $c \in \ell^p(\Lambda)$ with $Pc=c$. With
$f=\sum_{\mu \in \Lambda} c_{\mu} \pi(\mu) g$ we have  for $\lambda \in \Lambda$, 
\begin{align*}
	c_{\lambda} 
	= (Pc)_{\lambda}
	=\sum_{\mu \in \Lambda} \langle \pi(\mu) g, \pi(\lambda) g \rangle c_{\mu}
	=\langle f,\pi(\lambda) g \rangle
	=(C_g f)_{\lambda}\, ,
\end{align*}
whence $c = C_gf$. Then the norm-equivalence
\eqref{eq:normequivalence} yields 
$f \in M^p(\rd )$.

Conversely, assume that  $c=C_g f$ for some $f \in M^p(\rd)$. Then  we get due to \eqref{GaborExpansionOfFunction}
\begin{align*}
	(Pc)_{\lambda} 
	= \langle \sum_{\mu \in \Lambda} \langle f, \pi(\mu) g \rangle  \pi(\mu) g , \pi(\lambda) g \rangle
	= \langle S_{g,\Lambda } f , \pi (\lambda ) g \rangle = \langle f, \pi(\lambda) g \rangle
	=c_{\lambda}, \quad \lambda \in \Lambda \, ,
\end{align*}
and (i) holds.

(ii)  is proved similarly by straight forward calculations using 
\eqref{Eq:FrameOp}. 

(iii) If $g\in \cS (\rd )$, then also $V_gg\in \cS (\rdd )$, e.g., by
\cite[Thm.~11.2.5]{Gro01}. In particular, this implies that  for every
$N\geq 0$
$$
|V_gg(z) |  \lesssim (1+|z|)^{-N} \, , 
$$
and thus
$$
|P_{ \lambda,\mu }| =  |\langle \pi (\mu )g,\pi (\lambda )g\rangle | =
|V_gg(\lambda -\mu )| \lesssim (1+|\lambda -\mu |)^{-N}\, .
$$
By choosing $N$ large enough, we see that $H(z) = (1+|z|)^{-N}$ is in
$W(C_b,L^{p_0})$ and consequently $P\in \cC ^{p_0}(\Lambda )$. 

(iv) Since all matrices $P,\mathrm{I}$ and $M(\mathfrak{a})$ are in $\cC
^{p_0}$, their sum 
   $M(\mathfrak{a})+ \mathrm{Id} + P$ is also in $\cC
   ^{p_0}$.
\end{proof}

\vspace{3mm}

\subsection{Spectral invariance.}

We  have already seen before that it is possible to relate  a pseudodifferential operator 
$\mathfrak{a}  ^w$ to  an infinite matrix $M(\mathfrak{a} )$.
It turns out that there is  a connection between the invertibility of $\mathfrak{a}  ^w$ and $M(\mathfrak{a} )$. 
\begin{lemma}\label{lemma:equivalenceInvertibility}
Assume  that $\cG (g,
\Lambda )$ is a tight frame with $g\in \mathcal{S}(\rd )$. 	Let $0<p\leq \infty$ and $\mathfrak{a}  \in
        \mathcal{S}'(\rdd)$ be such that  the associated Weyl operator
        $\mathfrak{a}  ^w$ is bounded  on $M^p$. Then
        $\mathfrak{a}  ^w$ is invertible  on $M^p$,  
if and only if
\begin{itemize}
\item[i)]
$
\nm {M(\mathfrak{a}  )Pc}p \gtrsim \nm {Pc}p\quad \text{ for all } c\in \ell ^p(\Lambda)
$ and
\item[ii)] for every $c_0\in P\ell ^p(\Lambda)$, there is a $c\in P\ell ^p(\Lambda)$ such that
$M(\mathfrak{a}  )Pc=Pc_0$.
\end{itemize}
Here the projection $P$ is defined as in \eqref{xy}. 
\end{lemma}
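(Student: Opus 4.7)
The plan is to transfer invertibility of $\mathfrak{a}^w$ on $M^p(\rd)$ to an invertibility question for $M(\mathfrak{a})$ acting on the subspace $P\ell^p(\Lambda) = C_g(M^p)$, using the intertwining identity
\[
C_g(\mathfrak{a}^w f) = M(\mathfrak{a})\,C_g f,\qquad f\in M^p(\rd),
\]
which follows from \eqref{eq:CoefficientOperator2} together with the standing assumption that $\cG(g,\Lambda)$ is tight with $S_{g,\Lambda}=\mathrm{I}$, so that the dual window equals $\gamma=g$. The bridge between the two worlds is provided by Lemma~\ref{lemma:PropertiesProjection}(i), which identifies $P\ell^p(\Lambda)$ with $C_g(M^p)$, and by the norm equivalence $\|f\|_{M^p}\asymp\|C_g f\|_p$ from \eqref{eq:normequivalence}.

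For the forward direction, suppose $\mathfrak{a}^w$ is invertible on $M^p$. Given $c\in\ell^p(\Lambda)$, Lemma~\ref{lemma:PropertiesProjection}(i) yields $f\in M^p$ with $Pc=C_g f$; using $M(\mathfrak{a})P=M(\mathfrak{a})$ from Lemma~\ref{lemma:PropertiesProjection}(ii) and the intertwining,
\[
\|M(\mathfrak{a})Pc\|_p = \|C_g(\mathfrak{a}^w f)\|_p \asymp \|\mathfrak{a}^w f\|_{M^p}
\gtrsim \|f\|_{M^p} \asymp \|C_g f\|_p = \|Pc\|_p,
\]
which is (i). For (ii), given $c_0\in P\ell^p(\Lambda)$ pick $f_0\in M^p$ with $c_0=C_g f_0$, set $f=(\mathfrak{a}^w)^{-1}f_0$ and $c=C_g f\in P\ell^p(\Lambda)$; then $M(\mathfrak{a})Pc=M(\mathfrak{a})c=C_g(\mathfrak{a}^w f)=c_0=Pc_0$.

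For the converse, assume (i) and (ii). Injectivity of $\mathfrak{a}^w$ is immediate: if $\mathfrak{a}^w f=0$, the intertwining gives $M(\mathfrak{a})C_g f=0$, and since $c:=C_g f$ satisfies $Pc=c$, condition (i) forces $Pc=0$, hence $f=D_g C_g f=0$ by the tight-frame reconstruction \eqref{GaborExpansionOfFunction}. For surjectivity, given $h\in M^p$ set $c_0:=C_g h\in P\ell^p(\Lambda)$, apply (ii) to obtain $c\in P\ell^p(\Lambda)$ with $M(\mathfrak{a})Pc=Pc_0$, and write $c=C_g f$ for some $f\in M^p$ via Lemma~\ref{lemma:PropertiesProjection}(i). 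Then
\[
C_g(\mathfrak{a}^w f) = M(\mathfrak{a})C_g f = M(\mathfrak{a})Pc = Pc_0 = c_0 = C_g h,
\]
and injectivity of $C_g$ on $M^p$ (again from $D_g C_g=\mathrm{I}$) yields $\mathfrak{a}^w f=h$. Boundedness of the inverse is then a direct consequence of (i) chained with the norm equivalences: $\|f\|_{M^p}\asymp\|Pc\|_p\lesssim\|M(\mathfrak{a})Pc\|_p\asymp\|\mathfrak{a}^w f\|_{M^p}$.

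There is no real obstacle here; the lemma is a transfer statement, and the only point that requires care is respecting the fact that $M(\mathfrak{a})$ and $P$ live on the ambient space $\ell^p(\Lambda)$, so one must systematically use $M(\mathfrak{a})=M(\mathfrak{a})P=PM(\mathfrak{a})$ to pass between $c$ and $Pc$ and to ensure that the range identification $C_g(M^p)=P\ell^p(\Lambda)$ is invoked on both sides of the argument. In particular no open-mapping theorem is needed, which is fortunate for the quasi-Banach range $p<1$: the quantitative bound in (i) plus the explicit preimage produced in (ii) give an inverse that is automatically bounded from $M^p$ to $M^p$.
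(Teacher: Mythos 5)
Your proposal is correct and follows essentially the same route as the paper: both directions rest on the intertwining identity $C_g(\mathfrak{a}^w f)=M(\mathfrak{a})C_g f$, the identification $P\ell^p(\Lambda)=C_g(M^p)$ from Lemma~\ref{lemma:PropertiesProjection}, and the norm equivalence \eqref{eq:normequivalence}, with the same construction of preimages for surjectivity. The only cosmetic difference is that the paper obtains injectivity in the converse direction directly from the lower bound $\|\mathfrak{a}^w f\|_{M^p}\gtrsim\|f\|_{M^p}$, whereas you argue it separately from $\mathfrak{a}^w f=0$; the content is the same.
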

\begin{proof}
Let $\mathfrak{a}  ^w$ be invertible on $M^p(\rd)$. 
Using \eqref{eq:CoefficientOperator2}, we obtain
\begin{align*}
	\|M(\mathfrak{a} ) C_g f\|_{p}
	= \|C_g (\mathfrak{a}  ^w f)\|_{p} 
	\asymp \|\mathfrak{a}  ^w f\|_{M^p}
	\gtrsim \|f\|_{M^p}
	\asymp \|C_g f\|_{p} \, ,
\end{align*}
which is (i). 
To prove (ii),  let $c_0 \in P \ell^p(\Lambda)$ be arbitrary. 
Then there exists  $h \in M^p(\rd)$ with $C_g h =c_0=Pc_0$ by Lemma~\ref{lemma:PropertiesProjection}. 
Since $\mathfrak{a}  ^w$ is bijective on $M^p(\rd)$, there is a $f \in M^p(\rd)$ such that 
\begin{align*}
	\mathfrak{a}  ^w f= h.
\end{align*}
Then we obtain for $c=C_g f$ due to Lemma \ref{lemma:PropertiesProjection} that $Pc=c$ and that
\begin{align*}
	M(\mathfrak{a} )Pc
	=M(\mathfrak{a} )c
	=M(\mathfrak{a} )C_g f
	=C_g (\mathfrak{a}  ^w f)
	= C_g h
	=c_0 = Pc_0 .
\end{align*}
This implies (ii).

Conversely  assume that (i) and (ii) hold. Using
\eqref{eq:CoefficientOperator2} (with $\gamma = g$)  and (i),   we obtain for all $f \in M^p(\rd)$
\begin{align*}
	\|\mathfrak{a}  ^w f\|_{M^p}
	\asymp \|C_g (\mathfrak{a}  ^w f)\|_p
	= \|M(\mathfrak{a} ) C_g f\|_p
	\gtrsim \|C_g f\|_{p}
	\asymp \|f\|_{M^p}.
\end{align*}
Hence $\mathfrak{a}  ^w $ is one-to-one on $M^p(\rd )$. 
To prove that $\mathfrak{a}  ^w $ is surjective we choose an arbitrary
$h \in M^p(\rd)$ and let $c_0:=C_g h \in
\ell^p(\Lambda)$. Then  
$c_0=Pc_0 \in P \ell^p(\Lambda)$. 

By  assumption (ii) there is a $c \in P \ell^p(\Lambda)$ such that 
\begin{equation}\label{eq:M_SigmaIstCg}
	M(\mathfrak{a} ) Pc = Pc_{0} =c_0=C_g h. 
\end{equation}
Since the image of $P$ is $C_g(M^p)$, there is a $f\in M^p(\rd)$ with
\begin{align*}
	Pc=C_g f.
\end{align*}
A combination of \eqref{GaborExpansionOfFunction}, 
\eqref{eq:CoefficientOperator2}
and \eqref{eq:M_SigmaIstCg}  yields 
\begin{align*}
	\mathfrak{a}  ^w f
	= D_{g}C_g(\mathfrak{a}  ^w f) 
	= D_{g} M(\mathfrak{a} ) Pc
	= D_{g} C_g h
	=h.
\end{align*}
This implies that $\mathfrak{a}  ^w$ maps  onto $M^p(\rd )$  and hence invertible on $M^p(\rd)$.
\end{proof}
In the previous lemma we proved the equivalence of the invertibility of $\mathfrak{a}  ^w$ on 
$M^p(\rd)$
and of $M(\mathfrak{a} )$ on 
$P \ell^p(\Lambda)$.  Since $\mathrm{ker}\, P \neq \{0\}$ and
$M(\mathfrak{a}) = M(\mathfrak{a})P$, $M(\mathfrak{a}) $ cannot be
invertible on the whole space $\ell ^p(\Lambda )$.
In the literature this problem is usually overcome by using the
pseudo-inverse of $M(\mathfrak{a})$ and holomorphic functional
calculus. Here  we use a new trick, which may be of independent
interest.  Consider the matrix $A =
M(\mathfrak{a}) + \mathrm{Id} - P$. 
We can then  use the spectral invariance result for infinite 
convolution-dominated matrices of  Theorem \ref{Thm:SpectralInvarianceMatrix_new}
to derive a 
spectral invariance result for pseudodifferential operators 
on \modsp s, cf. Theorem \ref{tm:SpectralInvarianceModSpaces}.
\begin{proof}[Proof of Theorem \ref{tm:SpectralInvarianceModSpaces}]
Let $p \in [p_0,\infty]$ be the index  for which $\mathfrak{a}
^w$ is invertible on $M^p(\rd)$ and let $A=M(\mathfrak{a}) +
\mathrm{Id} - P$, where $P$ is the projection defined in
\eqref{xy}.
First we check the assumptions of Theorem 
\ref{Thm:SpectralInvarianceMatrix_new} and prove that 
\begin{align}\label{eq:AInvertible}
 A=M(\mathfrak{a}  )+I-P \qquad \text{ is invertible on  } \ell^p(\Lambda).
\end{align}
Assume that  
$Ac=0$ for some $c \in \ell^p(\Lambda)$. Then by i) and ii) of Lemma \ref{lemma:PropertiesProjection}
 \begin{align*}
 	0=Ac=\left( M(\mathfrak{a}  )+I-P \right) \left( Pc+(I-P)c \right)=M(\mathfrak{a}  )Pc+(I-P)c.
 \end{align*} 
 If we apply $P$ respectively $I-P$ to the previous equality and use Lemma \ref{lemma:PropertiesProjection} again, we obtain 
 \begin{align*}
 M(\mathfrak{a}  )Pc=0 \qquad \text{and} \qquad (I-P)c=0. 
 \end{align*}
 Since $\mathfrak{a}  ^w$ is invertible on $M^p$ we obtain by  Lemma \ref{lemma:equivalenceInvertibility} and the previous estimate
 \begin{align*}
 	\|Pc\|_p \lesssim \|M(\mathfrak{a} )Pc\|_p=0,
 \end{align*}
 and consequently $Pc=0$. Hence $c=Pc+(I-P)c=0$ which shows  that $A$
 is one-to-one.
 
 To show the surjectivity of $A$, we let $c_0 \in \ell^p(\Lambda)$ be arbitrary.
 Since  $\mathfrak{a}  ^w$ is invertible on $M^p$,   Lemma
 \ref{lemma:equivalenceInvertibility} yields  the existence of $c \in P \ell^p(\Lambda)$ with 
 \begin{align}\label{eq:MPc}
 	M(\mathfrak{a} )Pc=Pc_0.
 \end{align}
 Then 
 $\widetilde{c}=Pc+(I-P)c_0 \in \ell^p(\Lambda)$ and by  Lemma \ref{lemma:PropertiesProjection} and \eqref{eq:MPc}
 \begin{align*}
 	 A \widetilde{c}=
 	 M(\mathfrak{a}  )\widetilde c+(I-P)\widetilde c
=
M(\mathfrak{a}  )Pc+(I-P)c_0=Pc_0+(I-P)c_0 =c_0.
 \end{align*}
 Thus $A$ is onto on $\ell^p(\Lambda)$ and   therefore invertible on
 $\ell ^p(\Lambda )$. 
 Due to Lemma \ref{lemma:PropertiesProjection} we have $A \in 
 \mathcal{C}^{p_0}(\Lambda)$. Since \eqref{eq:AInvertible} also holds we can apply Theorem 
 \ref{Thm:SpectralInvarianceMatrix_new}  and get the invertibility of  $A$  on $\ell^q(\Lambda)$ for all $q \in [p_0,\infty)$. \\
 
 Next we show, that 
 \begin{align}\label{eq:I}
 	M(\mathfrak{a} ) \text{ is invertible on } P\ell^q(\Lambda) \text{ for all } q\in [p_0, \infty). 
 \end{align}
 Let $q \in  [p_0,\infty)$ be arbitrary.  Since $I-P \equiv 0$ on $P \ell^q(\Lambda)$, 
 the injectivity of $A$ implies that $M(\mathfrak{a} )$ is one-to-one on $P\ell^q(\Lambda)$. 
 For an arbitrary $c_0 \in P \ell^q(\Lambda) \subseteq \ell	^q(\Lambda)$, 
 there is some $c \in \ell^q(\Lambda)$ with $Ac=c_0$, 
 since $A$ is onto on $\ell^q(\Lambda)$. Since $(I-P)M(\mathfrak{a}) =
 0$  by Lemma \ref{lemma:PropertiesProjection}, we obtain, after
 applying  $I-P$ to $Ac=c_0$, that  $0=(I-P)c_0=(I-P)Ac=(I-P)c$. Then $c \in P\ell^q(\Lambda)$ and
 \begin{align*}
 	M(\mathfrak{a} )c=M(\mathfrak{a} )c + (I-P)c =Ac=c_0.
 \end{align*}
 Hence $M(\mathfrak{a} )$ is onto  $P \ell^q(\Lambda)$ and \eqref{eq:I} holds. 
By   Lemma \ref{lemma:equivalenceInvertibility} $\mathfrak{a}$ is
invertible on $M^q$. 
\end{proof}

We now prove Theorem~\ref{tm:SpectralInvarianceNew} and obtain more
refined information about the inverse $(\mathfrak{a}^w)^{-1}$. 

\begin{proof}[Proof of Theorem \ref{tm:SpectralInvarianceNew}]
By Theorem \ref{tm:SpectralInvarianceModSpaces} we get the invertibility of $\mathfrak{a} ^w$ on $M^2(\rd)=L^2(\rd)$. 
 	By  \cite[Theorem 4.6]{Gro06} and the embedding
        $M^{\infty,p_0}(\mathbb{R}^{2d}) \subseteq
        M^{\infty,1}(\mathbb{R}^{2d})$,  there is a symbol $\mathfrak{b} \in M^{\infty, 1}(\Rtdst)$ with 
 	$\mathfrak{b} ^w = (\mathfrak{a} ^w)\inv $. 
We consider the associated matrices $M(\mathfrak{a})$ and
$M(\mathfrak{b})$ with respect to a tight Gabor frame $\cG (g,\Lambda
)$ with $g\in \cS (\rd )$ and again denote by $P$ the projection with
entries $P_{\lambda \mu } = \langle \pi (\mu )g,\pi (\lambda )g
\rangle$. 
On account of Lemma \ref{lemma:PropertiesProjection} we get for  all $c \in \text{ran }C_g$ the existence of an $f\in M^2(\rd)$ with 
$c=C_g f=Pc$.
Then for 
	 all $c = C_g f = Pc\in \text{ran }C_g$ we  obtain  using \eqref{eq:CoefficientOperator2}
 	\begin{align*}
M(\mathfrak{b})M(\mathfrak{a})c = M(\mathfrak{b})M(\mathfrak{a}) C_g
          f= M(\mathfrak{b}) C_g(\mathfrak{a}^w f)=
          C_g(\mathfrak{b}^{w} \mathfrak{a}^w  f) =C_gf = c \, .
 	\end{align*}
If $Pc = 0$, then $M(\mathfrak{a})c = M(\mathfrak{a})Pc = 0$,
consequently on $\ell ^2(\Lambda )$ we have  
 $$
        M(\mathfrak{b})M(\mathfrak{a})   = P \, .
 $$
It follows that
$$
\big(M(\mathfrak{b}) + \mathrm{Id}-P\big) \big(M(\mathfrak{a}) +
\mathrm{Id} -P\big) = \big(M(\mathfrak{b}) + \mathrm{Id}-P\big) \, A =  \mathrm{Id} \, .
$$
This means that $B = M(\mathfrak{b}) + \mathrm{Id}-P$ is the inverse
of the invertible matrix $A $ (since
the inverse is unique). 
Since $A \in \cC ^{p_0}$ by Lemma~\ref{lemma:PropertiesProjection},
Theorem~\ref{tm:SpectralInvariance}  implies that also $B \in \cC ^{p_0}$.

Consequently $M(\mathfrak{b})\in \cC ^{p_0}$. Now the characterization
of Corollary~\ref{cor:PropertyMa} implies that $\mathfrak{b} \in M^{\infty ,p_0}(\rdd
)$, as claimed. 
\end{proof}

By using Theorem~\ref{tm:SpectralInvarianceNew} we can now deduce the
invertibility of $\mathfrak{a}$ on more general modulation spaces which generalizes Theorem \ref{tm:SpectralInvarianceModSpaces}.

\begin{tm}[Spectral invariance on Modulation Spaces] \label{thm:SprectralInvariance} 
	If  $\mathfrak{a} \in M^{\infty,p_0 }(\rdd )$ for $p_0\in (0,1]$ and  $\mathfrak{a}   ^w$ is invertible on 
	%
  $M^{p}(\rd)$ for some $p \in [p_0,\infty]$,
	then $\mathfrak{a}^w $ is also invertible on  $M^{r,q}(\rd)$
        for all  $r,q \in [p_0,\infty)$.
\end{tm}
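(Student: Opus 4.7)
The plan is to reduce the claim to an immediate consequence of Theorem~\ref{tm:SpectralInvarianceNew} combined with the mapping properties of Proposition~\ref{Prop:EmbeddingsPDO2} and a density argument.

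First, I would invoke Theorem~\ref{tm:SpectralInvarianceNew}: since $\mathfrak{a}\in M^{\infty,p_0}(\rdd)$ and $\mathfrak{a}^w$ is invertible on $M^p(\rd)$ for some $p\in[p_0,\infty]$, there exists $\mathfrak{b}\in M^{\infty,p_0}(\rdd)$ with $\mathfrak{b}^w=(\mathfrak{a}^w)^{-1}$ as bounded operators on $M^p(\rd)$. Next, Proposition~\ref{Prop:EmbeddingsPDO2} (applied separately to $\mathfrak{a}$ and to $\mathfrak{b}$, both of which lie in $M^{\infty,p_0}$) yields that both $\mathfrak{a}^w$ and $\mathfrak{b}^w$ extend to bounded operators on $M^{r,q}(\rd)$ for every $r,q\in[p_0,\infty]$. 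Consequently the two compositions $\mathfrak{b}^w\mathfrak{a}^w$ and $\mathfrak{a}^w\mathfrak{b}^w$ are bounded operators on $M^{r,q}(\rd)$ for all such $r,q$.

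Next I would verify the two identities $\mathfrak{b}^w\mathfrak{a}^w=\mathrm{Id}=\mathfrak{a}^w\mathfrak{b}^w$ on a common dense subspace. By Proposition~\ref{Prop:EmbeddingsModulationSpaces}, $\cS(\rd)$ is continuously embedded in every modulation space $M^{s,t}(\rd)$ with $s,t\in(0,\infty]$, so in particular $\cS(\rd)\subseteq M^p(\rd)\cap M^{r,q}(\rd)$. Since the two compositions coincide with the identity on $M^p(\rd)$ by the first step, and $\cS(\rd)\subseteq M^p(\rd)$, they agree with the identity on $\cS(\rd)$. Moreover, $\cS(\rd)$ is dense in $M^{r,q}(\rd)$ whenever $r,q<\infty$ (recalled in the excerpt, cf.\ \cite[Remark 14]{GaS04}), so by the boundedness obtained in the previous paragraph the identities extend to all of $M^{r,q}(\rd)$. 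This shows that $\mathfrak{a}^w$ is invertible on $M^{r,q}(\rd)$ with inverse $\mathfrak{b}^w$, proving the theorem.

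There is no real obstacle here; all the work is already done in Theorem~\ref{tm:SpectralInvarianceNew}, and the only thing to be careful about is that we are using the existence of an inverse \emph{symbol} $\mathfrak{b}\in M^{\infty,p_0}$ rather than merely some abstract inverse operator, since only the former automatically gives rise to a bounded Weyl operator on every $M^{r,q}$ via Proposition~\ref{Prop:EmbeddingsPDO2}. The mildest point to double-check is that the density-and-continuity argument applies to $r$ or $q$ at the boundary value $\infty$; but the statement only claims invertibility on $M^{r,q}(\rd)$ for $r,q\in[p_0,\infty)$, which is exactly the range where $\cS(\rd)$ is dense, so the argument closes.
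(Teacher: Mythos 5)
Your proposal is correct and follows essentially the same route as the paper: invoke Theorem~\ref{tm:SpectralInvarianceNew} to get $\mathfrak{b}\in M^{\infty,p_0}(\rdd)$ with $\mathfrak{b}^w=(\mathfrak{a}^w)^{-1}$, use Proposition~\ref{Prop:EmbeddingsPDO2} for boundedness on $M^{r,q}(\rd)$, and extend the identities $\mathfrak{b}^w\mathfrak{a}^w=\mathfrak{a}^w\mathfrak{b}^w=\mathrm{Id}$ from the dense subspace $\cS(\rd)$. Your extra remarks on why density requires $r,q<\infty$ only make explicit what the paper leaves implicit.
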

\begin{proof}
	On account of Theorem \ref{tm:SpectralInvarianceNew} there is
        a $\mathfrak{b}  \in M^{\infty,p_0 }(\rdd )$ with
        $\mathfrak{b}  ^w = (\mathfrak{a} ^w)\inv $ on $M^p(\rd)$. By
        Proposition \ref{Prop:EmbeddingsPDO2}  $\mathfrak{b} ^{w}$ is
        bounded  on $M^{p,q}(\rd)$. Since
        $\mathfrak{b} ^{w}\mathfrak{a}^w  =\mathfrak{a}^w \mathfrak{b}
        ^{w}=I$ on $\s \subseteq M^p(\rd)$ we obtain the invertibility
        of $\mathfrak{a}^w $ on $M^{p,q}(\rd)$ by the density of $\s$
        in  $M^{p,q}(\rd)$. 
\end{proof}

This theorem is an extension of
\cite[Corollary 4.7]{Gro06} from
the case $p_0=1$ to $p_0<1$. By using
the arguments of ~\cite{GR08},
one can formulate the corollary for an even more 
general class of \modsp s. 

\par

\begin{rem}
Let $A\in \mathbb R^{d\times d}$.
Proposition \ref{Prop:CalcTransfer}
implies  that the conclusions in Theorems 
\ref{tm:SpectralInvarianceNew},
\ref{tm:SpectralInvarianceModSpaces}
and Theorem
\ref{thm:SprectralInvariance}
remain true with $Op _A(\mathfrak a)$
and $Op _A(\mathfrak b)$ in place of
$\mathfrak a ^w$ and $\mathfrak b^w$,
respectively, at each occurrence.
\end{rem}

\par

As an application of Theorem  \ref{thm:SprectralInvariance} we show
the following property of the canonical dual window of an Gabor
frame. 
\begin{tm}
    Let $0<p\leq 1$, $\Lambda \subseteq \Rtdst$ be
    a lattice and $g\in M^p(\rd)$ be such that
    $\mathcal{G}(g,\Lambda)$ is a Gabor frame
    for $L^2(\rd)$.
    Then the canonical dual window 
    \begin{align}\label{tm:DualWindow}
    	\gamma= S_{g, \Lambda}^{-1} g \in M^p(\rd).
    \end{align}
\end{tm}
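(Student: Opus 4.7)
The strategy is to realize $S_{g,\Lambda}$ as a pseudodifferential operator with a symbol in an appropriate quasi-Banach modulation-space class, and then apply the spectral invariance result just established (Theorem \ref{thm:SprectralInvariance}) to conclude that $S_{g,\Lambda}^{-1}$ preserves $M^p(\rd)$.

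First I would invoke Proposition \ref{Prop:FrameOpPseudoDiff} with $p_0 = p$ and $g_1 = g_2 = g \in M^p(\rd)$ (and $A = \tfrac12 I$) to obtain a symbol $\mathfrak{a} \in M^{\infty,p}(\rdd)$ such that $S_{g,\Lambda} = \mathfrak{a}^w$. Next, since $\mathcal{G}(g,\Lambda)$ is a Gabor frame for $L^2(\rd)$, the frame operator $S_{g,\Lambda} = \mathfrak{a}^w$ is invertible on $L^2(\rd) = M^2(\rd)$, and $2 \in [p,\infty]$. Theorem \ref{thm:SprectralInvariance} (applied with $p_0 = p$) then yields the invertibility of $\mathfrak{a}^w$ on $M^{r,q}(\rd)$ for every $r,q \in [p,\infty)$; in particular, taking $r = q = p$, the operator $\mathfrak{a}^w = S_{g,\Lambda}$ is invertible on $M^p(\rd)$.

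To finish, I need to identify the inverse of $\mathfrak{a}^w$ on $M^p(\rd)$ with the Hilbert space inverse $S_{g,\Lambda}^{-1}$. By Theorem \ref{tm:SpectralInvarianceNew} there exists $\mathfrak{b} \in M^{\infty,p}(\rdd)$ with $\mathfrak{b}^w = (\mathfrak{a}^w)^{-1}$ on $M^p(\rd)$, and in view of Proposition \ref{Prop:EmbeddingsPDO2} the operator $\mathfrak{b}^w$ is also bounded on $M^2(\rd) = L^2(\rd)$. Since $M^p(\rd) \hookrightarrow L^2(\rd)$ by Proposition \ref{Prop:EmbeddingsModulationSpaces} (and $\s$ is dense in $M^p(\rd)$), the identity $\mathfrak{b}^w \mathfrak{a}^w = \mathrm{Id}$ first established on $M^p$ extends by continuity/density to the $L^2$-setting, so $\mathfrak{b}^w$ coincides with $S_{g,\Lambda}^{-1}$ on $M^p(\rd)$. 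Because $g \in M^p(\rd)$, we conclude
\begin{equation*}
    \gamma = S_{g,\Lambda}^{-1} g = \mathfrak{b}^w g \in M^p(\rd),
\end{equation*}
which proves \eqref{tm:DualWindow}.

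The only delicate point is verifying the hypothesis $\mathfrak{a} \in M^{\infty,p}(\rdd)$: this is precisely where Proposition \ref{Prop:FrameOpPseudoDiff} is essential, as it requires only $M^p$-regularity of the window and in return provides a symbol in the quasi-Banach Sj\"ostrand-type class, which is exactly the setting covered by our spectral invariance theorem. Once this identification is in place, everything else is a straightforward application of the previously established results.
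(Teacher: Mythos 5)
Your proposal is correct and follows essentially the same route as the paper: represent $S_{g,\Lambda}$ as a pseudodifferential operator with symbol in $M^{\infty,p}(\rdd)$ via Proposition \ref{Prop:FrameOpPseudoDiff}, use invertibility on $L^2(\rd)=M^2(\rd)$ and the spectral invariance Theorem \ref{thm:SprectralInvariance} to get invertibility on $M^p(\rd)$, and conclude $\gamma=S_{g,\Lambda}^{-1}g\in M^p(\rd)$. The only cosmetic differences are that the paper works with the Kohn--Nirenberg symbol (invoking the remark on Proposition \ref{Prop:CalcTransfer}) while you use the Weyl quantization directly, and you spell out the identification of the $M^p$-inverse with the $L^2$-inverse, a point the paper leaves implicit.
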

%
\begin{proof}
We denote the Kohn-Nirenberg symbol  of the frame operator $S_{g, \Lambda}$ by $\mathfrak{a}$.
By  Proposition \ref{Prop:FrameOpPseudoDiff} we have
\begin{align*}
	\mathfrak{a} \in M^{\infty, p }(\rdd).
\end{align*}
Since Theorem \ref{thm:SprectralInvariance} also holds for
pseudodifferential operators in the Kohn-Nirenberg quantization, 
$S_{g, \Lambda}$ is invertible on $M^p(\rd)$. 
Therefore \eqref{tm:DualWindow} holds. 
\end{proof}

\appendix

\section{Proofs of some preparatory results}
\label{App:A}

In this appendix we prove some preparatory
results from Section \ref{sec:Prelim} and  \ref{sec:SpectralInvariance}.

\begin{proof}[Proof of Lemma
\ref{ConnWienAmalgSpaceSequenceSpace2}]
Let $\alpha>0$ be chosen such that 
$[0,\alpha]^d\subseteq B_{1/2}(0)$ and 
$(B_{1/2}(\alpha k))_{k \in \zd}$ covers
$\rd$. Then
\begin{align*}
\|H\|_{\ell^{p_0}(\Lambda)}^{p_0}
\leq
\sum_{k \in \zd} \sum_{\Lambda \cap B_{1/2}
(\alpha k )} |H(\lambda)|^{p_0}
\lesssim
\sum_{k \in \zd} \rel(\Lambda) 
\int_{[0,\alpha]^d}
\sup_{\lambda \in B_{1/2}(\alpha k )} 
|H(\lambda)|^{p_0} dy.
\end{align*}
Since $B_{1/2}(\alpha k) \subseteq B_1(\alpha k + 
y)$ for each $y \in [0,\alpha]^d$ the integral
just becomes larger, if we take the supremum over 
all $\lambda \in  B_1(\alpha k + y)$ instead of 
$\lambda \in B_{1/2}(\alpha k)$. A substitution 
then provides 
\begin{align*}
\|H\|_{\ell^{p_0}(\Lambda)}^{p_0} 
\lesssim
\sum_{k \in \zd} \rel(\Lambda) 
\int_{[0,\alpha]^d+\alpha k}\sup_{\lambda \in 
B_{1}(x)} |H(\lambda)|^{p_0} \, dx
=
\rel(\Lambda) \|H\|_{W(C_b,L^{p_0})}^{p_0}.
\end{align*}
\end{proof}

\begin{proof}[Proof of Proposition 
\ref{Prop:ConvDomOpCont}]
For $\rho _0\in \Pi$ and
$\lambda _0\in \Lambda$ fixed let
$$
s_{\Lambda ,p_0}(\rho _0)^{p_0}
:=
\sum _{\lambda \in \Lambda}H(\lambda -\rho _0)^{p_0}
\quad \text{and}\quad
s_{\Pi ,p_0}(\lambda _0)^{p_0}
:=
\sum _{\rho \in \Pi}H(\lambda _0-\rho )^{p_0}.
$$
Then it follows by straightforward
estimates that
$$
s_{\Lambda ,p_0}(\rho )
\le
C_\Lambda
\nm H{W(C_b,L^{p_0})},
\quad
s_{\Pi ,p_0}(\lambda )
\le
C_\Pi \nm H{W(C_b,L^{p_0})},
\quad
\lambda \in \Lambda ,\ \rho \in \Pi,
$$
where $C_\Lambda =C_0\operatorname{rel}(\Lambda )^{\frac 1{p_0}}$,
for some constant $C_0>0$ which only depends on
$d$.

\par
 
Suppose $q\ge 1$. Then H{\"o}lder's inequality
together with the facts that $s_{\Lambda ,p_0}$
and $s_{\Pi ,p_0}$ decrease with $p_0$ give
\begin{align*}
\|Ab\|_{\ell^{q}(\Lambda)}^q
&\leq
\sum_{\lambda \in \Lambda}
\left( \sum_{\rho \in \Pi}
(H(\lambda-\rho)^{\frac 1q}|b_{\rho}|)
H(\lambda-\rho)^{\frac 1{q'}}\right)^{q}
\\[1ex]
&\le
\sum_{\lambda \in \Lambda}
\left( \sum_{\rho \in \Pi}
H(\lambda-\rho)|b_{\rho}|^q\right )
\left( \sum_{\rho \in \Pi}
H(\lambda-\rho)\right)^{\frac q{q'}}
=
\sum_{\lambda \in \Lambda}
\left( \sum_{\rho \in \Pi}
H(\lambda-\rho)|b_{\rho}|^q\right )
s_{\Pi ,1}(\lambda )^{\frac q{q'}}
\\[1ex]
&\le
(C_\Pi \nm H{W(C_b,L^{p_0})})^{\frac q{q'}}
\sum_{\rho \in \Pi}
\left(\sum_{\lambda \in \Lambda} H(\lambda-\rho)\right )|b_{\rho}|^q
\\[1ex]
&=
(C_\Pi \nm H{W(C_b,L^{p_0})})^{\frac q{q'}}
\sum_{\rho \in \Pi}
s_{\Lambda ,1}(\rho )|b_{\rho}|^q
\le
C_\Pi ^{\frac q{q'}}\nm H{W(C_b,L^{p_0})}^q
C_\Lambda \nm b{\ell ^q(\Pi )}^q,
\end{align*}
giving the assertion when $q\ge 1$.

\par

If instead $p_0 \le q\le 1$, then
\begin{align*}
\|Ab\|_{\ell^{q}(\Lambda)}^q
&\leq
\sum_{\lambda \in \Lambda}
\left( \sum_{\rho \in \Pi}
H(\lambda-\rho)|b_{\rho}|
\right)^{q}
\le
\sum_{\rho \in \Pi}
\left( 
\sum_{\lambda \in \Lambda}
H(\lambda-\rho)^q\right)|b_{\rho}|^{q}
\\[1ex]
&\le
\sum_{\rho \in \Pi}
s_{\Lambda ,q}(\rho)^q
|b_{\rho}|^{q}
\le
C_\Lambda ^q\nm H{W(C_b,L^{p_0})}^q\sum_{\rho \in \Pi}
|b_{\rho}|^{q}
=
(C_\Lambda \nm H{W(C_b,L^{p_0})}\nm b{\ell ^q(\Pi )})^q,
\end{align*}
giving the result for $q\le 1$.
\end{proof}

\begin{proof}[Proof of Lemma \ref{lemma:Normequivalence1} for $q<1$]
Since $\supp(\varphi) \subseteq B_2(0)$, we get
\begin{align}
\label{eq_sj_covnum}
\eta  := \sup_{\varepsilon>0} \sup_{x \in \rd} \#\set{k \in
  \mathbb{Z}^d}{\varphi^\varepsilon_k(x) \not= 0} = \sup_{\varepsilon>0} \sup_{x
  \in \rd} \# \{k \in 
  \mathbb{Z}^d \cap B_2(\varepsilon x)\}  <\infty.
\end{align}

So we  obtain the following bound  for all $x \in \rd$: 
\begin{align*}
1 \leq \sum_{k\in \zd} \varphi^\varepsilon_k(x)^q
= \sum_{k\in \zd: \varphi^\varepsilon_k(x) \not= 0}
\varphi^\varepsilon_k(x) ^q
\leq \eta  \, \sup_{k \in \zd} \varphi^\varepsilon_k(x)^q \leq \eta .
\end{align*}
Therefore, for all $x \in \rd$, 
\begin{align}
\label{eq_sj_partq}
\frac{1}{\eta} \leq \sup _{k\in \zd} \varphi^\varepsilon_k(x)^q   \leq
 \sum_{k\in \zd} \varphi^\varepsilon_k(x)^q  
\leq \eta \, .
\end{align}

If   $a \in \ell^q(\Pi )$, then 
\begin{align*}
\frac{1}{\eta } \sum_{\rho \in \Pi} \abs{a_\rho}^q
\leq \sum_{\rho \in \Pi} \sum_{k\in \zd} \varphi^\varepsilon_k(\rho)^q \abs{a_\rho}^q
\leq \eta \sum_{\rho \in \Pi} \abs{a_\rho}^q,
\end{align*}
which implies the claim
with constants independent of $\varepsilon$ and $q$.
\end{proof}

 \bibliographystyle{abbrv}
 \bibliography{general,new}

\begin{thebibliography}{10}

\bibitem{ABK2008}
A.~Aldroubi, A.~Baskakov, I.~Krishtal
\newblock Slanted matrices, {B}anach frames, and sampling.
\newblock {\em J. Funct. Anal.}, 255(7):1667--1691, 2008.


\bibitem{AFK14}
B.~Ascensi, H.~G.~Feichtinger, N.~Kaiblinger 
\newblock Dilation of the {W}eyl symbol and {B}alian-{L}ow theorem.
\newblock {\em Trans. Amer. Math. Soc.}, 366(7):3865--3880, 2014.



\bibitem{Ba90}
A.~G.~Baskakov.
\newblock Wiener's theorem and asymptotic estimates for elements of inverse matrices.
\newblock {\em Funktsional. Anal. i Prilozhen.}, 24(3):64--65, 1990.


\bibitem{Ba97}
A.~G.~Baskakov.
\newblock Asymptotic estimates for elements of matrices of inverse operators, and harmonic analysis.
\newblock {\em Sibirsk. Mat. Zh.}, 38(1):14--28, 1997.


\bibitem{BaCo22}
F.~Bastianoni, E.~Cordero.
\newblock Characterization of smooth symbol classes by {G}abor matrix decay.
\newblock {\em J. Fourier Anal. Appl.}, 28(1):Paper No. 3, 2022.

\bibitem{beals77}
R.~Beals.
\newblock Characterization of pseudodifferential operators and applications.
\newblock {\em Duke Math. J.}, 44(1):45--57, 1977.



\bibitem{CorNic1}
E. Cordero, F. Nicola.
\newblock 
Sharpness of some properties of Wiener
amalgam and modulation spaces.
\newblock {\em Bull. Aust. Math. Soc.},
80(1):105--116, 2009.


\bibitem{CoGi22}
E.~Cordero, G.~Giacchi.
\newblock 
Quasi-Banach algebras and Wiener properties for pseudodifferential and generalized
metaplectic operators. {\em J. Pseudo-Differ. Oper. Appl.} 14(1) (2023), Paper No. 9. 





\bibitem{Da90}
I.~Daubechies.
\newblock The wavelet transform, time-frequency localization and signal analysis.
\newblock {\em IEEE Trans. Inform. Theory}, 36(5):961--1005, 1990.




\bibitem{DaSo22_2}
P.~A.~Dabhi, K.~B.~Solanki.
\newblock 
Vector valued Beurling algebra analogues of Wiener's Theorem.
\newblock {\em doi = {10.48550/ARXIV.2210.04444}},
url = {https://arxiv.org/abs/2210.04444}, 2022.



\bibitem{devoretemlyakov96}
R.~A. DeVore and V.~N. Temlyakov.
\newblock Some remarks on greedy algorithms.
\newblock {\em Adv. Comput. Math.}, 5(2-3):173--187, 1996.


\bibitem{Fe83}
H.~G. Feichtinger.
\newblock Modulation spaces on locally compact abelian groups.
\newblock In {\em Proceedings of ``International Conference on Wavelets and
  Applications" 2002}, pages 99--140, Chennai, India, 2003.
\newblock Updated version of a technical report, University of Vienna, 1983.




\bibitem{FGS04}
H.~G.~Feichtinger, K.~Gr\"ochenig.
\newblock Gabor frames and time-frequency analysis of distributions.
\newblock {\em J. Funct. Anal.}, 146(2):464--495, 1997.



 \bibitem{FeGaTo2014}
 C.~Fern\'{a}ndez, A.~Galbis, J.~Toft.
 \newblock Spectral properties for matrix algebras.
 \newblock {\em Adv. Math.}, 20(2):362--383, 2014.


\bibitem{FoGr05}
M.~Fornasier, K.~Gr\"ochenig.
\newblock Intrinsic localization of frames.
\newblock {\em Constr. Approx.}, 22(3):395--415, 2005.


\bibitem{GaGr02}
Y.~V.~Galperin, K.~Gr\"ochenig.
\newblock Uncertainty principles as embeddings of modulation spaces.
\newblock {\em J. Math. Anal. Appl.}, 274(1):181--202, 2002.



\bibitem{GaS04}
Y.~V.~Galperin, S.~Samarah.
\newblock Time-frequency analysis on modulation spaces {$M^{p,q}_m$},  {$0<p,\ q\leq\infty$}.
\newblock {\em Appl. Comput. Harmon. Anal}, 16(1):1--18, 2004.


\bibitem{Gro01}
K.~Gr\"ochenig.
\newblock Foundations of time-frequency analysis.
\newblock {\em Birkh\"{a}user Boston}, 2001.


\bibitem{Gro06}
K.~Gr\"ochenig.
\newblock Time-frequency analysis of {S}j\"ostrand's class.
\newblock {\em Rev. Mat. Iberoam.}, 22(2):703--724, 2006.



\bibitem{Gro10}
K.~Gr\"ochenig.
\newblock Wiener’s lemma: theme and variations, an introduction tospectral invariance and its applications.
\newblock {\em In Four Short Courses on Harmonic Analysis: Wavelets, Frames, Time-Frequency Methods, and Applications to Signal and Image Analysis}, edited by B.~Forster and P.~Massopust,
Birkh\"auser, Boston, 2010.


\bibitem{GrHe99}
K.~Gr\"ochenig and C.~Heil.
\newblock Modulation spaces and pseudodifferential operators.
\newblock {\em Integral Equations Operator Theory}, 34(4):439--457, 1999.


\bibitem{GOR15}
K.~Gr{\"o}chenig, J.~Ortega-Cerd{\`a}, and J.~L. Romero.
\newblock Deformation of {G}abor systems.
\newblock {\em Adv. Math.}, 277:388--425, 2015.

\bibitem{GRV20}
K.~Gr\"{o}chenig, J.~L. Romero, D.~Rottensteiner, and J.~T. Van~Velthoven.
\newblock Balian-{L}ow {T}ype {T}heorems on {H}omogeneous {G}roups.
\newblock {\em Anal. Math.}, 46(3):483--515, 2020.

\bibitem{GR08}
K.~Gr{\"o}chenig and Z.~Rzeszotnik.
\newblock Banach algebras of pseudodifferential operators and their almost
  diagonalization.
\newblock {\em Ann. Inst. Fourier (Grenoble)}, 58(7):2279--2314, 2008.

\bibitem{GS20}
K.~Gr\"{o}chenig, S.~Samarah.
\newblock Nonlinear approximation with local {F}ourier bases.
\newblock {\em Constr. Approx.}, 16(3):317--331, 2000.

\bibitem{GS07}
K.~Gr{\"o}chenig and T.~Strohmer.
\newblock Pseudodifferential operators on locally compact abelian groups and
  {S}j\"ostrand's symbol class.
\newblock {\em J. Reine Angew. Math.}, 613:121--146, 2007.


\bibitem{GZ21}
K.~Gr\"{o}chenig, G.~Zimmermann.
\newblock Hardy's theorem and the short-time {F}ourier transform of {S}chwartz functions.
\newblock {\em J. London Math. Soc. (2)}, 63(1):205--214, 2001.


\bibitem{He03}
C.~Heil.
\newblock An introduction to weighted Wiener amalgams.
\newblock {\em in: Proc. Conf. Madras (January 2002), M. Krishna, R. Ramakrishnan and S. Thangavelu, eds.}, 183--216, 2003.


\bibitem{Hoe83}
L.~{H}\"{o}rmander.
\newblock {T}he analysis of linear partial differential operators, Vols I-III.
\newblock {\em Springer-Verlag, Berlin}, 1983, 1985.


\bibitem{Jaffard1990}
S.~Jaffard.
\newblock Propri\'{e}t\'{e}s des matrices ``bien localis\'{e}es'' pr\`es de leur
              diagonale et quelques applications.
\newblock {\em Ann. Inst. H. Poincar\'{e} C Anal. Non Lin\'{e}aire}, 7(5):461--476, 1990.



\bibitem{Ja95}
A.~J.~E.~M.~Janssen.
\newblock Duality and biorthogonality for {W}eyl-{H}eisenberg frames.
\newblock {\em J. Fourier Anal. Appl.}, 1(4):403--436, 1995.



\bibitem{LeopoldTriebel}
H.~G.~{Leopold} and H.~{Triebel}.
\newblock {\em Spectral invariance for pseudodifferential operators on weighted function spaces}.
\newblock {\em Manuscr. Math.}, 83:315--325, 1994.


\bibitem{Kob06}
M.~Kobayashi.
\newblock Modulation spaces {$M^{p,q}$} for {$0<p,q\leq\infty$}.
\newblock {\em J. Funct. Spaces Appl.}, 4(3):329--341, 2006.


\bibitem{MoSu17}
N.~Motee, Q.~Sun.
\newblock Sparsity and spatial localization measures for spatially
              distributed systems.
\newblock {\em SIAM J. Control Optim.}, 55(1):200--235, 2017.




\bibitem{Te06}
Nenad.~Teofanov.
\newblock Modulation spaces, {G}elfand-{S}hilov spaces and pseudodifferential operators.
\newblock {\em Sampl. Theory Signal Image Process.}, 5(2):225--242, 2006.


%




\bibitem{Schrohe1990}
E.~{Schrohe}.
\newblock {\em Boundedness and spectral invariance for standard pseudodifferential operators on anisotropically weighted $L\sp p$-Sobolev spaces}.
\newblock {\em Integral Equations Oper. Theory},13:271--284 1990.


\bibitem{Shin2016}
C.~E.~{Shin}.
\newblock {\em Wiener's lemma for infinite matrices of {G}ohberg-{B}askakov-{S}j\"{o}strand class}.
\newblock {\em Bull. Korean Math. Soc.},53(2):541--550, 2016.




\bibitem{Sjo95}
J.~Sj{\"o}strand.
\newblock Wiener type algebras of pseudodifferential operators.
\newblock In {\em S\'eminaire sur les \'Equations aux D\'eriv\'ees Partielles,
  1994--1995}, Exp.\ No.\ IV, 21 pp. \'Ecole Polytech., Palaiseau, 1995.



\bibitem{Sun2011}
Q.~{Sun}.
\newblock {\em Wiener's lemma for infinite matrices {II}}.
\newblock {\em Constr. Approx.},34(2):209--235, 2011.  

  
\bibitem{Sun2007}
Q.~{Sun}.
\newblock {\em Wiener's lemma for infinite matrices}.
\newblock {\em Trans. Amer. Math. Soc.},359(7):3099--3123, 2007.    



\bibitem{Sun2005}
Q.~{Sun}.
\newblock {\em Wiener's lemma for infinite matrices with polynomial off-diagonal decay}.
\newblock {\em C. R. Math. Acad. Sci. Paris},340(8):567--570, 2005.  
  
  
  
\bibitem{To01}
J.~Toft.
\newblock Continuity properties for modulation
spaces, with applications to pseudo-differential 
calculus. II.
\newblock {\em Ann. Global Anal. Geom.},
26(1):73--106, 2004.  








\bibitem{To17}
J.~Toft.
\newblock Continuity and compactness for
pseudo-differential operators with symbols
in quasi-{B}anach spaces or {H}\"{o}rmander
classes.
\newblock {\em Anal. Appl. (Singap.)},
15(3):353--389, 2017.  

\bibitem{To17B} J. Toft
\newblock {Matrix parameterized
pseudo-differential calculi on modulation spaces
{\rm {in: M. Oberguggenberger, J. Toft, J. Vindas, P. Wahlberg (eds)}},
Generalized functions and Fourier analysis},
\newblock
Operator Theory: Advances and Applications 260
Birkh{\"a}user, 2017, pp. 215--235.





\bibitem{Wa93}
D.~F.~Walnut.
\newblock Lattice size estimates for {G}abor decompositions.
\newblock {\em Monatsh. Math.}, 115(3):245--256, 1993.  


\end{thebibliography}

\end{document}